\DeclareMathAlphabet{\mathcal}{OMS}{zplm}{m}{n}
\newcommand{\tmod}{\mathrm{mod}}
\newcommand{\R}{\mathbb{R}}
\newcommand{\bC}{\mathbb{C}}
\newcommand{\N}{\mathbb{N}}
\newcommand{\Z}{\mathbb{Z}}
\newcommand{\cH}{\mathcal{H}}
\newcommand{\cL}{\mathcal{L}}
\newcommand{\cG}{\mathcal{G}}
\newcommand{\cT}{\mathcal{T}}
\newcommand{\cI}{\mathcal{I}}
\newcommand{\cY}{\mathcal{Y}}
\newcommand{\sfT}{\mathsf{T}}
\newcommand{\sfN}{\mathsf{N}}
\newcommand{\tN}{\mathrm{N}}
\newcommand{\bfd}{\mathbf{d}}
\newcommand{\bfx}{\mathbf{x}}
\newcommand{\bfy}{\mathbf{y}}
\newcommand{\bfz}{\mathbf{z}}
\newcommand{\bfu}{\mathbf{u}}
\newcommand{\bfv}{\mathbf{v}}
\newcommand{\bfp}{\mathbf{p}}
\newcommand{\bfw}{\mathbf{w}}
\newcommand{\supp}{\mathrm{supp}}
\newcommand{\dimH}{\dim_{\mathrm{H}}}
\newcommand{\lodimA}{\underline\dim_{\mathrm{A}}}
\newcommand{\BAD}{\mathrm{BAD}}
\newcommand{\id}{\mathrm{id}}
\newcommand{\dist}{\mathrm{dist}}
\newcommand{\linte}[1]{{\left\lfloor #1 
		\right\rfloor}}
\newcommand{\uinte}[1]{{\left\lceil #1 
		\right\rceil}}
\theoremstyle{plain}
\newtheorem{theorem}{Theorem}
\newtheorem{lemma}{Lemma}
\newtheorem{proposition}{Proposition}
\newtheorem{corollary}{Corollary}
\newtheorem{claim}{Claim}
\newtheorem*{claim*}{Claim}
\newtheorem*{mah}{{\rm \textbf{Mahler's Problem}}}
\theoremstyle{definition}
\newtheorem{definition}{Definition}
\newtheorem{example}{Example}
\newtheorem*{examples*}{Examples}
\newtheorem*{example*}{Example}
\newtheorem{remark}{Remark}
\newtheorem{question}{Question}
\newtheorem*{notations*}{Notations}
\newtheorem*{notation*}{Notation}
\numberwithin{equation}{section}
\numberwithin{question}{section}
\numberwithin{theorem}{section}
\numberwithin{thm}{section}
\numberwithin{lemma}{section}
\numberwithin{proposition}{section}
\numberwithin{cor}{section}
\numberwithin{corollary}{section}
\numberwithin{claim}{section}
\numberwithin{definition}{section}
\numberwithin{example}{section}
\numberwithin{remark}{section}
\numberwithin{notations}{section}
\numberwithin{notation}{section}
\numberwithin{claim}{section}
\numberwithin{problem}{section}
\numberwithin{figure}{section}
\title{A winning approach to the intersections of twisted non-recurrent sets with fractals}
\author[J. Huang]{Junjie Huang}
\address[J. Huang]{School of Mathematics, South China University of Technology, Wushan Road 381, Tianhe District, Guangzhou, China}
\email{h1135778478@163.com}
\author[B. Li]{Bing Li}
\address[B. Li]{School of Mathematics, South China University of Technology,  Wushan Road 381, Tianhe District, Guangzhou, China}
\email{scbingli@scut.edu.cn}
\author[B. Wang]{Bo Wang}
\address[B. Wang]{School of Mathematics, Sun Yat-sen University, No.135 Xingang West Road, Haizhu District, Guangzhou, China; School of Mathematics, Jiaying University, No. 100 Meisong Road, Meijiang District, Meizhou, China}
\email{math\_bocomeon@163.com}
\author[N. Yuan]{Na Yuan}
\address[N. Yuan]{School of Mathematics, Guangdong University of Education,  Xingang Middle Road 351, Guangzhou,  China}
\email{yuanna199@gmail.com}
\begin{document}
\subjclass[2020]{Primary: 11J83, 11K55; Secondary: 37B55}
\keywords{Hyperplane absolute winning, Self-conformal sets, Twisted non-recurrent sets}
\begin{abstract}
    In this paper, we prove that if $S\subseteq\R^d$ is hyperplane absolute winning on a closed hyperplane diffuse set $L\subseteq\R^d$, then $\dimH S\cap K=\dimH K$ for any irreducible self-conformal set $K\subseteq L$ without assuming any separation condition on $K$.  The result is then applied to obtain the Hausdorff dimension of intersections between irreducible self-conformal sets and twisted non-recurrent sets $\tN(T,\cG)$ defined as
    \begin{equation*}
        \tN(T,\cG):=\left\{\bfx\in[0,1]^d:\liminf_{n\to\infty}\|T^n(\bfx)-g_n(\bfx)\|>0\right\},
    \end{equation*}
    where $T:[0,1]^d\to[0,1]^d$ belongs to a broad class of product maps, $\cG:=\{g_n\}_{n\in\N}$ is a sequence of self-maps on $[0,1]^d$ with uniform Lipschitz constant and $\|\cdot\|$ denotes the maximal norm in $\R^d$. {When $T$ is the $\beta$-transformation on $[0,1]$, it provides a positive answer to a question raised informally by Broderick, Bugeaud, Fishman, Kleinbock and Weiss (Math. Res. Lett., 2010). For the case $T$ is a $d\times d$ diagonal matrix transformations, our results provide a partial answer asked in a paper of Li, Liao, Velani and Zorin (Adv. Math., 2023). A natural generalization to non-autonomous setting is also obtained.}
\end{abstract}
\maketitle 

% \section{Outline}
% \begin{itemize}
%     \item Main question: Let $K\subseteq[0,1]^d$. What is the Hausdorff dimension of $N(T,\cG)\cap K$?\medskip

%     \item Mahler question: from the viewpoint of Badly approximable numbers in fractals.
%     \medskip

%     \item $BAD=N(T,0)$, where $T:[0,1]\to[0,1]$ is a Gauss map.
%     \medskip

%     \item How to introduce the non-autonomous dynamics?
% \end{itemize}

\section{Introduction}

\subsection{Background and motivation} 
In 1984, Mahler \cite{MR732177} posed a number of fundamental problems, among which the following has played a pivotal role in  motivating subsequent research in Diophantine approximation on fractals:
\begin{mah}
How close can irrational elements of Cantor's set be approximated by rational numbers
    \begin{itemize}
        \item[(i)] in Cantor's set, and
        \medskip
        \item[(ii)] by rational numbers not in Cantor's set?
    \end{itemize}
\end{mah}
\noindent Up to now, the problem has attracted considerable attention from numerous researchers and led to a great deal of advances in the study of distribution of rational numbers inside or near fractal sets; see for example \cite{MR4525557,MR4519189,MR4705765,MR4905584,benard2024,benard2025,MR3500835,chow2024,MR3237740,MR3350721,MR4574663,MR2295506,MR4875365,MR4240610,MR4670093,MR3673046,MR1875309,yu2021} and the references therein.

 A particularly active line of research  inspired by Mahler's Problem concerns the Hausdorff dimension of intersection of the badly approximable set with fractals. To state it precisely,  denote
\begin{equation*}
   \BAD_d:=\left\{\bfx\in[0,1]^d:\exists\, c(\bfx)>0\text{\ such that}\ \left\|\bfx-\frac{\bfp}{q}\right\|>\frac{c(\bfx)}{q^{1+1/d}}\,,\ \forall\ (\bfp,q)\in\Z^d\times\N\right\}.
\end{equation*}
 Here,  $\|\cdot\|$ is the maximal norm in $\R^d$ defined by
\begin{equation}\label{maxnorm}
    \|\bfx\|:=\max_{1\leq i\leq d}|x_i| \qquad \forall\ \bfx=(x_1,x_2,...,x_d)\in\R^d
\end{equation}
and $|x|$ is the absolute value of $x\in\R$.
The elements in $\BAD_d$ are referred to as \emph{badly approximable vectors} and the set itself is called the \emph{badly approximable set}. It is well-known (e.g.,   Chapter 3 in \cite{MR568710}) that  although the set $\BAD_d$ has zero $d$-dimensional Lebesgue measure, its  Hausdorff dimension is full, namely equal to $d$. We refer to  Chapter 2 of \cite{MR2118797} for a comprehensive introduction to the Hausdorff dimension (denoted by $\dimH$). This motivates the natural and extensively studied question of whether $\BAD_d$ has full Hausdorff dimension within various kinds of fractal sets. Over the past decades, this line of inquiry has produced a large number of affirmative results, especially for self-conformal sets and self-affine sets satisfying certain non-concentration assumptions and separation conditions; see \cite{MR2644378,MR2818688,MR3826829,MR3904182,MR2191212,MR2581371} and the references therein. In terms of self-conformal sets, it has been shown in the work of Das, Fishman, Simmons and Urba\'{n}ski \cite{MR3826829} that if $K\subseteq[0,1]^d$ is an irreducible self-conformal set that satisfies the open set condition (OSC),  then  $\dimH \BAD_d\cap K=\dimH K$. Here, we say that a self-conformal set in $\R^d$ is \emph{irreducible} if it is not contained in a $(d-1)$-dimensional real-analytic submanifold of $\R^d$ and we refer to Section \ref{defofselfcon} for detailed information about self-conformal sets.  A natural and meaningful step extending the result is to determine whether the open set condition can be removed. This brings about the following question.
\begin{question}\label{removeosc}
    Let $K\subseteq[0,1]^d$ be an irreducible self-conformal set.  Is it true that $$\dimH\BAD_d\cap K=\dimH K\,?$$
\end{question}
\noindent This paper provides a  positive answer as discussed in Remark \ref{answerremosc} of Section \ref{HYWINN}.  The approach is closely linked to the notion of Schmidt's game and its variants, originating from the influential work of Schmidt \cite{MR195595}.  
% We refer to the discussion below and to Section \ref{HYWINN} for detailed account of Schmidt's game and its significant role in the intersection with fractals.
Here, we recall a key property that if $E\subseteq\R^d$ is $\alpha$-winning  for some $\alpha\in(0,1)$, then $\dimH E=d$; 
 see Section \ref{HYWINN} for detailed information.

  We now introduce a generalization of the badly approximable set. When $d=1$,  it is well known (Proposition 3.10 of \cite{MR2723325}) that
\begin{equation}\label{badtnx}
    \BAD_1=\left\{x\in[0,1]:\liminf_{n\to\infty}T_G^n(x)>0\right\},
\end{equation}
where $T_G:[0,1]\to[0,1]$ is the Gauss map defined by
\begin{eqnarray}\label{defofgauss}
    T_G(x):=\left\{
\begin{aligned}
    &1/x-\linte{1/x},\ \text{if $x\in(0,1]$},\\
    &0,\qquad \quad \ \ \quad \ \,   \text{if $x=0$}
\end{aligned}
    \right.
\end{eqnarray}
and $\linte{x}:=\max\{z\in\Z:z\leq x\}$ for any $x\in\R$.
 This motivates a dynamical analogue of the badly approximable set: for a metric space $(X,\bfd)$ and a map $T:X\to X$, define 
\begin{equation*}
    \BAD(T,y):=\left\{x\in X:\liminf_{n\to\infty}\bfd(T^nx,y)>0\right\}.
\end{equation*}
Roughly speaking, $\BAD(T,y)$ consists of those points whose orbits stay away from some ``avoidance'' ball centered at $y$. Specifically,  $x\in\BAD(T,y)$ if and only if there exists $\delta=\delta(x)>0$ such that $T^nx\notin B(y,\delta)$ for any sufficiently large $n\in\N$. Here, $B(y,r)$ denotes the open ball with radius $r$ centered at $y$.  

 In the case $X=[0,1]$, $T=T_G$ is the Gauss map defined in \eqref{defofgauss} and $y=0$, it follows directly from \eqref{badtnx} that the set $\BAD(T_G,0)$ coincides with $\BAD_1$ and thus $\dimH \BAD(T_G,0)=1$. This result was generalized by Schmidt \cite{MR195595} who  showed that it is in fact $1/2$-winning. Beyond this classical setting, a rich body of results exist concerning the winning property of $\BAD(T,y)$. For one-dimensional systems, Tseng \cite{MR2480100} proved that for any $C^2$ expanding self-map $T$ on circle $S^1$, the set $\BAD(T,y)$ is $1/2$-winning. Besides the smooth maps,  it has been established from multiple sources \cite{MR2739401,MR2660561,MR3206688,MR3021798} that if $T$ is a $\beta$-transformation, Gauss map or L\"{u}roth transformation on the unit interval, then $\BAD(T,y)$ is $1/2$-winning for any $y\in[0,1]$. More recently, Hu, Li and Yu \cite{hu2025} extended this result to a much broader setting by showing that for any piecewise $C^{1+\kappa}$ expanding map $T$ on $[0,1]$  with finite branches (see Definition \ref{defofpiee}), the set $\BAD(T,y)$ is $1/2$- winning for any $y\in[0,1]$. For non-expanding systems, Duvall \cite{MR4157863} showed that if $T$ belongs to a certain kind of nonuniformly expanding maps on $[0,1]$, then $\BAD(T,0)$ is $\alpha$-winning for some $\alpha\in(0,1)$. In terms of systems in high-dimensional space, we refer to \cite{MR4389795,MR3519428} and references therein for corresponding results on partially hyperbolic diffeomorphisms, and to \cite{MR2818688,MR980795} and the works cited there for results regarding  endomorphisms on high-dimensional torus.

From the viewpoint of Mahler's Problem, it is of particular interest in understanding the size, in the sense of Hausdorff dimension, of the set $\BAD(T,y)\cap K$ for various kinds of subsets $K\subseteq X$. Significant progress has been made under the assumption that $T$ is an integer matrix transformation and $K$ is the support of a measure with nice geometric properties. We say that a Borel probability measure $\mu$ on $\R^d$ is \emph{absolutely decaying} if there exist $C>0$, $r_0>0$ and $s>0$ such that
 \begin{equation*}
     \mu\big(B(\bfx,r)\cap(\cL)_{\gamma r}\big)\leq C\cdot\gamma^{s}\cdot\mu(B(\bfx,r))
 \end{equation*}
 for any $\bfx\in\supp(\mu)$, $0<r\leq r_0$, $0<\gamma\leq 1$ and affine hyperplane $\cL$ with dimension $d-1$. Here, $(E)_{\rho}$ denotes the $\rho$-neighborhood of $E$ defined as
  \begin{equation*}
      (E)_{\rho}:=\left\{\bfx\in\R^d:\exists\,\bfy\in E\ \text{such that}\ \|\bfx-\bfy\|<\rho\right\}.
  \end{equation*} Given $s>0$, the measure $\mu$ is called \emph{$s$-Ahlfors regular} if there exist $C>1$ and $r_0>0$ such that
 \begin{equation*}
     C^{-1}r^{s}\leq\mu(B(\bfx,r))\leq Cr^{s}
 \end{equation*}
 for any $\bfx\in\supp(\mu)$ and $0<r\leq r_0$. It is called \emph{Ahlfors regular} if it is $s$-Ahlfors regular for some $s>0$. It was shown in \cite{MR2818688} that if 
\begin{equation}\label{txeqaxmod1}
    T\bfx=A\bfx\ \ \tmod \ 1,\qquad\forall\ \bfx\in[0,1]^d,
\end{equation}
where $A$ is a $d\times d$ integer matrix with an eigenvalue of modulus greater than one,  then $\dimH\BAD(T,\bfy)\cap K=\dimH K$ for any $\bfy\in[0,1]^d$ provided that $K\subseteq[0,1]^d$ is the support of an absolutely decaying and Ahlfors-regular measure.   This result was further extended to sequences $\cY=\{\bfy_n\}_{n\in\N}$ in $[0,1]^d$ via the set  
\begin{equation*}
    \BAD(T,\cY):=\Big\{x\in [0,1]^d:\liminf_{n\to\infty}\|T^n{\bfx}-{\bfy}_n\|>0\Big\}
\end{equation*}
for which the same dimensional conclusion still holds.  Note that while $\mathrm{BAD}(T, y)$ involves a fixed “avoidance center” $y$, the set $\mathrm{BAD}(T, \mathcal{Y})$ allows the centers to vary with $n\in\N$, which introduces additional complexity. However, the above results fail to extract any effective information on the size of $\BAD(T,\mathcal{Y})\cap K$ when $A$ is a general $d\times d$ real matrix. It is even not known whether $\BAD(T,\mathcal{Y})$ has full Hausdorff dimension in this case.
%  In the  one dimensional case, these results apply to the $\times m$ map $T_m(x):=mx\ \, \tmod \ 1$
%  with integer $m\geq2$,  but fail to extend to the $\beta$-transformation $T_{\beta}(x):=\beta x\ \, \tmod \ 1$ 
% for non-integer $\beta>1$. 
The limitation stems from the fact that the proofs for the integer case rely heavily on the algebraic property  $T^n(x)=A^n \bfx\  \tmod \ 1$ for any $n\in\N$ and $\bfx\in[0,1]^d$ when $A$ is an integer matrix, which is absent in the setting of  non-integer matrix transformation. As a result, the following question remains unsolved in the existing literature.
\begin{question}\label{questbeta}
  Let $T:[0,1]^d\to[0,1]^d$ be a real matrix transformation as in \eqref{txeqaxmod1} whose corresponding matrix has an eigenvalue of modulus greater than one and let $\mathcal{Y}=\{\bfy_n\}_{n\in\N}$ be a sequence of points in $[0,1]^d$.
 \begin{itemize}
     \item[(i)] Is it true that $\dimH\BAD(T,\mathcal{Y})=d$?\medskip

    \item[(ii)] Let $K\subseteq[0,1]^d$ be a subset. What can we say about the Hausdorff dimension of the intersection $\BAD(T,\cY)\cap K$?
 \end{itemize}
\end{question}

 Recent progress on Schmidt's game does offer some insights in the one-dimensional case: a combination of    Corollary 5.4 of \cite{MR980795}  and Corollary 1.12 of the recent paper \cite{MR4833656} implies that if $E\subseteq[0,1]$ is $1/2$-winning on $[0,1]$ and $K\subseteq[0,1]$ is the support of an Ahlfors-regular measure, then $\dimH E\cap K=\dimH K$.
  In conjunction with the previously noted  result of Hu, Li and Yu \cite{hu2025}, this implies that the desired dimensional equality $\dimH\BAD(T,y)\cap K=\dimH K$ holds when $K\subseteq[0,1]$ is the support of an Ahlfors regular measure, $T:[0,1]\to[0,1]$ is a piecewise $C^{1+\kappa}$ expanding  map  with finite branches and  $y\in[0,1]$. Such result applies to the $\beta$-transformation $T_{\beta}(x):=\beta x\ \tmod\ 1$ on $[0,1]$ for any $\beta\in\R$ with $|\beta|>1$.  However, the proof  in \cite{hu2025} depends crucially on the fact that the avoidance center $y$ is fixed and does not adapt to the varying centers $\cY$ (see Remark \ref{onedimtherek} (i) for a more detailed discussion). Therefore, the dimension theory of the corresponding intersections in case of general sequences $\mathcal{Y}$ is still not fully understood in the literature.   As outlined above, clearing up the Question \ref{questbeta} is a natural and crucial step towards completing the picture of badly approximable theory in the context of non-integer dynamics.                  

 Next, we incorporate $\BAD(T,\cY)$ within the following more general framework -- the twisted non-recurrent set.
 \begin{definition}\label{defofnnontwi}
    Let $(X,\bfd)$ be a metric space, $T$ be a self-map on $X$ and $\cG:=\{g_n\}_{n\in\N}$ be a sequence of self-maps on $X$. Define the \emph{twisted non-recurrent set $\tN(T,\cG)$ associated with $T$ and $\cG$} as
    \begin{eqnarray*}
        \tN(T,\cG):=\left\{x\in X: \liminf_{n\to\infty}\bfd(T^nx,g_n(x))>0\right\}.
    \end{eqnarray*}
\end{definition}
\noindent  This is more general since $\tN(T,\cG)=\BAD(T,\cY)$ if $g_n(x)\equiv y_n$ for all $n\in\N$. By definition, $x\in\tN(T,\cG)$ if and only if there exists $\delta=\delta(x)>0$ such that $T^nx\notin B(g_n(x),\delta)$ eventually. In contrast to $\BAD(T,y)$ and $\BAD(T,\cY)$ where the centers of ``avoidance'' balls are fixed, the centers $g_n(x)$ here depend explicitly on the point $x\in X$. This intrinsic dependence introduces additional difficulty in analyzing the structure of $\tN(T,\cG)$.

A primary motivation of bringing in the set-up is to encompass a fundamental class of objects: the non-recurrent sets
\begin{eqnarray*}
    \tN(T):=\left\{x\in X:\liminf_{n\to\infty}\bfd(T^nx,x)>0\right\}.
\end{eqnarray*}
The classical Poincar\'{e}'s Recurrence Theorem states that $\mu(\tN(T))=0$ if the metric space $X$ is separable and the Borel probability measure $\mu$ on $X$ is $T$-invariant; that is to say that $\mu(T^{-1}E)=\mu(E)$ for any $\mu$-measurable subset $E\subseteq X$. It is therefore desirable and challenging to determine the Hausdorff dimension of $\tN(T)$. Previous work \cite{nayuan2021} established that $\mathrm{N}(T)$ has full Hausdorff dimension for any expanding Markov map $T:[0,1]\to[0,1]$, covering important examples such as $\beta$-transformations, the Gauss map, and the Lüroth map.  It was then generalized to Markov expanding maps on  subsets $X\subseteq[0,1]$ which are  closures of disjoint union of open intervals \cite{nayuan2021}. Regarding the intersection with fractals, the Theorem 1.3 in \cite{MR2818688} implies that $\dimH\tN(T)\cap K=\dimH K$ if $T:[0,1]^d\to[0,1]^d$ is a matrix transformation as in \eqref{txeqaxmod1}  with respect to a $d\times d$ integer matrix with an eigenvalue of modulus greater than one  and $K\subseteq[0,1]^d$ is the support of an absolutely decay and Ahlfors-regular measure. For the same reason demonstrated earlier, however, these results rely heavily on the algebraic structure of integer matrix transformations and do not extend to general real matrices. Even in the one-dimensional setting, while the desired dimensional equality holds for the $\times m$ map $T_m(x):=mx\ \tmod\ 1$ on $[0,1]$ with $m\in\Z$ and $|m|\geq2$, results in the existing works do not tell  whether this continues to hold for $\beta$-transformation $T_{\beta}(x):=\beta x\ \tmod \ 1$ with $\beta\in\R$ and $|\beta|>1$.
% , let alone for general piecewise  expanding maps (see Definition \ref{defofpiee}) and twisted non-recurrent sets. 

The above limitations highlight a substantial gap in the existing intersection theory. In light of this, we formulate the following broad generalization of Question \ref{questbeta} incorporating twisted non-recurrent sets.
\begin{question}\label{quesofrpietwi}
Let $T:[0,1]^d\to[0,1]^d$ be a real matrix transformation as in \eqref{txeqaxmod1} whose corresponding matrix has an eigenvalue of modulus greater than one. Let $\cG=\{g_n\}_{n\in\N}$ be a sequence of self-maps on $[0,1]^d$. 
 \begin{itemize}
     \item[(i)] Is it true that $\dimH\tN(T,\cG)=d$?\medskip

 \item[(ii)] Let $K$ be a subset of $[0,1]^d$. What can we say about the Hausdorff dimension of the intersection $\tN(T,\cG)\cap K$?
 \end{itemize}
\end{question}

  In the next part, we introduce the machinery of Schmidt games and their variants, which will provide a powerful framework for addressing Question \ref{quesofrpietwi}. A partially positive answer to the question will be presented in Section \ref{twishyperplane}.

\subsection{Intersections with fractals: a winning approach}\label{HYWINN}

Throughout, $\R^d$ is equipped with the maximal norm defined in \eqref{maxnorm}. We begin by describing the game introduced by Schmidt \cite{MR195595}. Let $\alpha,\gamma\in(0,1)$ and let $K\subseteq\R^d$ be a closed subset. The \emph{$(\alpha,\gamma)$-game on $K$} is played as follows:  two players, called Alice and Bob, take turns choosing nested closed balls centered in $K$ that satisfy the inclusions
\begin{equation*}
    B_1\supseteq A_1\supseteq B_2\supseteq A_2\supseteq\cdots\supseteq B_k\supseteq A_k\supseteq\cdots
\end{equation*}
and radius conditions
\begin{equation*}
    |A_k|=\alpha\cdot|B_k|,\quad|B_{k+1}|=\gamma\cdot|A_k|,\quad\forall \ k\in\N,
\end{equation*}
where $A_k$ and $B_k$ denote the $k$-th moves of Alice and Bob, respectively, and $|E|$ is the diameter of the set $E$. We say that a set $S\subseteq \R^d$ is \emph{$(\alpha,\gamma)$-winning on $K$} if Alice has a strategy of choosing balls to ensure that $\bigcap_{k=1}^{\infty}B_k\subseteq S$ whatever Bob chooses his balls. It is called \emph{$\alpha$-winning on $K$} if it is $(\alpha,\gamma)$-winning on $K$ for any $\gamma\in(0,1)$. We simply say that $S$ is \emph{winning on $K$} if it is $\alpha$-winning on $K$ for some $\alpha\in(0,1)$.  Some important properties of $\alpha$-winning sets are summarized as follows.
\begin{proposition}[{\cite{MR195595}}]\label{winnpro}
    Let $E$, $E_n$ be subsets of $\R^d$ for any $n\in\N$, let $K\subseteq\R^d$ be a closed subset with non-empty interior and let $\alpha\in(0,1)$.
    \begin{itemize}
        \item[(i)] If $E$ is $\alpha$-winning on $K$, then $\dimH E=d$.
 \medskip
 \item[(ii)] If $E$ is $\alpha$-winning on $K$, then it is $\alpha'$-winning on $K$ for any $0<\alpha'\leq\alpha$.
 \medskip

 \item[(iii)] If $E_n$ is $\alpha$-winning on $K$ for any $n\in\N$, then $\bigcap_{n=1}^{\infty}E_n$ is $\alpha$-winning on $K$.\medskip

 \item[(iv)] If $E\neq\R^d$ and $\alpha>1/2$, then $E$ is not $\alpha$-winning on $\R^d$.
    \end{itemize}
\end{proposition}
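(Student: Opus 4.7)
The plan is to treat the four assertions as classical facts from Schmidt's original paper and to dispatch each by exhibiting an explicit strategy tailored to the claim. The simplest is (iv): if $E \neq \R^d$, I would pick $p \notin E$ and hand Bob the invariant-preserving strategy that keeps the centre of $B_k$ equal to $p$ throughout the game. A short computation shows the invariant can be restored after any Alice move precisely when $(1-\alpha)|B_k|/2 \leq (1-\gamma)\alpha|B_k|/2$, i.e.\ when $\alpha \geq 1/(2-\gamma)$; since $\alpha > 1/2$, this holds for all sufficiently small $\gamma$, and for such $\gamma$ Bob forces $\bigcap_{k}B_k = \{p\}\not\subseteq E$. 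Consequently $E$ is not $(\alpha,\gamma)$-winning for this $\gamma$, and therefore not $\alpha$-winning on $\R^d$.

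For (ii) I would run a direct game-simulation. Given $\alpha' \leq \alpha$ and $\gamma \in (0,1)$, set $\gamma^* := \gamma\alpha'/\alpha \in (0,1)$ and identify the real $(\alpha',\gamma)$-game with a virtual $(\alpha,\gamma^*)$-game by enlarging each actual Alice move $A_k$ to a virtual $\tilde A_k \supseteq A_k$ of diameter $\alpha|B_k|$, while leaving Bob's moves unchanged. The identities $|\tilde A_k| = \alpha|B_k|$ and $|B_{k+1}| = \gamma|A_k| = \gamma^*|\tilde A_k|$ confirm that the virtual game is legal; Alice can then apply her $\alpha$-winning strategy to the virtual game and realise each virtual $\tilde A_k$ concretely by centring $A_k$ at the same point as $\tilde A_k$, which is already in $K$ and automatically satisfies $A_k \subseteq \tilde A_k$, forcing $\bigcap_{k} B_k \subseteq E$.

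For (i), my plan is a standard Cantor-type construction. Working inside an interior ball of $K$ (which exists by hypothesis), fix a small $\gamma$ and let Bob play an $N$-ary branching strategy that packs $N \asymp \gamma^{-d}$ disjoint admissible successor balls into each $A_k$. Because Alice's winning strategy must succeed against every such Bob strategy, each branch of the resulting tree produces an intersection point in $E$, exhibiting an explicit Cantor subset of $E$ whose Hausdorff dimension is at least $\log N / \log(1/(\alpha\gamma))$ via the standard mass-distribution lemma; letting $\gamma \to 0$ pushes this lower bound to $d$, so $\dimH E = d$.

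The hard part will be (iii), which I would attack by a diagonal scheduling of the individual winning strategies $\sigma_n$ for $E_n$. Fix a partition $\N = \bigsqcup_{n} I_n$ into infinite subsets and, at each real stage $k \in I_n$, commit Alice to the move prescribed by $\sigma_n$ applied to a virtual $(\alpha,\tilde\gamma_n)$-subgame in which the real stages between successive $I_n$-indices are compounded into a single virtual Bob move of compressed ratio $\tilde\gamma_n$. The delicate point, on which I expect to spend the most effort, is the simultaneous compatibility of all these virtual subgames: one real move $A_k$ must legally serve every $\sigma_n$ at once. The key leverage is the strength of the $\alpha$-winning hypothesis — each $\sigma_n$ exists for \emph{every} $\gamma$ — which lets $\tilde\gamma_n$ be tuned independently for each $n$ so that, with the partition chosen coarse enough, the real stages lying between successive $I_n$-events absorb the compatibility constraints imposed by the remaining strategies. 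Once this is verified, $\bigcap_{k}B_k \subseteq E_n$ for every $n$, giving $\bigcap_{k}B_k \subseteq \bigcap_{n}E_n$ as required.
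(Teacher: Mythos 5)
The paper offers no proof of this proposition --- it is quoted directly from Schmidt's 1966 paper --- so the only meaningful comparison is with Schmidt's classical arguments, which your sketches for (i), (ii) and (iv) reproduce correctly. The invariant-centre strategy for (iv) (with the threshold $\alpha\geq 1/(2-\gamma)$, attainable for small $\gamma$ once $\alpha>1/2$), the concentric-enlargement simulation for (ii) with $\gamma^{*}=\gamma\alpha'/\alpha$, and the branching-tree plus mass-distribution argument for (i) are all sound; for (i) one should just make sure the $N\asymp\gamma^{-d}$ successor balls are chosen with definite separation (not merely disjoint) so the mass distribution principle applies cleanly, which costs nothing.

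Part (iii) is where the gap is, and you have mislocated the difficulty. There is no ``simultaneous compatibility'' problem: each real stage $k$ lies in exactly one block $I_n$, the real move $A_k$ is the move of $\sigma_n$ alone, and every other strategy $\sigma_m$ never has to be served by $A_k$ --- from $\sigma_m$'s viewpoint all balls chosen between two consecutive stages of $I_m$ are compounded into a single Bob move, which is automatically legal because nesting is preserved. The genuine constraint, which your sketch does not secure, is that the compounded contraction ratio seen by $\sigma_n$ must be a single fixed number $\tilde\gamma_n$: if $k<k'$ are consecutive elements of $I_n$, then $|B_{k'}|=\gamma(\alpha\gamma)^{k'-k-1}|A_k|$, so this ratio is constant only if consecutive elements of $I_n$ have constant gap, i.e.\ each $I_n$ is an arithmetic progression. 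An arbitrary ``partition of $\N$ into infinite subsets'' chosen ``coarse enough'' does not deliver this; one needs, e.g., the dyadic schedule $I_n=\{k\in\N: k\equiv 2^{n-1}\ (\mathrm{mod}\ 2^{n})\}$, which makes the gap exactly $2^{n}$ and hence $\tilde\gamma_n=\gamma(\alpha\gamma)^{2^{n}-1}$; only then does the hypothesis that $E_n$ is $(\alpha,\gamma')$-winning for \emph{every} $\gamma'$ hand you the required strategy $\sigma_n$ for the $(\alpha,\tilde\gamma_n)$-game. With that fix, and the observation that $\bigcap_{k\in I_n}B_k=\bigcap_{k\in\N}B_k\subseteq E_n$ for each $n$ since $I_n$ is cofinal, the proof closes exactly as you intend.
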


The Proposition \ref{winnpro} (i) illustrates the largeness of winning sets, in the sense of dimension, whose ambient spaces possess non-empty interior. It is therefore natural and desirable to expect that if $K\subseteq\R^d$ is a closed subset having positive dimension and $E\subseteq K$ is winning on $K$, then the Hausdorff dimension of $E$ is also large. The next result confirms the expectation in some sense. To state it precisely,  define the lower Assouad dimension of a given subset $E\subseteq\R^d$ as
\begin{equation*}
                \lodimA E:=\sup\left\{s\geq0\left|
                \begin{aligned}
                    &\exists\, c>0\text{ such that }\forall\,\bfx\in E,\,\forall\,\epsilon,\rho\in(0,1),\\
                &\text{it holds that } N_{\epsilon\rho}(B(\bfx,\rho)\cap E)\geq c\epsilon^{-s}
                \end{aligned}
                \right.\right\},
            \end{equation*}
where $N_r(E)$ denotes the smallest number of sets with diameter $r$ needed to cover $E$.
\begin{proposition}[{\cite[Proposition 2.5]{MR3904182}}]\label{dimhlowdima}
    Let $K\subseteq\R^d$ be a closed set and $E\subseteq K$ be winning on $K$, then
    \begin{equation*}
        \dimH E\geq\lodimA K.
    \end{equation*}
\end{proposition}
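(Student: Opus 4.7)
The plan is to mimic Schmidt's classical argument that $\alpha$-winning sets in $\R^d$ have Hausdorff dimension $d$, but with the ambient dimension $d$ replaced throughout by the refined quantity $\lodimA K$. Fix $s<\lodimA K$ and, using the hypothesis that $E$ is $\alpha$-winning on $K$, let $\sigma$ denote a winning strategy for Alice in the $(\alpha,\gamma)$-game on $K$, where the parameter $\gamma\in(0,1)$ will be chosen small and eventually sent to $0$.

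First I would analyse a single move of the game. If $B_k=B(\bfb_k,\rho_k)$ is a legal move for Bob (so $\bfb_k\in K$) and $A_k=B(\bfa_k,r_k)$ with $\bfa_k\in K$ and $r_k=\alpha\rho_k$ is Alice's response, then Bob's next move is any ball $B(\bfx,\gamma r_k)$ with $\bfx\in B(\bfa_k,(1-\gamma)r_k)\cap K$, and two such balls are disjoint iff their centres are $2\gamma r_k$-separated. Applying the definition of $\lodimA K>s$ at the point $\bfa_k$ with scales $\rho=(1-\gamma)r_k$ and $\epsilon=2\gamma/(1-\gamma)$, together with the standard conversion between covering numbers and maximal separated sets, produces a family of
\[
N \;=\; N(\gamma,s) \;\geq\; c\,\gamma^{-s}
\]
pairwise disjoint admissible choices $B_{k+1}^{(1)},\ldots,B_{k+1}^{(N)}$ for Bob, where $c=c(s,d)>0$ is independent of $\gamma$ and $k$.

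Second, I would build an $N$-ary Cantor tree. Starting from any initial ball $B_1$ centred in $K$, I attach to each node $B_k$ the $N$ children produced above. When played against Alice's strategy $\sigma$, every infinite branch of this tree constitutes a legal play of the game and therefore, by the winning property, its nested intersection is a single point of $E$. The resulting Cantor set $C\subseteq E$ consists, at each level $k$, of $N^k$ pairwise disjoint closed balls of common radius $\rho_k=(\alpha\gamma)^{k-1}\rho_1$. A routine mass-distribution argument now finishes the proof: assign uniform mass $N^{-k}$ to each level-$k$ ball; since these balls are disjoint and of equal size, any $F\subseteq\R^d$ with diameter $r\in[\rho_{k+1},\rho_k]$ meets $O(1)$ of them, which yields the Frostman-type estimate $\mu(F)\lesssim|F|^{t}$ for $t=\log N/\log(1/(\alpha\gamma))$. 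Hence
\[
\dimH E \;\geq\; \dimH C \;\geq\; \frac{\log N}{\log(1/(\alpha\gamma))} \;\geq\; \frac{s\log(1/\gamma)-O(1)}{\log(1/\alpha)+\log(1/\gamma)}.
\]
Letting $\gamma\to 0^{+}$ with $\alpha$ and $s$ fixed gives $\dimH E\geq s$, and then letting $s\uparrow\lodimA K$ concludes the proof.

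The only genuine difficulty I anticipate is bookkeeping rather than depth: one has to choose the parameters $\rho,\epsilon$ inside the definition of $\lodimA K$ so that the resulting packing yields balls which are \emph{simultaneously} contained in $A_k$, admissible as Bob's next move (centred in $K$, of diameter exactly $\gamma\cdot|A_k|$), and pairwise disjoint, while keeping the multiplicative constants absorbed during the covering-to-packing conversion and the strict inclusion into $A_k$ independent of $\gamma$. Once the factor $\gamma^{-s}$ is secured with a $\gamma$-independent constant, the Frostman step and the double limit $\gamma\to 0$, $s\uparrow\lodimA K$ are automatic.
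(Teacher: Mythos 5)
The paper does not actually prove this proposition; it imports it verbatim from \cite[Proposition 2.5]{MR3904182}, so there is no in-paper argument to compare against. Your proof is the standard Schmidt-type Cantor-tree construction and it is sound: applying the lower Assouad bound at Alice's centre with $\rho=(1-\gamma)r_k$ and $\epsilon\asymp\gamma$ does produce $\gtrsim\gamma^{-s}$ admissible, pairwise disjoint moves for Bob with a $\gamma$-independent constant (the covering-number-to-separated-set conversion and the need for strict separation of closed balls only cost multiplicative constants), and the Frostman step works because a set of diameter in $[\rho_{k+1},\rho_k)$ meets at most $4^d$ of the pairwise disjoint level-$k$ balls of radius $\rho_k/2$ — note that in $\R^d$ this is a volume-packing count, not the one-dimensional ``gaps'' argument, but it gives exactly the $O(1)$ you assert. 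With that, the estimate $\mu(F)\lesssim|F|^{t}$, the limit $\gamma\to0^{+}$, and then $s\uparrow\lodimA K$ complete the proof as you describe.
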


 In view of Question \ref{quesofrpietwi}, let $E=\tN(T,\cG)$ be the twisted non-recurrent set discussed in the previous subsection and let $K\subseteq\R^d$ be a closed subset.  In order to estimate $\dimH E\cap K$ via Proposition~\ref{dimhlowdima}, it is essential to prove that $E\cap K$ is winning on $K$. An effective way of establishing this is to use the  hyperplane absolute game, a variant of Schmidt's game introduced by Broderick, Fishman, Kleinbock, Reich and Weiss in the paper \cite{MR2981929}. The formal definition is as follows.

 Let $\gamma\in(0,1/3)$ and $K\subseteq\R^d$ be a closed subset. The \emph{$\gamma$-hyperplane absolute game  on $K$} is played as follows. There are two players in the game, named Bob and Alice. Initially, Bob chooses arbitrarily a closed ball $B_1$ centered in $K$. Then, in each stage, after Bob chooses $B_i$, Alice chooses $A_i=(\cL_i)_{\epsilon_i}$ for some $(d-1)$-dimensional affine hyperplane $\cL_i\subseteq\R^d$ and $0<\epsilon_i\leq\gamma\cdot|B_i|/2$. 
 Next, Bob chooses a closed ball $B_{i+1}$ centered in $K$ that satisfies
  \begin{equation*}
      B_{i+1}\subseteq B_{i}\setminus A_i,\qquad |B_{i+1}|\geq \gamma\cdot|B_i|.
  \end{equation*}
  A set $S\subseteq \R^d$ is said to be \emph{$\gamma$-hyperplane absolute winning on $K$} if Alice has a strategy to ensure that $\bigcap_{i=1}^{\infty}B_i\cap S\neq\emptyset$ whatever Bob chooses his balls. It is \emph{hyperplane absolute winning on $K$} if there exists $\gamma_0>0$ such that it is $\gamma$-hyperplane absolute winning on $K$ for any $0<\gamma\leq\gamma_0$. In the one dimension, the notion of hyperplane absolute winning coincides with that of absolute winning introduced by McMullen \cite{MR2720230}.

 Before exploring the properties of the hyperplane absolute game, we first introduce the useful notion of the hyperplane diffuse set. Given a set $E\subseteq\R^d$ and $\gamma>0$, we say that $E$ is \emph{$\gamma$-hyperplane diffuse} if there exists $r_0>0$ such that for any $\bfx\in E$ and affine hyperplane $\cL$ of dimension $d-1$, we have
 \begin{equation*}
\big(B(\bfx,r)\setminus(\cL)_{\gamma r}\big)\cap E\neq\emptyset,\qquad\forall \ 0<r\leq r_0.
 \end{equation*}
   We simply say that $E$ is \emph{hyperplane diffuse} if it is $\gamma$-hyperplane diffuse for some $\gamma>0$. For example, $[0,1]^d$ is hyperplane diffuse whereas any lower dimensional submanifold of $\R^d$ is not. 
  
  The hyperplane absolute winning property possesses the following basic properties.
  \begin{proposition}[{\cite{MR2981929}}]\label{proofhypdi}
      Let $K\subseteq L$ be two closed subsets in $\R^d$ and let $E\subseteq\R^d$. We obtain that
      \begin{itemize}
          \item[(i)] if $K$ is hyperplane diffuse and $E$ is hyperplane absolute winning on $K$, then $E$ is winning on $K$.
          \medskip

          \item[(ii)] if both $K$ and $L$ are hyperplane diffuse and $E$ is hyperplane absolute winning on $L$, then $E$ is hyperplane absolute winning on $K$.
          \medskip

          \item[(iii)] if there exists $\gamma_0$ such that for any $n\in\N$, $E_n$ is $\gamma$-hyperplane absolute winning on $K$ for any $0<\gamma\leq\gamma_0$, then $\bigcap_{n=1}^{\infty}E_n$ is $\gamma$-hyperplane absolute winning on $K$ for any $0<\gamma\leq\gamma_0$.
      \end{itemize}
  \end{proposition}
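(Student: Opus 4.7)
The plan is to prove the three parts by successively transferring or combining winning strategies.

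For part (i), the idea is that Alice simulates her hyperplane-absolute-winning strategy within Schmidt's $(\alpha,\gamma)$-game, converting thin hyperplane avoidances into round sub-balls centered in $K$ via diffuseness. Let $\gamma_0>0$ be such that $K$ is $\gamma_0$-hyperplane diffuse and $E$ is $\gamma_0$-hyperplane absolute winning on $K$. Fix $\alpha\in(0,1)$ small enough that $\alpha(1+\gamma_0)<\gamma_0$. For any $\gamma\in(0,1)$, set $\gamma_1:=\min\{\gamma\alpha,\,\gamma_0-\alpha(1+\gamma_0)\}>0$ and have Alice use her $\gamma_1$-HAW strategy. Given Bob's Schmidt ball $B_k=B(c_k,|B_k|/2)$ with $c_k\in K$, the HAW strategy produces a hyperplane $\cL_k$ and width $\epsilon_k\leq\gamma_1|B_k|/2$. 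Applying $\gamma_0$-diffuseness to the ball $B(c_k,(1-\alpha)|B_k|/2)$ centered at $c_k\in K$, Alice selects $x_k\in K$ inside this ball with $\dist(x_k,\cL_k)>\gamma_0(1-\alpha)|B_k|/2$. She then plays $A_k:=B(x_k,\alpha|B_k|/2)$; a triangle-inequality check using the choice of $\gamma_1$ yields $A_k\subseteq B_k$ and $A_k\cap(\cL_k)_{\epsilon_k}=\emptyset$. Bob's Schmidt response $B_{k+1}\subseteq A_k$ with $|B_{k+1}|=\gamma\alpha|B_k|\geq\gamma_1|B_k|$ then constitutes a legal HAW move, so the HAW winning property guarantees $\bigcap_k B_k\cap E\neq\emptyset$; hence $E$ is $\alpha$-winning on $K$.

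For part (ii), the transfer is immediate: since $K\subseteq L$, every ball centered in $K$ is also centered in $L$, and hyperplane neighborhoods are valid Alice moves on either set. Thus any play of the HAW game on $K$ is simultaneously a legal play of the HAW game on $L$. Alice applies her winning HAW-on-$L$ strategy verbatim to Bob's $K$-centered balls; her winning property on $L$ forces $\bigcap_k B_k\cap E\neq\emptyset$. The diffuseness hypotheses on $K$ and $L$ serve to guarantee that both games are non-trivial, in particular that Bob can always produce a legal $K$-centered response of the required size, so the argument does not degenerate.

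For part (iii), the plan is a scheduling argument exploiting the hypothesis that each $E_n$ is $\gamma$-HAW on $K$ for \emph{every} $\gamma\leq\gamma_0$. Fix $0<\gamma\leq\gamma_0$ and, for each $n\in\N$, let $\sigma_n$ be Alice's winning strategy for the $\gamma^{2^{n+1}}$-HAW game on $K$ with target $E_n$. Let $\nu(k)$ denote the $2$-adic valuation of $k$, so each $n$ appears in $(\nu(k))_{k\geq1}$ along an arithmetic progression with common difference $2^{n+1}$. In the real $\gamma$-HAW game on $K$, Alice plays at step $k$ the hyperplane neighborhood prescribed by $\sigma_{\nu(k)}$ applied to the subsequence of balls indexed by $\nu^{-1}(\nu(k))$. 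From $\sigma_n$'s viewpoint, the restricted play at steps in $\nu^{-1}(n)$ is a legal $\gamma^{2^{n+1}}$-HAW game on $K$: between two consecutive such steps there are at most $2^{n+1}$ real steps, so Bob's ball shrinks by a factor $\geq\gamma^{2^{n+1}}$, and the nested containment $B_{k'}\subseteq B_k\setminus A_k$ is preserved through the intermediate moves. Hence $\sigma_n$'s winning guarantee gives $\bigcap_k B_k\cap E_n\neq\emptyset$ for every $n$; as Alice can moreover force $|B_k|\to0$ (e.g.\ by requiring each $\sigma_n$-hyperplane to meet $B_k$), the intersection reduces to a single point lying in every $E_n$, proving $\bigcap_n E_n$ is $\gamma$-HAW on $K$.

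The main technical obstacle is the parameter juggling in part (i), where $\alpha,\gamma,\gamma_0,\gamma_1$ must be balanced simultaneously so that Alice's Schmidt ball lies in $B_k$, is centered in $K$, avoids the hyperplane neighborhood, and yet permits Bob's response to count as a legal HAW move. Part (iii) is less delicate but still demands careful verification that the subsampled subgames are genuine $\gamma^{2^{n+1}}$-HAW plays from each $\sigma_n$'s perspective.
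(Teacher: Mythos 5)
The paper does not prove this proposition; it is quoted from \cite{MR2981929}, so the comparison is with the standard argument there. Your parts (i) and (ii) reproduce that argument correctly: the parameter bookkeeping in (i) checks out ($A_k\subseteq B_k$ needs $|x_k-c_k|\le(1-\alpha)|B_k|/2$, and $A_k\cap(\cL_k)_{\epsilon_k}=\emptyset$ needs $\gamma_0(1-\alpha)\ge\alpha+\gamma_1$, both of which your choices deliver), modulo the minor points that diffuseness only applies at scales $\le r_0$ (Alice should stall until Bob's balls are small enough) and that one should take $\gamma_1$ below both the diffuseness constant and the HAW threshold of $E$. Part (ii) is the correct observation that a $K$-centred play is a legal $L$-centred play, with diffuseness of $K$ ensuring Bob never gets stuck.

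Part (iii) contains a genuine gap. The interleaving via the $2$-adic valuation and the use of the $\gamma^{2^{n+1}}$-strategies $\sigma_n$ is exactly right, and it is precisely why the hypothesis demands $\gamma$-HAW for \emph{all} $\gamma\le\gamma_0$. But each subsampled game only yields $\bigcap_k B_k\cap E_n\ne\emptyset$ \emph{separately} for each $n$, and your passage to a common point rests on the claim that ``Alice can force $|B_k|\to0$.'' She cannot: in the hyperplane absolute game Bob controls the radii subject only to the lower bound $|B_{k+1}|\ge\gamma|B_k|$, and since Alice removes only a slab of relative width $\gamma<1/3$ from the cube $B_k$, Bob can always respond with a ball of diameter at least $(1-\gamma)|B_k|/2>|B_k|/3$ and keep the radii bounded away from zero; requiring the hyperplanes to meet $B_k$ changes nothing. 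When $|B_k|\not\to0$, the set $\bigcap_k B_k$ is a ball of positive radius and the separate nonempty intersections with the $E_n$ give no common point. This case must be handled explicitly — e.g.\ by the convention (implicit in \cite{MR2981929} and in this paper's own Section 5, where the authors write ``without loss of generality we assume $\rho_k\to0$'') that the game is equivalent to the variant in which Alice is declared the winner whenever the radii fail to tend to zero, or by a density argument; as written, your proof of (iii) does not close.
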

 \noindent The Proposition \ref{proofhypdi} (i) and (ii) illustrate the strength of the hyperplane absolute game: the property of being hyperplane absolute winning on the full space implies winning on a broad class of subsets. On combining this  with Proposition \ref{dimhlowdima}, we obtain the following corollary regarding the dimension of intersections.
 \begin{corollary}\label{corscplodimak}
     Let $L$ be a closed and hyperplane diffuse subset in $\R^d$ and let $S\subseteq\R^d$ be hyperplane winning on $L$. Then, for any closed hyperplane diffuse subset $K\subseteq L$, we obtain that $S$ is hyperplane absolute winning on $K$ and that $$\dimH S\cap K\geq\lodimA K.$$
 \end{corollary}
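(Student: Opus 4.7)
The plan is to chain together the two parts of Proposition~\ref{proofhypdi} with Proposition~\ref{dimhlowdima}; no new geometric input is needed, only a careful check that the winning condition can be upgraded from $S$ to $S\cap K$.

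First, to establish that $S$ is hyperplane absolute winning on $K$, I would apply Proposition~\ref{proofhypdi}(ii) directly: both $K$ and $L$ are closed and hyperplane diffuse, $K\subseteq L$, and $S$ is hyperplane absolute winning on $L$ --- precisely the hypotheses of that part. This disposes of the first assertion.

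For the dimension bound, I would invoke Proposition~\ref{proofhypdi}(i) together with the hyperplane diffuseness of $K$ to conclude that $S$ is $\alpha$-winning on $K$ for some $\alpha\in(0,1)$. The subtle point is that Proposition~\ref{dimhlowdima} requires the winning \emph{subset} to live inside $K$, whereas a priori $S$ is only a subset of $\R^d$. To bridge this, I would observe that in any instance of the $(\alpha,\gamma)$-game on $K$ in which Alice follows her winning strategy, the nested closed balls $B_1\supseteq B_2\supseteq\cdots$ have diameters contracting geometrically to zero, so $\bigcap_{k\geq 1}B_k=\{x^\ast\}$ is a singleton. Since each $B_k$ is centred in $K$, the sequence of centres is Cauchy and converges to $x^\ast$, and the closedness of $K$ then forces $x^\ast\in K$; combined with $x^\ast\in S$ guaranteed by Alice's strategy, this yields $\bigcap_{k\geq 1}B_k\subseteq S\cap K$. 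Thus the same strategy witnesses $S\cap K$ being $\alpha$-winning on $K$.

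Applying Proposition~\ref{dimhlowdima} to $E=S\cap K\subseteq K$ then delivers $\dimH (S\cap K)\geq \lodimA K$, completing the argument. I do not foresee any serious obstacle: every step is a direct invocation of a previously cited result, and the only mildly non-trivial point is the closedness argument that promotes a set winning on $K$ to a subset of $K$ that is winning on $K$.
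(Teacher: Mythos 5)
Your proof is correct and follows essentially the same route as the paper, which obtains the corollary by combining Proposition~\ref{proofhypdi}(i)--(ii) with Proposition~\ref{dimhlowdima}. The only addition is your explicit check that the intersection point of the nested balls lies in $K$ (so that $S\cap K$, not just $S$, is winning on $K$); this is a genuine but routine point that the paper leaves implicit, and your treatment of it is sound.
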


 We now demonstrate an application of  Corollary \ref{corscplodimak}. According to the Proposition 5.1 of \cite{MR2981929}, the support of an absolutely decaying measure is hyperplane diffuse. Furthermore, it is a standard fact that $\lodimA\supp(\mu)=\dimH\supp(\mu)=s$ for any $s$-Ahlfors regular measure $\mu$. Integrating these results with Corollary \ref{corscplodimak}, we obtain the following result on the full Hausdorff dimensional properties of hyperplane absolute winning sets restricted to a class of subsets.
 \begin{corollary}
     Let $L\subseteq\R^d$ be a closed hyperplane diffuse set and let $S\subseteq\R^d$ be hyperplane absolute winning on $L$. Then, for any absolutely decaying and Ahlfors regular measure $\mu$ supported on $L$, we have
     \begin{equation}\label{dimequhypwin}
         \dimH S\cap\supp(\mu)=\dimH\supp(\mu).
     \end{equation}
 \end{corollary}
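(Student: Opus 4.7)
The plan is to reduce everything to Corollary \ref{corscplodimak} by verifying its hypotheses for $K:=\supp(\mu)$ and then identifying the lower Assouad dimension of $K$ with its Hausdorff dimension via the Ahlfors regularity of $\mu$.

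First, I would note that $K=\supp(\mu)$ is automatically closed. Next, invoking Proposition~5.1 of \cite{MR2981929} (cited in the paragraph preceding the statement), the absolutely decaying property of $\mu$ guarantees that $K$ is hyperplane diffuse. Since by assumption $K\subseteq L$ with $L$ closed and hyperplane diffuse, and $S$ is hyperplane absolute winning on $L$, we are exactly in the setting of Corollary \ref{corscplodimak}. Applying it yields
\begin{equation*}
    \dimH S\cap K\geq\lodimA K.
\end{equation*}

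It remains to show $\lodimA K=\dimH K$ when $\mu$ is $s$-Ahlfors regular. This is a well-known standard fact: the two-sided bound $C^{-1}r^{s}\leq\mu(B(\bfx,r))\leq Cr^{s}$ combined with a volume-packing argument gives $N_{\epsilon\rho}(B(\bfx,\rho)\cap K)\gtrsim \epsilon^{-s}$ for all $\bfx\in K$ and $\epsilon,\rho\in(0,1)$, whence $\lodimA K\geq s$; the reverse inequality $\dimH K\leq s$ follows directly from the upper regularity bound via a Frostman-type argument (or just the standard computation of $\dimH\supp(\mu)$ for Ahlfors regular $\mu$). Hence $\lodimA K=\dimH K=s$, and we obtain $\dimH S\cap K\geq\dimH K$. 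The reverse inequality $\dimH S\cap K\leq\dimH K$ is trivial by monotonicity of Hausdorff dimension, so \eqref{dimequhypwin} follows.

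There is really no substantive obstacle here: the corollary is an immediate packaging of Corollary \ref{corscplodimak} together with the two standard facts about absolutely decaying (giving hyperplane diffuseness) and Ahlfors regular (giving $\lodimA=\dimH$) measures. The only point worth being careful about is ensuring that the hypothesis ``$K\subseteq L$ with both closed and hyperplane diffuse'' from Corollary \ref{corscplodimak} is genuinely fulfilled, which is precisely what Proposition 5.1 of \cite{MR2981929} provides.
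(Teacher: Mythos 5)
Your proposal is correct and follows exactly the paper's own route: the paper likewise combines Proposition~5.1 of \cite{MR2981929} (absolutely decaying implies hyperplane diffuse support), the standard identity $\lodimA\supp(\mu)=\dimH\supp(\mu)$ for Ahlfors regular measures, and Corollary~\ref{corscplodimak}. Nothing is missing.
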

 \begin{remark}\label{existresosc}
     The corollary applies, in particular, to a broad class of self-conformal sets. Precisely,  let $K$ be a self-conformal set that satisfies the open set condition in $\R^d$.  It is well known that the restricted Hausdorff measure $\mu:=\cH^{s}|_{K}$ is $s$-Ahlfors regular, where $s:=\dimH K$ and $\cH^{s}$ is the $s$-dimensional Hausdorff measure. Moreover, if in addition that $K$ is irreducible, then the Theorem 1.6 of \cite{MR4283274} illustrates that the measure $\mu$ is also absolutely decaying. Therefore, under these assumptions, the equality \eqref{dimequhypwin} is valid with $\supp(\mu)$ replaced by such a self-conformal set. In the Theorem \ref{conformalhyper} below, we will further remove the open set condition.
 \end{remark}
 
 However, Corollary \ref{corscplodimak} is suboptimal due to the fact that the lower Assouad dimension is not monotonic; that is to say that there might be a subset $K'\subseteq K$ such that $\lodimA K'>\lodimA K$. See Section 3.1 of \cite{MR4411274} for details. In light of this, the following result is more applicable in practice.

\begin{corollary}\label{dimhscapkgeqkia}
Let $L\subseteq\R^d$ be a closed hyperplane diffuse set and let $S\subseteq \R^d$ be hyperplane absolute winning on $L$. Then, for any  subset $K\subseteq L$, we have
\begin{equation*}
    \dimH S\cap K\geq\sup\big\{\lodimA K':\,\text{$K'\subseteq K$ is closed hyperplane diffuse}\big\}.
\end{equation*}
Here,  we adopt the convention that $\sup\emptyset:=0$.
\end{corollary}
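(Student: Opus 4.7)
The plan is to deduce the corollary as a direct consequence of Corollary \ref{corscplodimak} together with the monotonicity of the Hausdorff dimension. The argument requires essentially no new input; the role of the corollary is to upgrade the bound in Corollary \ref{corscplodimak} by exploiting the fact that $K$ itself need not be hyperplane diffuse, yet may still contain a substantial closed hyperplane diffuse subset on which the winning machinery can be deployed.

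First I would dispose of the degenerate case. If there is no closed hyperplane diffuse subset $K'\subseteq K$, then by the convention $\sup\emptyset=0$ the right-hand side equals $0$, and the inequality $\dimH S\cap K\geq 0$ is automatic. So one may assume such a $K'$ exists and fix an arbitrary one.

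Next I would apply Corollary \ref{corscplodimak}. Since $K'\subseteq K\subseteq L$, $K'$ is a closed hyperplane diffuse subset of the closed hyperplane diffuse set $L$, and $S$ is hyperplane absolute winning on $L$ by hypothesis. The corollary then yields
\begin{equation*}
    \dimH S\cap K'\geq \lodimA K'.
\end{equation*}
Using the monotonicity of Hausdorff dimension together with the inclusion $S\cap K'\subseteq S\cap K$, I obtain
\begin{equation*}
    \dimH S\cap K\geq \dimH S\cap K'\geq \lodimA K'.
\end{equation*}
Taking the supremum over all closed hyperplane diffuse subsets $K'\subseteq K$ completes the proof.

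There is no genuine obstacle here; the whole content lies in the preceding Corollary \ref{corscplodimak}, which itself packages Proposition \ref{proofhypdi}(ii) (transfer of the hyperplane absolute winning property from $L$ down to $K'$), Proposition \ref{proofhypdi}(i) (upgrading hyperplane absolute winning to ordinary winning on $K'$), and Proposition \ref{dimhlowdima} (which bounds the Hausdorff dimension of a winning set in terms of the lower Assouad dimension of its ambient space). The only subtle point worth emphasizing in the write-up is the non-monotonicity of the lower Assouad dimension recalled just before the statement: this is precisely what makes the supremum on the right-hand side non-trivial and in general strictly larger than $\lodimA K$, so the corollary is a genuine strengthening of Corollary \ref{corscplodimak}, not a cosmetic reformulation.
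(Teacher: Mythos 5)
Your proof is correct and takes essentially the same route as the paper: the paper combines Proposition \ref{proofhypdi} (i)--(ii) with Proposition \ref{dimhlowdima} directly, while you route through Corollary \ref{corscplodimak}, which is precisely that combination, and then conclude by monotonicity of Hausdorff dimension and taking the supremum. Your explicit treatment of the degenerate case $\sup\emptyset=0$ is a harmless addition the paper omits.
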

\begin{proof}
    Let $L$, $S$ and $K$ be as give in the proposition. By combining (i)-(ii) of Proposition \ref{proofhypdi} and Proposition \ref{dimhlowdima}, for any closed hyperplane diffuse subset $K'\subseteq K$, we have $\dimH S\cap K\geq\dimH S\cap K'\geq\lodimA K'$. The proof is complete by taking the supremum of $\dimH K'$ over all closed hyperplane diffuse subsets of $K$. 
\end{proof}

This result provides the following two steps for proving $\dimH S\cap K\geq s$:
 \begin{itemize}
     \item[(i)] show that $S$ is hyperplane absolute winning on a closed and hyperplane diffuse set $L\supseteq K$; 
     \medskip

     \item[(ii)] for any $\epsilon>0$, find a closed and hyperplane diffuse subset $K'\subseteq K$ such that $\lodimA K'\geq s-\epsilon$.
 \end{itemize}
 The approach has been used in \cite{MR3904182} estimating the lower bound of $\dimH\BAD_d\cap K$ for a class of self-affine sets $K\subseteq\R^d$.  Using the same strategy, we obtain our first main result stated below, which establishes the full Hausdorff dimension of hyperplane absolute winning sets on irreducible self-conformal sets. A notable improvement here is the removal of the open set condition, thereby significantly extending the applicability of previous results (such as those mentioned in Remark~\ref{existresosc}) and providing an affirmative answer to Question~\ref{removeosc} (see Remark~\ref{answerremosc}). The proof is given in Section~\ref{proofselfcon}.

\begin{theorem}\label{conformalhyper}
Let $L\subseteq\R^d$ be a closed hyperplane diffuse set and let $S\subseteq \R^d$ be hyperplane absolute winning on $L$.  Then, for any irreducible self-conformal set $K\subseteq L$, we have $$\dimH S\cap K=\dimH K.$$
\end{theorem}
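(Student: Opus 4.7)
The upper bound $\dimH S\cap K\leq \dimH K$ is immediate from the monotonicity of Hausdorff dimension, so the content of the theorem is the lower bound. I would proceed through Corollary~\ref{dimhscapkgeqkia}, which reduces the task to producing, for every $\epsilon>0$, a closed hyperplane diffuse subset $K'\subseteq K$ with $\lodimA K'\geq \dimH K-\epsilon$.

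To build $K'$, first write $K$ as the attractor of a conformal iterated function system $\Phi=\{f_i\}_{i\in I}$. Since no separation condition is imposed on $\Phi$, the crucial step is to extract from a sufficiently deep iterate $\Phi^n$ a finite sub-system $\Phi'$ whose attractor $K'\subseteq K$ both satisfies the strong separation condition and has $\dimH K'\geq \dimH K-\epsilon$. The heuristic is that at scale $\lambda^n$ (where $\lambda<1$ is the maximal contraction ratio) overlaps of $n$-cylinders can be controlled if one prunes greedily, so a maximal pairwise-disjoint packing of $n$-cylinders loses at most an arbitrarily small amount of the dimension exponent. I would additionally arrange the extraction to preserve irreducibility: since $K$ is not contained in any $(d-1)$-dimensional real-analytic submanifold, a compactness argument over the space of such submanifolds of bounded complexity lets one enlarge the pruned family, if necessary, by a bounded number of additional cylinders so that the resulting attractor still refuses to lie on any such submanifold.

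With $\Phi'$ in hand I would invoke the known regularity machinery. Since $\Phi'$ satisfies the strong separation condition, $\cH^{s'}|_{K'}$ with $s'=\dimH K'$ is $s'$-Ahlfors regular; hence $\lodimA K'=s'\geq \dimH K-\epsilon$. Because $K'$ is additionally irreducible, Theorem~1.6 of \cite{MR4283274} shows $\cH^{s'}|_{K'}$ is absolutely decaying, so by Proposition~5.1 of \cite{MR2981929} the set $K'$ is hyperplane diffuse. Since $K'\subseteq K\subseteq L$, Corollary~\ref{dimhscapkgeqkia} delivers $\dimH S\cap K\geq \dimH K-\epsilon$, and letting $\epsilon\downarrow 0$ finishes the proof.

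The principal obstacle is the sub-IFS extraction: producing $\Phi'$ with strong separation, near-optimal dimension, and preserved irreducibility when no separation whatsoever is given for $\Phi$. The selection must be made carefully enough that the inevitable loss of cylinders to overlaps does not eat into the dimension beyond $\epsilon$, and simultaneously it must retain the geometric ``spread'' of $K$ that underlies irreducibility. I would expect this to occupy the bulk of the detailed proof, while the remaining steps are direct applications of the structural results already cited.
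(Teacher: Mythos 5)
Your overall skeleton coincides with the paper's: reduce to Corollary~\ref{dimhscapkgeqkia}, extract a Falconer-style maximal pairwise-disjoint sub-family of cylinders at a small scale to get a sub-IFS with the strong separation condition and dimension at least $\dimH K-\epsilon$, then get hyperplane diffuseness from irreducibility plus SSC via Theorem~1.6 of \cite{MR4283274} and Proposition~5.1 of \cite{MR2981929}. You also correctly identify the crux: showing that the pruned sub-system remains irreducible.

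However, your proposed mechanism for that crux does not work, and this is a genuine gap. You suggest enlarging the pruned family ``by a bounded number of additional cylinders'' using ``a compactness argument over the space of such submanifolds of bounded complexity.'' The class of $(d-1)$-dimensional real-analytic submanifolds of $\R^d$ has no bounded-complexity parametrization and is not compact in any sense that would make this enlargement argument run; moreover, adding cylinders can destroy the strong separation you just arranged, and there is no a priori bound on how many additions would be needed to escape \emph{every} real-analytic hypersurface. The paper's Proposition~\ref{subsetirr} avoids this entirely by proving that the maximal disjoint sub-family $\cI_r$ is \emph{automatically} irreducible once $r$ is small. The argument is by contradiction and renormalization: if $K_{r_n}$ were contained in a real-analytic hypersurface $M_n$ for some $r_n\to 0$, one uses that $K_{r_n}$ is $2r_n$-dense in $K$, pushes $K$ into a thin neighborhood of the tangent hyperplane $\bfy_n+T_{\bfy_n}M_n$ via a deep cylinder map $\varphi_{J_n|_N}$ (Taylor expansion plus bounded distortion), rescales by $\|\varphi'_{J_n|_N}\|^{-1}$, and then extracts limits both of the rescaled conformal maps (a normal-families-type compactness for uniformly bi-Lipschitz conformal maps) and of the hyperplanes in the locally compact space $A(d,d-1)$. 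The limit exhibits $K\subseteq f^{-1}(V\cap f(U))$ for a conformal, hence real-analytic, map $f$ and a hyperplane $V$, contradicting the irreducibility of $K$. Without this (or an equivalent) renormalization argument, your construction does not yield the hyperplane diffuse subset that Corollary~\ref{dimhscapkgeqkia} requires.
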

\begin{remark}
  { To establish the desired equality,  it suffices to show that $\dimH S\cap K\geq\dimH K$. As outlined above, the core of the proof lies in finding hyperplane diffuse subsets $K'\subseteq K$ such that the dimensions of these subsets are very close to that of $K$. A suitable path is provided by Falconer's classical construction \cite[Proof of Theorem 4]{MR969315}: for any $\epsilon>0$, he built a self-conformal subset $K'\subseteq K$ that satisfies the strong separation condition whose dimension is greater than $\dimH K-\epsilon$. However,  his argument does not clarify whether the subset is hyperplane diffuse provided that $K$ is irreducible. The gap is filled by our Proposition \ref{subsetirr}, which ensures the existence of hyperplane diffuse self-conformal subsets with the desired properties. }
\end{remark}
\begin{remark}\label{answerremosc}
    By Theorem 2.5 of \cite{MR2981929}, $\BAD_d$ is hyperplane absolute winning on $[0,1]^d$. As a consequence, $\dimH\BAD_d\cap K=\dimH K$ for any irreducible self-conformal set $K\subseteq[0,1]^d$. This answers the Question~\ref{removeosc} positively.
\end{remark}

This concludes our discussion demonstrating the power of hyperplane absolute game in determining the dimension of intersections.  With Theorem \ref{conformalhyper} in mind,    in order to address the Question \ref{quesofrpietwi}, we will examine the hyperplane absolute winning properties of twisted non-recurrent sets in the next part.

\subsection{Twisted non-recurrent sets are hyperplane absolute winning}\label{twishyperplane}  In this part, we will establish the hyperplane absolute winning property of the set $\tN(T,\cG)$ for a certain class of maps $T:[0,1]^d\to[0,1]^d$ and sequences $\cG=\{g_n:[0,1]^d\to[0,1]^d\}_{n\in\N}$. In order to state the result, we first introduce the notion of piecewise expanding maps.
\begin{definition}\label{defofpiee}
   Given $\kappa>0$, we say that the map $T:[0,1]\to[0,1]$ is \emph{piecewise $C^{1+\kappa}$  expanding} if it satisfies the following statements:
    \begin{itemize}
        \item[(i)] $T$ is \emph{piecewise $C^1$ monotonic}; that is to say that there exist a finite or infinite alphabet $\Sigma\subset\N$ and a collection of disjoint open sub-intervals $\{I_i\}_{i\in\Sigma}$ in $[0,1]$ satisfying that $$\overline{\bigcup_{i\in\Sigma}{I}_i}=[0,1]$$ and that $T$ is monotonic and  $C^1$ on  $I_i$ for any $i\in\Sigma$. 
        \vspace{1ex}

%         \item[(ii)] There exists $C>0$ and $\kappa>0$ such that
%         \begin{eqnarray*}
%         |T'(x)-T'(y)|\leq C|x-y|^{\kappa}
%         \end{eqnarray*}
% for any $i\in\Sigma$ and any $x,y\in I_i$.
%         \vspace{1ex}
        
        \item[(ii)] $|T'(x)|\geq 1$ for all $x\in\bigcup_{i\in\Sigma}I_i$ and there exist $\eta>1$ and $N_0\in\N$ such that
        \begin{eqnarray*}
            |(T^{N_0})'(x)|>\eta,\quad\forall\ x\in\bigcup_{\bfu\in\Sigma^{N_0}}I_{\bfu}\,,
        \end{eqnarray*}
        where $\Sigma^n$ is the set of all words with length $n$ over $\Sigma$ for any $n\in\N$, and $I_{\bfu}$ denotes  the \emph{cylinder set associated with $\bfu=u_1u_2\cdot\cdot\cdot u_n\in\Sigma^n$ $(n\in\N)$ } defined as $$I_{\bfu}:=I_{u_1}\cap T^{-1}(I_{u_2})\cap\cdot\cdot\cdot\cap T^{-(n-1)}(I_{u_n})\,.\vspace{1ex}$$
        \item[(iii)] there exist $C>0$  such that
        \begin{eqnarray*}
        |T'(x)-T'(y)|\leq C|x-y|^{\kappa}
        \end{eqnarray*}
for any $i\in\Sigma$ and any $x,y\in I_i$.
    \end{itemize}
Moreover, we say that \emph{$T$ has finite branches} if $\#\Sigma<+\infty$ whereas \emph{infinite branches} if $\#\Sigma=+\infty$.
\end{definition}

The following theorem  represents our main one-dimensional result on winning property of the set $\tN(T,\cG)$. 

\begin{theorem}\label{abs1dim}
    Let $T:[0,1]\to[0,1]$ be a piecewise $C^{1+\kappa}$  expanding map that satisfies one of the following statements:
  \begin{itemize}
            \item[(A)] %(finite branches) 
            $T$ has finite branches.
            \medskip

            \item[(B)] %(full branches) 
             $T(I_i)=(0,1)$ for any $i\in\Sigma$.
        \end{itemize}
 Let $\cG:=\{g_n:[0,1]\to[0,1]\}_{n\in\N}$ be a sequence of functions satisfying that 
    \begin{eqnarray}\label{unilip}
        |g_n(x)-g_n(y)|\leq C|x-y|,\quad\forall\ n\in\N,\ \forall\ x,y\in[0,1]
        \end{eqnarray}
        for some constant $C>0$.
    Then the set $\tN(T,\cG)$ is hyperplane absolute winning on $[0,1]$.
\end{theorem}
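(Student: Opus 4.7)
The plan is to construct an explicit winning strategy for Alice in the $\gamma$-hyperplane absolute game on $[0,1]$, for all sufficiently small $\gamma>0$, which forces the final point $x\in\bigcap_i B_i$ to lie in $\tN(T,\cG)$. Since any winning strategy for a subset $S\subseteq\tN(T,\cG)$ is automatically a winning strategy for $\tN(T,\cG)$ itself, it suffices to produce $\delta>0$ and $N\in\N$ (both of which Alice is allowed to choose adaptively after Bob's first move) such that Alice can guarantee $|T^n x-g_n(x)|\geq\delta$ for every $n\geq N$.

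The first step is a structural analysis of the level-$n$ bad sets. Set $\lambda:=\eta^{1/N_0}>1$ with $\eta$ and $N_0$ as in Definition~\ref{defofpiee}; splitting the $n$-fold chain rule into blocks of length $N_0$ yields $|(T^n)'(x)|\geq\lambda^n/\eta$ on each cylinder $I_\bfu$ with $\bfu\in\Sigma^n$. Define $F_n(x):=T^n(x)-g_n(x)$. Combining the expansion of $T^n|_{I_\bfu}$ with \eqref{unilip} gives, for $x,y\in I_\bfu$,
\[
|F_n(x)-F_n(y)|\geq|T^n x-T^n y|-|g_n(x)-g_n(y)|\geq\left(\tfrac{\lambda^n}{\eta}-C\right)|x-y|.
\]
In particular, for $n\geq N_1$ where $N_1$ is the smallest integer with $\lambda^{N_1}\geq 2C\eta$, the restriction $F_n|_{I_\bfu}$ is strictly monotonic and the level-$n$ bad set
\[
J_\bfu:=\{x\in I_\bfu:|F_n(x)|<\delta\}
\]
is a single subinterval of length at most $4\delta\eta/\lambda^n$.

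Alice's strategy is to cover, over the course of the game, every bad interval $J_\bfu$ with $|\bfu|\geq N:=N_1$ that ever meets Bob's ball. After observing Bob's first move $B_1$ of radius $r_1$, she fixes $\delta\in(0,\gamma^M r_1)$ for a sufficiently large integer $M$ depending only on $\gamma,\lambda,\eta,C$ and the multiplicity constants of the cylinder partition. At each round $i$, she maintains the list of uncovered intervals $J_\bfu$ with $|\bfu|\geq N$ and $J_\bfu\cap B_i\neq\emptyset$; among those satisfying $|J_\bfu|\leq\gamma|B_i|$, she chooses the one whose level $|\bfu|$ is smallest (that is, the one closest to becoming uncoverable at a later round) and plays $A_i$ as a smallest open interval containing $J_\bfu$. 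Under hypothesis (A) the finite-branch assumption together with the standard bounded-distortion estimate for piecewise $C^{1+\kappa}$ maps yields $|I_\bfu|\asymp 1/|(T^n)'|$, so any interval of length $r$ meets at most $O(1+r\lambda^n)$ level-$n$ cylinders; under hypothesis (B) the full-branch condition $T(I_i)=(0,1)$ makes the level-$n$ cylinders into a partition of $[0,1]$ with the same intersection bound. This lets one bound the total amount of work for Alice by a geometric series in $n$ which is summable whenever $\delta$ is small.

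The main obstacle is the synchronisation of Alice's coverage with Bob's scale reduction: as $r_i$ shrinks, the critical level $n^*(r_i):=\lceil\log_\lambda(4\delta\eta/(\gamma r_i))\rceil$ grows, and bad intervals of level $n$ become impossible to cover once $n<n^*(r_i)$. One must verify that within the finite window of rounds during which a given level remains coverable, Alice has enough one-interval-per-round moves to exhaust all its bad intervals that can appear inside Bob's evolving balls; this pigeonhole-style accounting is exactly what determines how small $\delta$ must be chosen relative to $\gamma$. By contrast, the \emph{twisted} nature of the problem---the centres $g_n(x)$ depending on $x$---is absorbed into the first step, since the single-interval structure of each $J_\bfu$ shows that $F_n$ behaves like a mildly perturbed version of $T^n$. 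Thus the argument is a careful but conceptually direct extension of the fixed-centre strategy of \cite{hu2025}.
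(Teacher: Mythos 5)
Your first step---showing that on each level-$n$ cylinder the bad set $\{x: |T^n(x)-g_n(x)|<\delta\}$ is a single interval of length $O(\delta/\inf_{I_\bfu}|(T^n)'|)$, because the expansion of $T^n$ beats the uniform Lipschitz constant of $g_n$---is exactly the paper's Lemma~\ref{lemmaconint}, and it is correct. The rest of the argument, however, has two genuine gaps. First, the counting estimate is wrong: the comparability $|I_\bfu|\asymp 1/\|(T^n)'\|$ fails under hypothesis (A) for non-full-branch maps (e.g.\ $\beta$-transformations, where deep cylinders can be far shorter than $1/|(T^n)'|$ because $T^n(I_\bfu)$ need not have length bounded below), and even where it holds, the number of level-$n$ cylinders meeting an interval of length $r$ is controlled by the \emph{minimal} cylinder length, hence by $(\sup|T'|)^n$, not by $\lambda^n$. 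So the asserted geometric-series bound on Alice's workload is not established.

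Second, and more seriously, the ``pigeonhole-style accounting'' that you flag as the main obstacle is precisely the heart of the proof, and you defer it rather than carry it out. A one-interval-per-round strategy does not obviously suffice: within a single factor-of-$\gamma$ reduction of Bob's radius, several times $n$ can become critical simultaneously (all $n$ in a window whose length is only controlled after fixing $N$ via the uniform expansion \eqref{unifoexpad}), so Alice must handle a block of up to $N+1$ bad intervals in a bounded number of rounds. The paper resolves this with two ingredients you are missing: (i) a preliminary confinement step (Claim~\ref{finitecla}(i) resp.\ Lemma~\ref{lenofik}) forcing Bob's ball into a single cylinder of a dynamically chosen depth $m_k$ --- this is where (A) versus (B) actually enters, via the positive minimal length $\ell$ of nonempty level-$N$ cylinders in case (A) and the full-branch distortion estimates of Lemma~\ref{bdlem} in case (B); and (ii) the multi-target avoidance Lemma~\ref{keystr}/Corollary~\ref{corokeystra}, which shows Alice can eliminate a definite fraction of $N$ target points per move and hence all of them in $O(\log N)$ moves, beating the one-per-round budget. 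Without (i) your cylinder counts are uncontrolled, and without (ii) the number of rounds needed per scale block exceeds the number available. The twisted centres $g_n(x)$ are indeed absorbed exactly as you say, but the game-theoretic core of the proof is not yet present.
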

{\begin{remark}\label{onedimtherek} Several comments are in order.
\begin{itemize}
\item[(i)] The theorem provides a non-trivial generalization of a recent result by Hu, Li and Yu \cite{hu2025} extending their framework from badly approximable sets to a broader setting of twisted non-recurrent sets. Their proof  is based on a key observation: given a fixed point $y\in[0,1]$, they found that
  \begin{equation*}
      B\cap \left(\bigcup_{i=n}^{n+N}\big\{x\in[0,1]:|T^i(x)-y|<r\big\}\right)
  \end{equation*}
  is either empty or an interval provided that the ball $B\subseteq[0,1]$, the radius $r>0$ and $n,N\in\N$  satisfy some relations. However, their argument relies crucially on the quantity $$\min\{|T^{i}(y)-y|:1\leq i\leq N\}\setminus\{0\},$$ which does not easily  transfer to the twisted setting where the target point varies as $y=g_i(x)$. To bridge the gap, our Proposition \ref{keystr}, inspired by \cite[Lemma 3.2]{MR2644378}, avoids analyzing the fine  structure of
  \begin{equation*}
      B\cap \left(\bigcup_{i=n}^{n+N}\big\{x\in[0,1]:|T^i(x)-g_n(x)|<r\big\}\right),
  \end{equation*}
  thereby enabling our general result.\medskip
 
    \item[(ii)] In the Section 5.4 of the paper \cite{MR2644378} by  Broderick, Bugeaud, Fishman, Kleinbock and Weiss,  they informally raised the following question:  \emph{whether $\BAD(T,\mathcal{Y})$ is winning on $K$  whenever $T=T_{\beta}$   is the $\beta$-transformation $T_{\beta}(x):=\beta x\ \tmod \ 1$ defined on $[0,1]$ with $\beta>1$, $\mathcal{Y}=\{y_n\}_{n\in\N}$ is a sequence of points in $[0,1]$ and $K\subseteq[0,1]$ is the support of an absolutely decaying measure?} Recall that if $\mu$ is an absolutely decaying measure on $\R^d$, then $\supp(\mu)$ is hyperplane diffuse. Also note that for any $\beta>1$, the map $T_{\beta}$ is a piecewise $C^{1+\kappa}$ expanding map that satisfies the assumption (A) in Theorem \ref{abs1dim}. Therefore, on combining with Corollary  \ref{corscplodimak}, our Theorem \ref{abs1dim} implies that $\BAD(T_{\beta},\mathcal{Y})$ is winning on $K=\supp(\mu)$ for any $\beta>1$ and absolutely decaying measure $\mu$ supported on $[0,1]$. This answers their question positively. \medskip
     \item[(iii)] In the final stage of writing the paper, we surprisingly found that a very recent work \cite{lambert2025} had independently shown a result that is highly related to our Theorem \ref{abs1dim}: they proved that $\tN(T,\cG)$ is hyperplane absolute winning on $[0,1]$ whenever $T$ is the $\beta$-transformation with $\beta>1$ or Gauss map on $[0,1]$ and $\cG=\{g_n\}_{n\in\N}$ is equicontinuous on $[0,1]$.  Clearly, their considered maps are special cases of piecewise $C^{1+\kappa}$ expanding maps that satisfy the assumption (A) or (B) in our Theorem \ref{abs1dim}, whereas the condition they posed on $\cG$ is slightly weaker than our \eqref{unilip}.
    \end{itemize}
\end{remark}
}
As we will demonstrate below, Theorem $\ref{abs1dim}$ also holds for a class of high-dimensional product systems. We first clarify the notion of product maps. Given $T_1:X_1\to X_1$ and $T_2:X_2\to X_2$,  we define the associated \emph{product map} $T_1\otimes T_2$ as 
\begin{equation}\label{product}
    T_1\otimes T_2(x_1,x_2)=(T_1(x_1),T_2(x_2)),\qquad\forall\ (x_1,x_2)\in X_1\times X_2.
\end{equation}
In order to state the high-dimensional result,  let us introduce the following setting.
\begin{itemize}
    \item  Let $d$ be a positive integer and let $X=[0,1]^d$  with the metric induced by the maximal norm $\|\cdot\|$ defined as in \eqref{maxnorm}.\medskip
\item Let $T:[0,1]^d\to[0,1]^d$ be a map of the form $T=T_1\otimes T_2$,
where $T_1:[0,1]\to[0,1]$ is a $C^{1+\kappa}$ piecewise expanding map on $[0,1]$ that satisfies   (A) or  (B) appearing in the statement of Theorem \ref{abs1dim}, and $T_2$ is a self map on $[0,1]^{d-1}$. Here, we adopt the convention that $[0,1]^{0}:=\emptyset$.
\medskip

\item Let $\cG:=\{g_n:[0,1]^d\to[0,1]^d\}_{n\in\N}$ satisfy that
\begin{eqnarray}\label{conslipgn}
     \hspace{5ex}\|g_{n}(\bfx)-g_{n}(\bfy)\|\leq C\|\bfx-\bfy\|,\qquad\forall\ \bfx,\bfy\in[0,1]^d,\ \forall\ n\in\N
\end{eqnarray}
for some constant $C>0$.
\end{itemize}
\medskip

\noindent It is easily seen that the above setting reduces to that of Theorem \ref{abs1dim} in the case $d=1$.  Therefore, the following result extends Theorem \ref{abs1dim} to high-dimensional space. 

\begin{theorem}\label{highdim}
    Let $T$ and $\cG$ be as above. Then the set $\tN(T,\cG)$ is hyperplane absolute winning on $[0,1]^d$.
    % Then the Hausdorff dimension of the set $N(T,\cG)$ equals $d$. Moreover, if in addition
    % \begin{eqnarray*}
    %     g_n(\bfx)=(g_{n,1}(x_1),g_{n,2}(\bfx_2))\qquad\forall\ \bfx=(x_1,\bfx_2)\in[0,1]\times[0,1]^{d-1},\ \forall\ n\in\N,
    % \end{eqnarray*}
    % where $\{g_{n,1}:[0,1]\to[0,1]\}_{n\in\N}$ is a sequence of functions with uniform Lipschitz constants and $\{g_{n,2}\}_{n\in\N}$ is a sequence of self maps on $[0,1]^{d-1}$, then the set $N(T,\cG)$ is  hyperplane absolute winning on $[0,1]^d$.
\end{theorem}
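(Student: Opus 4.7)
Because $T=T_1\otimes T_2$ and we equip $\R^d$ with the maximal norm, we have
$\|T^n(\bfx)-g_n(\bfx)\|\ge |T_1^n(x_1)-g_n^{(1)}(\bfx)|$,
where $g_n^{(1)}\colon[0,1]^d\to[0,1]$ is the first-coordinate component of $g_n$. Consequently,
$$\tN(T,\cG)\supseteq E:=\left\{\bfx\in[0,1]^d:\liminf_{n\to\infty}\left|T_1^n(x_1)-g_n^{(1)}(\bfx)\right|>0\right\},$$
so it suffices to prove that $E$ is hyperplane absolute winning on $[0,1]^d$. Each $g_n^{(1)}$ inherits the uniform Lipschitz constant $C$ from \eqref{conslipgn}, and $T_2$ plays no role in the definition of $E$, which is crucial since no dynamical hypothesis has been placed on $T_2$.

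The plan is to lift a 1-D winning strategy from Theorem \ref{abs1dim} by having Alice restrict herself to axis-aligned hyperplanes $\cL_i=\{\bfx\in\R^d:x_1=c_i\}$. Under the maximal norm, Bob's balls $B_i=B(\bfy_i,r_i)$ are $d$-dimensional cubes whose projections to the first coordinate are intervals $\pi_1(B_i)=[y_{i,1}-r_i,y_{i,1}+r_i]$ of length $|B_i|$, while Alice's $\epsilon_i$-thickening of $\cL_i$ is the slab $\{|x_1-c_i|<\epsilon_i\}$ of projected width $2\epsilon_i$. The inclusion $B_{i+1}\subseteq B_i\setminus(\cL_i)_{\epsilon_i}$, together with $|B_{i+1}|\ge\gamma|B_i|$ and $\epsilon_i\le\gamma|B_i|/2$, translates directly into the rules of a 1-D $\gamma$-hyperplane absolute game on $[0,1]$ played on the projected sequence $\pi_1(B_i)$. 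Hence any strategy that controls the first-coordinate projection lifts to a winning $d$-D strategy.

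To design the 1-D strategy, I would re-run the proof of Theorem \ref{abs1dim}, in particular Proposition \ref{keystr}. The only wrinkle is that $g_n^{(1)}(\bfx)$ depends on all of $\bfx$ rather than on $x_1$ alone. This is handled by the Lipschitz bound: for $\bfx\in B_i$ we have $|g_n^{(1)}(\bfx)-g_n^{(1)}(\bfy_i)|\le 2Cr_i$, so the bad set $\{\bfx\in B_i:|T_1^n(x_1)-g_n^{(1)}(\bfx)|<\delta\}$ projects into $(T_1^n)^{-1}(J_{n,i})\cap\pi_1(B_i)$, where $J_{n,i}$ is an interval of radius $\delta+2Cr_i$ centered at $g_n^{(1)}(\bfy_i)$. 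Thus the $d$-dimensional bad set is contained in a 1-dimensional bad set with the threshold slightly thickened by a Lipschitz-error term. Alice, who in the game sees Bob's full ball $B_i$ before responding, can therefore evaluate $g_n^{(1)}(\bfy_i)$ and apply her 1-D blocking strategy.

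The main obstacle I anticipate is verifying that the 1-D argument still wins against this Lipschitz-perturbed target. Since $r_i$ shrinks at least geometrically along the game, the extra width $2Cr_i$ eventually becomes negligible compared with the target threshold $\delta$ that Alice is trying to enforce. Concretely, I expect a purely quantitative adjustment of the proof of Theorem \ref{abs1dim}: choose the window of indices $n$ that Alice blocks at stage $i$ coordinated with $r_i$, so that the thickening is absorbed by the margin already built into Proposition \ref{keystr}. Once this is in place, Alice's lifted strategy guarantees that every $\bfx\in\bigcap_i B_i$ satisfies $|T_1^n(x_1)-g_n^{(1)}(\bfx)|\ge\delta$ for all sufficiently large $n$, placing $\bfx$ in $E$ and hence in $\tN(T,\cG)$.
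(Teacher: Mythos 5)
Your proposal is correct and follows essentially the same route as the paper: the paper likewise reduces everything to the first coordinate, restricts Alice to axis-aligned hyperplanes $\{x_1=c\}$ (Lemma \ref{keystr} and Corollary \ref{corokeystra}), and uses the uniform Lipschitz condition on $g_n$ to show that each bad set $B\cap\tN(T,\cG,\delta,n)^c$ lies in a thin slab $\{\bfx: x_1\in J\}$ (Lemma \ref{lemmaconint}), with the slab widths absorbed by the geometric decay of Bob's radii. The quantitative details you defer --- in particular keeping Bob's cubes inside a single branch cylinder of $T_1^n$ so that the relevant preimage is a single short interval, which is exactly where assumptions (A) and (B) diverge --- constitute the inductive Claims carried out in Section \ref{pfofthemainnon}.
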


On combining this with Corollary \ref{dimhscapkgeqkia} and Theorem \ref{conformalhyper}, we obtain the following result on the sizes of intersections of $\tN(T,\cG)$ with fractals, which answers partially the Question~\ref{quesofrpietwi}  and a question asked by Li, Liao, Velani and Zorin \cite{MR4572386} for diagonal real matrix transformations (see Remark \ref{cor1.4remk} below).

\begin{corollary}\label{dimintwitself}
    Let $T$ and $\cG$ be as in Theorem \ref{highdim}. Then, for any subset $K\subseteq[0,1]^d$, we have
    \begin{equation*}
    \dimH \tN(T,\cG)\cap K\geq\sup\big\{\lodimA K':\,\text{$K'\subseteq K$ is closed hyperplane diffuse}\big\}.
\end{equation*}
In particular, for any irreducible self-conformal set $K\subseteq [0,1]^d$, we have $$\dimH \tN(T,\cG)\cap K=\dimH K.$$
\end{corollary}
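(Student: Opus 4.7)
The plan is that this corollary is essentially an immediate assembly of three results already in place: Theorem~\ref{highdim}, Corollary~\ref{dimhscapkgeqkia}, and Theorem~\ref{conformalhyper}. The only substantive ingredient specific to this statement is the observation that $[0,1]^d$ is itself a closed hyperplane diffuse set, which lets us feed $L=[0,1]^d$ and $S=\tN(T,\cG)$ into the intersection machinery developed in Section~\ref{HYWINN}.

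For the first (general) assertion, I would proceed as follows. By Theorem~\ref{highdim}, the hypotheses on $T$ and $\cG$ guarantee that $S=\tN(T,\cG)$ is hyperplane absolute winning on $L=[0,1]^d$. Since $L$ is closed and hyperplane diffuse, Corollary~\ref{dimhscapkgeqkia} applies verbatim to give, for every subset $K\subseteq [0,1]^d$,
\begin{equation*}
\dimH \tN(T,\cG)\cap K\geq \sup\big\{\lodimA K':\, K'\subseteq K\ \text{is closed hyperplane diffuse}\big\},
\end{equation*}
which is precisely the claimed lower bound.

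For the second assertion, let $K\subseteq [0,1]^d$ be an irreducible self-conformal set. Then $K\subseteq L=[0,1]^d$ with $L$ closed and hyperplane diffuse, and $S=\tN(T,\cG)$ is hyperplane absolute winning on $L$ by Theorem~\ref{highdim}. Applying Theorem~\ref{conformalhyper} with this choice of $L$, $S$ and $K$ yields
\begin{equation*}
\dimH \tN(T,\cG)\cap K \;=\; \dimH K.
\end{equation*}
The reverse inequality $\dimH \tN(T,\cG)\cap K\leq \dimH K$ is automatic from the inclusion $\tN(T,\cG)\cap K\subseteq K$, so Theorem~\ref{conformalhyper} in fact supplies the full equality.

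There is essentially no obstacle internal to this corollary; the entire difficulty has been pushed into the supporting results. The hard content sits in Theorem~\ref{highdim} (establishing the hyperplane absolute winning property of $\tN(T,\cG)$ in the product setting without any algebraic structure on $T$) and in Theorem~\ref{conformalhyper} (removing the open set condition for irreducible self-conformal targets). Once those are granted, the present corollary follows by direct citation, with the only verification needed being the trivial fact that $[0,1]^d$ is closed and hyperplane diffuse so that it can play the role of the ambient set $L$.
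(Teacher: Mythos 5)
Your proposal is correct and matches the paper exactly: the corollary is stated as a direct combination of Theorem~\ref{highdim} (giving that $\tN(T,\cG)$ is hyperplane absolute winning on $L=[0,1]^d$, which is closed and hyperplane diffuse), Corollary~\ref{dimhscapkgeqkia} for the general lower bound, and Theorem~\ref{conformalhyper} for the equality on irreducible self-conformal sets. No further argument is needed.
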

{\begin{remark}\label{cor1.4remk}
Let $E\subseteq[0,1]^d$ and $\bfy\in[0,1]^d$. Denote
 \begin{equation*}
    \BAD(T,\bfy,E):=\left\{\bfx\in[0,1]^d\left|\begin{aligned}
         &\exists\,c(\bfx)>0\ \text{such that}\ T^n(\bfx)\notin\bfy+c(\bfx)E\ \tmod\ 1\\ &\text{for any sufficiently large $n\in\N$}
     \end{aligned}\right.\right\}.
 \end{equation*}
 In the Section 5.4 of the paper \cite{MR4572386}, Li, Liao, Velani and Zorin asked the next mentioned question which is in line with our Question~\ref{quesofrpietwi} (i): \emph{if $T:[0,1]^d\to[0,1]^d$ is a real matrix transformation as in \eqref{txeqaxmod1} whose modules of eigenvalues of the corresponding matrix are all greater than one, $\bfy$ is a point in $[0,1]^d$ and $E\subseteq[0,1]^d$ satisfies their bounded condition ($\mathbf{B}$), does it hold that $\dimH \BAD(T,\bfy, E)=d$?} In the case $E=B(\mathbf{0},2)$, we have $\BAD(T,\bfy,E)=\BAD(T,\bfy)$ for any $\bfy\in[0,1]^d$. Moreover, since $B(\mathbf{0},2)\supseteq[0,1]^d$, it follows that $\BAD(T,\bfy)\subseteq\BAD(T,\bfy,E)$ for any subset $E\subseteq[0,1]^d$. Therefore, to establish $\dimH\BAD(T,\bfy,E)=d$, it suffices to show that $\BAD(T,\bfy)$ has full Hausdorff dimension. Naturally, we are more plausible that the equality $\dimH\BAD(T,\bfy)=d$ holds  only assuming that the matrix associated with $T$ has at least one eigenvalue with modules greater than one.   In the following, consider the diagonal matrix transformation $$T(\bfx):=(\beta_1 x_1,\beta_2 x_2,...,\beta_d x_d)\ \tmod\ 1\qquad\forall \ \bfx\in[0,1]^d,$$ where $\beta_i\in\R$ for any $1\leq i\leq d$ and $|\beta_{i_0}|>1$ for at lease one subscript $1\leq i_0\leq d$. Assuming without loss of generality that $|\beta_1|>1$,   the first coordinate map $T_1x_1:=\beta_1x_1\ \tmod \ 1$ becomes a $C^{1+\kappa}$ piecewise expanding map on $[0,1]$ that satisfies the assumption (A)  of Theorem \ref{abs1dim}. Consequently,  Theorem \ref{highdim} implies that $\BAD(T,\bfy)$ is hyperplane winning for any $\bfy\in[0,1]^d$ and thus $\dimH\BAD(T,\bfy)=d$. This partially answers the question in \cite{MR4572386} for diagonal matrix transformations. We in fact establish a more general result: for any $\cG=\{g_n\}_{n\in\N}$ that satisfies \eqref{conslipgn} and a wide class of subsets  $K\subseteq [0,1]^d$ including irreducible self-conformal sets, we have $\dimH\tN(T,\cG)\cap K=\dimH K$. This also provides a partial response to Question~\ref{quesofrpietwi}. Nevertheless, the techniques developed in the present work do not readily extend to broader setting of general real matrix transformations. Therefore, the question in~\cite{MR4572386}, as well as our Question~\ref{quesofrpietwi}, still remains largely open.
\end{remark}
}

\subsection{Generalizations: non-autonomous settings} In this subsection,  we extend Theorem \ref{highdim} to non-autonomous settings.   A \emph{non-autonomous dynamical system} is a pair $(X,\cT)$, where $X$ is a metric space and  $\cT=\{T_n\}_{n\in\N}$ is a sequence of self-maps on $X$.
%Let $(X,d)$ be a metric space and let $\cF=\{f_n\}_{n\in\N}$ be a sequence of self-maps on $X$. A pair of $(X,\cF)$ is called a non-autonomous dynamical system.
For any $i,N\in\N$ and $x\in X$, define
\begin{equation*}
 \sfT_{i,N}(x):=T_{i+N-1}\circ T_{i+N-2}\circ\cdot\cdot\cdot\circ T_i\,(x),\quad\sfT_{i,0}(x):=\id_{X}(x)=x.
\end{equation*}
 Let $\cG=\{g_n:X\to X\}_{n\in\N}$  and let $\bfd$ be the metric on $X$. We define the \emph{twisted non-recurrent set $\tN(\cT,\cG)$ associated with $\cT$ and $\cG$} as
\begin{equation*}
   \mathrm{N}(\cT,\cG)  :=\left\{x\in X:\liminf_{n\to\infty}\bfd\big(\sfT_{1,n}(x),g_n(x)\big)>0\right\}.
\end{equation*}

% In order to state the analogous result of Theorem \ref{highdim} for non-autonomous systems, we first introduce various useful notations. To begin with, let $\cT=\{T_n\}_{n\in\N}$ be a sequence of piecewise $C^1$ monotonic maps on $[0,1]$ as stated in Definition \ref{defofpiee} (i). For any $n\in\N$, let $\Sigma_n\subseteq\N$ and $\{I_{i}(n)\}_{i\in\Sigma_n}$ be  the  alphabet and disjoint open intervals   associated with $T_n$, respectively, as described in Definition \ref{defofpiee} (i). Next, for any $i,N\in\N$, denote
% \begin{equation}\label{SigmaiN}
% \Sigma(i,N):=\Sigma_{i}\times\Sigma_{i+1}\times\cdot\cdot\cdot\Sigma_{i+N-1}.
% \end{equation}
%  Given a word $\bfu=(u_i,u_{i+1},...,u_{i+N-1})\in\Sigma(i,N)$, define the associated cylinder set $I_{\bfu}(i,N)$ as the following
%  \begin{equation}\label{IuiN}
%      I_{\bfu}(i,N):=\bigcap_{n=0}^{N-1}\sfT_{i,n}^{-1}\big(I_{u_{i+n}}(i+n)\big).
%  \end{equation}
Let $\kappa>0$, we say that  $\cT=\{T_n\}_{n\in\N}$ is a sequence of \emph{uniformly piecewise $C^{1+\kappa}$  expanding maps} on the unit interval $[0,1]$ if
\begin{itemize}
        \item[(i)] $T_n$ is piecewise $C^1$ monotonic for any $n\in\N$; \vspace{1ex}
        \item[(ii)] there exists $C>0$ such that
         \begin{equation}\label{c1kappalln}
        |T_n'(x)-T_n'(y)|\leq C|x-y|^{\kappa},\quad\forall\ n\in\N,\ \forall\ i\in\Sigma_n,\ \forall\ x,y\in I_i(n),\vspace{1ex}
    \end{equation}
    where $\Sigma_n\subseteq\N$ and $\{I_{i}(n)\}_{i\in\Sigma_n}$ are  the  alphabet and disjoint open intervals   associated with $T_n$, respectively, as described in Definition \ref{defofpiee} (i);
    \medskip
    \item[(iii)] $|T_n'(x)|\geq1$ for all $n\in\N$ and $x\in\bigcup_{i\in\Sigma_n}I_i(n)$. Moreover there exist $N_0\in\N$ and $\eta>1$ such that 
     \begin{equation}\label{unifoexpad}
     \big|\big(\sfT_{i,N_0}\big)'(x)\big|>\eta,\qquad \forall\ i\in\N,\ \forall \ \bfu\in\Sigma(i,N_0),\ \forall\ x\in I_{\bfu}(i,N_0),\vspace{1ex}
 \end{equation}
 where
 \begin{eqnarray}\label{SigmaiN}
     \Sigma(i,N):=\Sigma_{i}\times\Sigma_{i+1}\times\cdot\cdot\cdot\Sigma_{i+N-1}
 \end{eqnarray} and
  \begin{equation}\label{IuiN}
     I_{\bfu}(i,N):=\bigcap_{n=0}^{N-1}\sfT_{i,n}^{-1}\big(I_{u_{i+n}}(i+n)\big) 
 \end{equation}
 for any $\bfu=u_iu_{i+1}\cdots u_{i+N-1}\in\Sigma(i,N)$.
    \end{itemize}
%where $\sfT_{\cF}(i,n)^{-1}(A):=\{x\in[0,1]:\sfT_{\cF}(i,j,x)\in A\}$.

Now, we propose the following  framework that is analogous to that of Theorem~\ref{highdim} for non-autonomous systems. 

\begin{itemize}
\item Let $X=[0,1]^d$ with the metric induced by the maximal norm.
\medskip

    \item Let $\cT:=\{T_n=T_{n,1}\otimes T_{n,2}\}_{n\in\N}$ be a sequence of self-maps on $[0,1]^d$, where $T_{n,1}:[0,1]\to[0,1]$ and $T_{n,2}:[0,1]^{d-1}\to[0,1]^{d-1}$ for any $n\in\N$.\medskip %Here, we use  $\cF_1\otimes\cF_2$ to  denote the sequence of the functions $\{f_{n,1}\otimes f_{n,2}\}_{n\in\N}$, where $f_1\otimes f_2$ is the product map induced by $f_1$ and $f_2$ (see \eqref{product}).

    \item Suppose further that $\cT_1:=\{T_{n,1}\}_{n\in\N}$  is a sequence of uniformly  piecewise $C^{1+\kappa}$ expanding maps on $[0,1]$ that satisfy one of the following conditions:\medskip

    \begin{itemize}
        \item[($\mathrm{A}'$)] $\#\{T_{n,1}:n\in\N\}<+\infty$ and $T_{n,1}$ has finite branches  for any $n\in\N$.
        % all the maps in $\cT_1$ satisfy assumption (A) appearing in the statement of Theorem \ref{abs1dim}; that is to say that 
        \medskip

        \item[($\mathrm{B}'$)]  
        % all the maps in $\cT_1$ satisfy assumption (B)  in  Theorem \ref{abs1dim}; that is to say that
 $T_{n,1}(I_i(n))=(0,1)$ for all $n\in\N$ and $i\in\Sigma_n$, where $\Sigma_n\subseteq\N$ and $\{I_{i}(n)\}_{i\in\Sigma_n}$ are  the  alphabet and disjoint open intervals   associated with $T_{n,1}$, respectively. % as stated in Definition \ref{defofpiee} (i).
    \end{itemize}
    \medskip
    
    \item Let $\cG:=\{g_n:[0,1]^d\to[0,1]^d\}_{n\in\N}$ satisfy \eqref{conslipgn}.
\end{itemize}

Note that in the case $T_{n}=T$  for all $n\in\N$, the above framework coincides with that of Theorem \ref{highdim}. In light of this,  the following result  generalizes Theorem \ref{highdim} to  non-autonomous dynamical systems on $[0,1]^d$. We refer to Section~\ref{pfofthemainnon} for the proof.

\begin{theorem}\label{onedimnonaut}
    Let $\cT$ and $\cG$ be as above. Then the set $\tN(\cT,\cG)$ is hyperplane absolute winning on $[0,1]^d$.
\end{theorem}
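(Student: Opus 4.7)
The plan is to adapt the strategy used for Theorem~\ref{highdim} to the sequence-of-maps setting, exploiting the uniform hypotheses on $\cT_1$. The key reduction is that since $T_n=T_{n,1}\otimes T_{n,2}$, the first coordinate of $\sfT_{1,n}(\bfx)$ is the composition $\sfT_{1,n}^{(1)}(x_1):=T_{n,1}\circ T_{n-1,1}\circ\cdots\circ T_{1,1}(x_1)$, depending only on $x_1$. Consequently $\tN(\cT,\cG)$ contains the set of $\bfx\in[0,1]^d$ satisfying $\liminf_{n\to\infty}|\sfT_{1,n}^{(1)}(x_1)-g_{n,1}(\bfx)|>0$, where $g_{n,1}$ is the first coordinate of $g_n$. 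Alice will play the $\gamma$-hyperplane absolute game on $[0,1]^d$, for $\gamma$ chosen sufficiently small, using only hyperplane neighborhoods perpendicular to the first coordinate axis, thereby reducing the task to controlling a one-dimensional orbit whose target $g_{n,1}(\bfx)$ drifts with the full vector $\bfx$.

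First I would prove a non-autonomous analogue of Proposition~\ref{keystr}. For any $i,N\in\N$ and $\bfu\in\Sigma(i,N)$, on the cylinder $I_{\bfu}(i,N)$ the composition $\sfT_{i,N}^{(1)}$ is injective and $C^{1+\kappa}$ with derivative bounded below by $\eta^{\lfloor N/N_0\rfloor}$, thanks to the uniform expansion~\eqref{unifoexpad}. Combining this with the uniform H\"older distortion~\eqref{c1kappalln} and the uniform Lipschitz bound~\eqref{conslipgn} on $g_n$, one shows that for every $r>0$ the set
\[
\bigl\{\bfx\in[0,1]^d:\,x_1\in I_{\bfu}(1,n)\text{ and }|\sfT_{1,n}^{(1)}(x_1)-g_{n,1}(\bfx)|<r\bigr\}
\]
lies in a slab $(\cL)_{\rho}$ with $\rho\leq C_0\,r\,\eta^{-\lfloor n/N_0\rfloor}$ and $\cL$ a hyperplane normal to the first coordinate axis, where $C_0$ is independent of $n$ and $\bfu$.

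Next I would implement Alice's strategy. Given Bob's ball $B_i$ of diameter $\rho_i$, Alice selects a level $n_i$ at which the cylinders on the first coordinate have length comparable to $\rho_i$, and then deletes, over finitely many subsequent game rounds, all slabs produced by the above lemma for cylinders $I_{\bfu}(1,n_i)$ whose projection meets $B_i$. Under assumption~$(\mathrm{A}')$ the finiteness of $\{T_{n,1}:n\in\N\}$ and of each $\Sigma_n$ bounds the number of such cylinders uniformly; under assumption~$(\mathrm{B}')$ the full-branch Markov property together with the nesting $B_i\subseteq B_{i-1}\setminus A_{i-1}$ keeps the number of intersecting cylinders bounded. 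In either case, the geometric decay of slab widths ensures the total fits within Alice's round-$i$ allowance $\gamma|B_i|/2$ once $r$ and $\gamma$ are calibrated, and Proposition~\ref{proofhypdi}(iii) allows us to intersect the resulting winning sets across all large $n$.

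The principal technical hurdle lies in case~$(\mathrm{B}')$, where the alphabets $\Sigma_n$ may be infinite and the compositions $\sfT_{i,N}^{(1)}$ depend on the starting index~$i$: the expansion and distortion constants must be shown to remain uniform in~$i$, and intermediate lengths not divisible by $N_0$ must be absorbed via $N_0$-block covering arguments. A further subtlety is the drifting target $g_{n,1}(\bfx)$: the slab to be deleted is defined implicitly, its position depending on $\bfx$ through $g_{n,1}$, so the Lipschitz bound~\eqref{conslipgn} must be invoked to replace it, at the cost of a constant factor, by an explicit hyperplane neighborhood that works uniformly over Bob's ball. Once this bookkeeping is complete, iterating Alice's strategy yields $\delta>0$ and $n_0\in\N$ with $|\sfT_{1,n}^{(1)}(x_1)-g_{n,1}(\bfx)|\geq\delta$ for every $n\geq n_0$ and the unique $\bfx\in\bigcap_i B_i$; hence $\bfx\in\tN(\cT,\cG)$ and $\tN(\cT,\cG)$ is $\gamma$-hyperplane absolute winning on $[0,1]^d$ for all sufficiently small $\gamma$.
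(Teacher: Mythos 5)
Your overall architecture matches the paper's: reduce to the first coordinate via the product structure, show that the time-$n$ bad set restricted to a ball inside a cylinder is contained in a single slab normal to the $x_1$-axis whose width is controlled by $\inf|\sfT_{1,n,1}'|$ and the Lipschitz constant of $g_n$ (this is the paper's Lemma~\ref{lemmaconint}, not an analogue of Lemma~\ref{keystr}, which you have conflated with it), and then have Alice delete the boundedly many slabs arising from a block of times $[m_k,m_k+N]$ over several game rounds, with separate bookkeeping under $(\mathrm{A}')$ and $(\mathrm{B}')$.

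The genuine gap is in the step ``the geometric decay of slab widths ensures the total fits within Alice's round-$i$ allowance $\gamma|B_i|/2$.'' In the hyperplane absolute game Alice removes exactly one slab per move, so a small \emph{total} width of the collection buys nothing: the slabs sit in pairwise disjoint cylinders spread across $B_i$ and cannot be covered by a single neighborhood of width $\gamma|B_i|/2$. The naive fix --- one slab per round for $N$ rounds --- also fails quantitatively: to delete a slab at round $i+j$ its width must be at most $\gamma^{j+1}|B_i|/2$, so handling $N$ slabs sequentially forces widths of order $\gamma^{N}|B_i|$, whereas the dynamics only supplies widths of order $(\inf|\sfT_{i,N,1}'|)^{-1}|B_i|\gtrsim\eta^{-N/N_0}|B_i|$ by \eqref{unifoexpad}; for small $\gamma$ one has $\gamma^{N}\ll\eta^{-N/N_0}$ and the calibration collapses. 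The missing idea is precisely the content of the paper's Lemma~\ref{keystr}: at each round either a definite fraction of the target hyperplanes is already $\frac{\gamma}{4}|B_k|$-far from Bob's ball, or a fraction $\gamma/(4+6\gamma)$ of them cluster inside one interval of length $\gamma|B_k|/2$, which Alice removes in a single move; iterating kills all $N$ slabs in $s_1=\lfloor\log_{1/\epsilon}N\rfloor+1$ rounds, and it is this logarithmic (rather than linear) round count that makes the constants close. Two smaller omissions: under $(\mathrm{B}')$ a point whose first coordinate lies on a cylinder boundary is not captured by the slab lemma, and the paper must separately discard the countable set $E$ of endpoints using Proposition~\ref{proofhypdi}(iii); and $\bigcap_i B_i$ need not be a single point, so the reduction to $\rho_k\to 0$ has to be justified rather than assumed.
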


In the end, we present a simple application to normality in the $Q$-Cantor setting.

\begin{example}
    Let $Q=\{q_n\}_{n\in\N}\subseteq\N\cap[2,+\infty)$ and let $T_n(x):=q_nx\ \tmod\ 1$ for any $x\in[0,1]$ and $n\in\N$. This defines a non-autonomous system $([0,1],\cT)$ called $Q$-Cantor system, where $\cT:=\{T_n\}_{n\in\N}$. A real number $x\in[0,1]$ is said to be \emph{$Q$-distribution normal} if the sequence $\{\sfT_{1,n}(x)\}_{n\in\N}$ is uniformly distributed in $[0,1]$; that is
    \begin{equation*}
        \lim_{N\to\infty}\frac{\#\{1\leq n\leq N:\sfT_{1,n}(x)\in(a,b)\}}{N}=b-a,\qquad\forall\ 0\leq a<b\leq 1.
    \end{equation*}
    Let $\sfN$ be the set of $x\in[0,1]$ that is \emph{not} $Q$-distribution normal. As noted in the Remark 1.11 of \cite{MR2999070}, the Lebesgue measure of $\sfN$ is zero. Next, we are concerned about the winning property of $\sfN$. Observe that it is related to the twisted non-recurrent set via
    \begin{equation*}
        \big\{x\in[0,1]:\liminf_{n\to\infty}|\sfT_{1,n}(x)-y|>0\big\}\subseteq\sfN,\quad\forall\ y\in[0,1].
    \end{equation*}
    Then, the Theorem 1.1 of \cite{MR2644378} regarding lacunary sequences implies that $\sfN$ is winning on the support of an absolutely decaying measure in $[0,1]$. In terms of intersections with self-conformal sets, it implies that $\dimH \sfN\cap K=\dimH K$ for any self-conformal set $K\subseteq[0,1]$ that satisfies the open set condition. Since $q_n\in\N$ for any $n\in\N$, the sequence $\{T_n\}_{n\in\N}$ thereby satisfies the assumption $(\mathrm{B}')$.  Then, our Theorem \ref{onedimnonaut} together with Theorem \ref{conformalhyper}  allows us to remove the open set condition, therefore strengthening the previous conclusion. For further research on the normality in the $Q$-Cantor setting,  we refer to \cite{MR4127855,MR3345187,MR3705133,MR2827169,MR3325427,MR3274409,MR2999070} and references therein.
\end{example}

\subsection{Organizations}
The remaining parts of the paper are organized as follows. In Section \ref{defofselfcon}, we introduce the basics of self-conformal sets. The proof of Theorem \ref{conformalhyper} is then given in Section \ref{proofselfcon}. The Section \ref{UPCEM} is devoted to basic properties of uniformly piecewise $C^{1+\kappa}$ expanding maps. In Section \ref{pfofthemainnon}, we present the proof of Theorem \ref{onedimnonaut}.

\section{self-conformal sets and basic facts}\label{defofselfcon}
In this section, we review the basics of self-conformal sets in preparation for the proof of Theorem \ref{conformalhyper}. Throughout, let $|\cdot|$ denote the Euclidean norm (namely $L^2$ norm) on $\R^d$.

\begin{definition}\label{defofconset}
   Let $m\in\N\cap[2,+\infty]$ and $\kappa>0$. We say that $\Phi=\{\varphi_i\}_{i=1}^m$ is a \emph{$C^{1+\kappa}$ self-conformal IFS} on $\R^d$ if there exists a connected bounded open set $\Omega\subseteq\R^d$ such that
   \begin{itemize}
       \item[(i)]  $\varphi_i:\Omega\to\R^d$ is $C^1$ for any $1\leq i\leq m$.
       \medskip

       \item[(ii)] $\varphi_i$ is conformal on $\Omega$ for any $1\leq i\leq m$; that is $\varphi_i'(\bfx)\neq0$ for any $\bfx\in\Omega$ and
       \begin{equation*}
|\varphi_i'(\bfx)\bfy|=\|\varphi_i'(\bfx)\|\cdot|\bfy|,\qquad\forall\ \bfx\in\Omega,\ \forall \ \bfy\in\R^d,\ \forall \ 1\leq i\leq m.
       \end{equation*}
       Here, $f'(\bfx)$ is the Jacobian matrix and $\|A\|:=\sup_{|\bfx|\leq 1}|A\bfx|$ for a $d\times d$ matrix $A$.\medskip

       \item[(iii)] there exists $C>0$ such that
       \begin{equation*}
           \big|\|\varphi_i'(\bfx)\|-\|\varphi_i'(\bfy)\|\big|\leq C|\bfx-\bfy|^{\kappa},\qquad\forall \ \bfx,\bfy\in\Omega,\ \forall \ 1\leq i\leq m.\medskip
       \end{equation*}

       \item[(iv)] for any $1\leq i\leq m$, $\varphi_i$ is contractive and
       \begin{equation*}
           0<\inf_{\bfx\in\Omega}\|\varphi'(\bfx)\|\leq \sup_{\bfy\in\Omega}\|\varphi'(\bfx)\|<1.
       \end{equation*}
   \end{itemize}
   It is well-known (Hutchinson \cite{hutchinson1981}) that there exists unique compact set $K\subseteq\Omega$, called \emph{self-conformal set}, such that
   \begin{equation*}
       K=\bigcup_{i=1}^m\varphi_i(K).
   \end{equation*}
\end{definition}
\begin{remark}\label{conforremark}
    Let $\Omega\subseteq\R^d$ be a connected open set. We review the rigidity of conformal maps on $\Omega$. In the case $d=1$, a map $f:\Omega\to\R$ is conformal if and only if $f\in C^1(\Omega)$ and $f'(x)\neq0$ for all $x\in\Omega$. In the two dimensional case, by viewing $\R^2$ as the complex plane $\bC$, an injective map $f:\Omega\to \bC$ is conformal if and only if $f$ is holomorphic (or anti-holomorphic) on $\Omega$. 
When $d\geq3$, by Liouville's theorem (see \cite[Section 3.8]{gehring2017}), a map $f:\Omega\to\R^d$ is conformal if and only if it is a restriction to $\Omega$ of a  Möbius transformation on $\overline{\R}^d:=\R^d\cup\{\infty\}$, that is
\[
    f(\bfx)=\mathbf{b}+\frac{c}{|\bfx-\mathbf{a}|^{\epsilon}}\cdot A(\bfx-\mathbf{a}),
\]
where $\mathbf{a},\mathbf{b}\in\R^d$, $c\in\R$, $\epsilon\in\{0,2\}$ and $A$ is a $d\times d$ orthogonal matrix. In the proof of Theorem \ref{conformalhyper}, we only use the fact that a conformal map $f:\Omega\to\R^d$ is real-analytic in the case $d\geq2$.
\end{remark}

The following separation conditions are useful in the study of self-conformal sets.

\begin{definition}\label{defofosc}
    Let $\Phi=\{\varphi_i\}_{i=1}^m$ be a $C^{1+\kappa}$ self-conformal IFS on $\R^d$ and $K\subseteq\R^d$ be the associated self-conformal set. We say that $\Phi$ satisfies \emph{strong separation condition} (SSC) if $\varphi_i(K)\cap\varphi_j(K)=\emptyset$ for any $1\leq i\neq j\leq m$, and \emph{open set condition} (OSC) if there exists an open set $O\subseteq\Omega$ such that 
    \begin{equation*}
        \varphi_i(O)\subseteq O,\quad\varphi_i(O)\cap\varphi_j(O)=\emptyset
    \end{equation*}
    for any $1\leq i\neq j\leq m$.
\end{definition}

\noindent It is clear that  SSC implies OSC. Under the open set condition, it is well-known that the restricted Hausdorff measure $\cH^{s}|_K$ is $s$-Ahlfors regular, where $s:=\dimH K$. Recall that a self-conformal set in $\R^d$ is irreducible if it is not contained in a $(d-1)$-dimensional real-analytic submanifold of $\R^d$. As noted in Remark~\ref{existresosc}, an irreducible self-conformal set $K\subseteq\R^d$ that satisfies the open set condition is the support of an absolutely decaying measure. This together with the Proposition 5.1 of \cite{MR2981929} implies that such $K$ is hyperplane diffuse.
% \begin{definition}
%     We say that a self-conformal set $K\subseteq\R^d$ is \emph{irreducible} if it is not contained in any $(d-1)$-dimensional real-analytic submanifold of $\R^d$.
% \end{definition}
% \begin{remark}\label{irrerek}
% Let $K\subseteq\R^d$ be an irreducible self-conformal set that satisfies the open set condition.  As noted in Remark~\ref{existresosc}, $K$ is the support of an absolutely decaying measure. This together with the Proposition 5.1 of \cite{MR2981929} implies that $K$ is hyperplane diffuse.
% Then  $K$ is irreducible if and only if
% \begin{itemize}
%     \item[(i)]  $K$ is not a single point when $d=1$.
%     \medskip

%     \item[(ii)] $K$ is not contained in a finite disjoint union of analytic curves when $d=2$; see Proposition 3.1 (iii) of \cite{huang2025}. Here, we say that a subset $\Gamma\subseteq\R^2$ is an analytic curve if $\Gamma=f([0,1]\times\{0\})$ for some conformal map $f:\mathcal{O}\to\R^2$ where $\mathcal{O}\subseteq\R^2$ is an open set containing $[0,1]\times\{0\}$.
%     \medskip

%     \item[(iii)] $K$ is not contained in a $(d-1)$-dimensional affine hyperplane or $(d-1)$-dimensional sphere when $d\geq3$; see Proposition 3.1 (i)-(ii) of \cite{huang2025}.
% \end{itemize}
% \end{remark}

We end up the section with the following basic properties of self-conformal sets. Let $\Sigma=\{1,2,...,m\}$. For any $n\in\N$, denote $\Sigma^n$ the set of words of length $n$ over $\Sigma$. The notations $\Sigma^*$ and $\Sigma^{\N}$ stand for, respectively, the collection of all finite words and infinite words over $\Sigma$. Given a finite word $I=i_1i_2\cdots i_n\in\Sigma^{*}$, write $\varphi_I:=\varphi_{i_1}\circ\varphi_{i_2}\circ\cdots\varphi_{i_n}$ and $K_I:=\varphi_I(K)$.

\begin{lemma}\label{basiclem}
    Let $K\subseteq\R^d$ be a self-conformal set generated by a $C^{1+\kappa}$ self-conformal IFS $\Phi=\{\varphi_i\}_{i=1}^m$ on $\R^d$. Then
    \begin{itemize}
        \item[(i)] there exists a constant $C>1$ such that
        \begin{eqnarray}\label{boundeddis}
            C^{-1}\|\varphi_I'\|\leq\|\varphi_I'(\bfx)\|\leq C\|\varphi_I'\|,\qquad I\in\Sigma^*,\ \forall \ \bfx\in \Omega,
        \end{eqnarray}
        where $\|f'\|:=\sup_{\bfx\in\Omega}\|f'(\bfx)\|$.
        \medskip

        \item[(ii)] there exist a bounded open set $U\subseteq\Omega$ and a constant $C>1$ such that
        \begin{eqnarray*}
            \overline{U}\subseteq\Omega,\quad\varphi_i(U)\subseteq U,\quad\forall \ i=1,2,...,m
        \end{eqnarray*}
        and
        \begin{equation}\label{uniformbilip}
            C^{-1}\|\varphi_I'\|\cdot|\bfx-\bfy|\leq|\varphi_I(\bfx)-\varphi_I(\bfy)|\leq C\|\varphi_I'\|\cdot|\bfx-\bfy|
        \end{equation}
        for any $I\in\Sigma^*$ and $\bfx$, $\bfy\in U$.\medskip
 
        \item[(iii)] there exists a constant $C>1$ such that
        \begin{equation}\label{diamofki}
            C^{-1}\|\varphi_I'\|\leq |K_I|\leq C\|\varphi_I'\|,\qquad\forall \ I\in\Sigma^*.
        \end{equation}
    \end{itemize}
\end{lemma}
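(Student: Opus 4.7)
The three bullets of this lemma together constitute the classical bounded-distortion package for a $C^{1+\kappa}$ conformal IFS. My plan is to establish (i) directly from conformality combined with the Hölder hypothesis (iii) of Definition 2.1 and the uniform contraction (iv), then to deduce (ii) by choosing $U$ to be a small invariant open neighborhood of $K$ inside $\Omega$ and integrating $\varphi_I'$ along short paths, and finally to obtain (iii) as an immediate consequence of (ii) specialized to points of $K$.

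For (i), I exploit the fact that each $\varphi_i'(\bfz)$ is a scalar multiple of an orthogonal matrix, so the operator norm is multiplicative under composition. By the chain rule,
\[
\|\varphi_I'(\bfx)\| \;=\; \prod_{k=1}^n \|\varphi_{i_k}'(\bfz_k(\bfx))\|,\qquad \bfz_k(\bfx) := \varphi_{i_{k+1}}\circ\cdots\circ\varphi_{i_n}(\bfx),
\]
with the convention $\bfz_n(\bfx)=\bfx$. I then compare $\log\|\varphi_I'(\bfx)\|$ with $\log\|\varphi_I'(\bfy)\|$ through the telescoping sum $\sum_{k=1}^n\bigl(\log\|\varphi_{i_k}'(\bfz_k(\bfx))\|-\log\|\varphi_{i_k}'(\bfz_k(\bfy))\|\bigr)$. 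Condition (iii) of Definition 2.1, combined with the uniform upper and strictly positive lower bounds from (iv), upgrades to a $\kappa$-Hölder estimate on $\log\|\varphi_i'\|$, with a constant independent of $i$. Since (iv) also supplies a uniform contraction ratio $\lambda\in(0,1)$, one has $|\bfz_k(\bfx)-\bfz_k(\bfy)|\leq\lambda^{n-k}\,\mathrm{diam}(\Omega)$, so the telescoping sum is dominated by the convergent geometric series $C'\sum_{k=0}^\infty\lambda^{\kappa k}$, uniformly in $I$ and in $\bfx,\bfy\in\Omega$. Exponentiating, and choosing $\bfy$ to realize (approximately) the supremum $\|\varphi_I'\|$, gives (i).

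For (ii), compactness of $K$ inside the open set $\Omega$ yields $\epsilon_0>0$ with $\overline{(K)_{\epsilon_0}}\subseteq\Omega$. Since each $\varphi_i$ has contraction ratio at most $\lambda<1$ and $\varphi_i(K)\subseteq K$, I can shrink $\epsilon_0$ so that $\varphi_i((K)_{\epsilon_0})\subseteq(K)_{\lambda\epsilon_0}\subseteq(K)_{\epsilon_0}$ for every $i$; then $U:=(K)_{\epsilon_0}$ is invariant. The upper bi-Lipschitz bound follows from integrating $\varphi_I'$ along a short path joining $\bfx,\bfy\in U$ and invoking (i); the matching lower bound comes from applying the same reasoning to the conformal inverse branch $\varphi_I^{-1}$ on $\varphi_I(U)$, whose derivative satisfies $\|(\varphi_I^{-1})'\|\asymp\|\varphi_I'\|^{-1}$ by (i). Statement (iii) is then immediate: specialize the bi-Lipschitz estimate to $\bfx,\bfy\in K\subseteq U$ and use $0<|K|<+\infty$. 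The main technical obstacle in (ii) is the path-based mean value argument, since a straight segment joining two points of $U$ need not lie in $U$. The standard remedy is to replace $U$ by a bounded convex neighborhood of $K$ contained in $\Omega$ (automatic when $\Omega$ itself is convex, as in most concrete examples) or, in dimensions $\geq 2$, to invoke Koebe-type distortion estimates that follow from the Möbius rigidity of conformal maps recorded in Remark 2.1; either choice gives the desired uniform constant.
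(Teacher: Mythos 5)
Your proposal supplies an actual argument where the paper supplies none: the paper disposes of (i) and (ii) by citing Lemma 2.1 and Lemma 2.2 of the reference \cite{MR1479016} and then observes that (iii) follows from (ii), so any self-contained proof is by definition a different route. Your treatment of (i) is the standard and correct telescoping argument: conformality makes the operator norm multiplicative along the chain rule, the H\"older hypothesis of Definition \ref{defofconset}(iii) together with the positive lower bound on $\|\varphi_i'\|$ gives a $\kappa$-H\"older bound on $\log\|\varphi_i'\|$, and the uniform contraction turns the telescoping sum into a convergent geometric series. (Two small caveats: the uniformity of the lower bound on $\inf_\Omega\|\varphi_i'\|$ over $i$ is only automatic when $m<\infty$, since Definition \ref{defofconset}(iv) is stated per map; and your deduction of (iii) needs $|K|>0$, which fails only in the degenerate single-point case that the paper implicitly excludes.)

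The one place where your argument has a real gap is the bi-Lipschitz estimate \eqref{uniformbilip} on a non-convex $\Omega$, and you have correctly identified it but not closed it. A bounded \emph{convex} neighborhood of $K$ contained in $\Omega$ need not exist (take $K$ spread around a non-convex $\Omega$ whose convex hull leaves $\Omega$), so that remedy is not general. The upper bound can still be rescued without convexity: $\overline U$ is compact in the open connected set $\Omega$, hence quasi-convex in $\Omega$ --- points at distance less than $\epsilon_0$ are joined by the straight segment inside $(K)_{2\epsilon_0}$, and points farther apart by a path of uniformly bounded length, giving a path of length $\lesssim|\bfx-\bfy|$ in either case. The lower bound, however, genuinely requires conformal-specific input: integrating $(\varphi_I^{-1})'$ along the segment joining $\varphi_I(\bfx)$ to $\varphi_I(\bfy)$ fails because that segment need not lie in $\varphi_I(U)$, and the fix (monotonicity in $d=1$, Koebe distortion in $d=2$, Liouville/M\"obius rigidity as in Remark \ref{conforremark} for $d\geq3$) is asserted in your sketch rather than carried out. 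Since the paper outsources exactly these points to \cite{MR1479016}, your proposal is an acceptable outline at essentially the same level of detail, but as a standalone proof the lower bound in (ii) remains the unproved step.
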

\begin{proof}
    The statements (i)-(ii) follows respectively from Lemma 2.1 and 2.2 of \cite{MR1479016}. The statement (iii) is a direct consequence of (ii).
\end{proof}

\section{Proof of Theorem \ref{conformalhyper}}\label{proofselfcon}

In view of the Corollary~\ref{dimhscapkgeqkia}, the Theorem \ref{conformalhyper} is a direct consequence of the following result. 

\begin{proposition}\label{subsetirr}
    Let $d$ be a positive integer and let $K\subseteq\R^d$ be an irreducible self-conformal set. Let $s:=\dimH K$. Then for any $0<\epsilon<s$, there exists $K'\subseteq K$ that satisfies the following statements:
    \begin{itemize}
        \item[(i)] $K'$ is a hyperplane diffuse set generated by a $C^{1+\kappa}$ conformal IFS that satisfies the strong separation condition.
        \medskip

        \item[(ii)] the lower Assouad dimension of $K'$ is greater than $s-\epsilon$.
    \end{itemize}
\end{proposition}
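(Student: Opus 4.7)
The plan is to reduce to an ``irreducible Falconer construction'' and then deduce both conclusions from it. In one direction, suppose we have produced a self-conformal subset $K'\subseteq K$ whose generating $C^{1+\kappa}$ conformal IFS satisfies the strong separation condition (SSC), with $\dimH K'>s-\epsilon$, and with $K'$ itself irreducible. Then (ii) is automatic, because SSC makes $\mathcal{H}^{s'}|_{K'}$ (with $s':=\dimH K'$) $s'$-Ahlfors regular, whence $\lodimA K'=s'>s-\epsilon$. The hyperplane-diffuseness part of (i) then follows from Theorem 1.6 of \cite{MR4283274} (the restricted Hausdorff measure on an irreducible self-conformal set with OSC is absolutely decaying), combined with Proposition 5.1 of \cite{MR2981929} (supports of absolutely decaying measures are hyperplane diffuse). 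Thus the task reduces to constructing such an irreducible $K'$.

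For the construction, I would begin with Falconer's classical device \cite[Proof of Theorem 4]{MR969315} applied to the IFS generating $K$: for any $\eta>0$ there exist a level $n\in\N$ and a subfamily $\mathcal{I}_0\subseteq\Sigma^n$ such that the sub-IFS $\{\varphi_I:I\in\mathcal{I}_0\}$ satisfies SSC and its attractor $K_{\mathcal{I}_0}$ has Hausdorff dimension greater than $s-\eta$. Setting $\eta<\epsilon/2$, the only property still in question is irreducibility. To enforce it I would adjoin a bounded number of extra cylinder words $J_1,\dots,J_L$ at some deeper level $N\gg n$ chosen so that: (a) the images $K_{J_k}$ lie inside a small open ball $B_0\subseteq\R^d\setminus\bigcup_{I\in\mathcal{I}_0}K_I$ and are pairwise disjoint, preserving SSC; (b) the union $K_{J_1}\cup\cdots\cup K_{J_L}$ is not contained in any $(d-1)$-dimensional real-analytic submanifold of $\R^d$; (c) the $J_k$'s contribute negligibly to the Bowen pressure, so that the enlarged attractor $K'$ still has dimension greater than $s-\epsilon$. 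The crucial input for (b) is the pull-back observation: since each $\varphi_J$ is conformal, and therefore real-analytic (by Liouville's theorem when $d\geq 2$, trivially when $d=1$), a containment $\varphi_J(K)\subseteq M$ would pull back to $K\subseteq\varphi_J^{-1}(M)$, contradicting irreducibility of $K$. Hence every cylinder set $\varphi_J(K)$ is itself irreducible, and by localizing this inside $B_0$ and passing to a finite sub-family one secures (b). The diameter bound \eqref{diamofki} and the bi-Lipschitz control \eqref{uniformbilip} from Lemma \ref{basiclem} let us place the $K_{J_k}$ inside $B_0$ by choosing $N$ large, while the standard Bowen pressure formula lets us absorb the contribution of finitely many deep words into a dimension loss of at most $\epsilon/2$. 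The enlarged sub-IFS then has attractor $K'$ containing $K_{J_1}\cup\cdots\cup K_{J_L}$; hence $K'$ is irreducible, satisfies SSC, and has $\dimH K'>s-\epsilon$.

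The main obstacle is step (b): ensuring that a finite collection of small cylinders can jointly preclude containment in any $(d-1)$-dimensional real-analytic submanifold. A priori this sounds like a transversality condition not obviously enforceable at deep levels. My approach would exploit that, by the pull-back argument above, even a \emph{single} cylinder $\varphi_J(K)$ already fails to lie in any $(d-1)$-dimensional real-analytic submanifold; one therefore needs only to keep a faithful ``copy'' of some such $\varphi_J(K)$ visible inside the enlarged attractor, which is accomplished by selecting a sufficiently rich finite sub-family of descendants of $J$ (this sub-family inheriting irreducibility from $K_J$ by a second application of the pull-back argument). Combining this with Falconer's backbone yields the desired $K'$.
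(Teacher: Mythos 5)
Your reduction is the same as the paper's: build a sub-IFS of $K$ satisfying the strong separation condition whose attractor has dimension $>s-\epsilon$ and is irreducible, then get Ahlfors regularity (hence (ii)) from SSC and hyperplane diffuseness from \cite[Theorem 1.6]{MR4283274} together with \cite[Proposition 5.1]{MR2981929}. The problem is in how you propose to secure irreducibility of the sub-attractor, and there is a genuine gap there. Your ``pull-back observation'' is correct as far as it goes: since $\varphi_J$ is conformal, hence a real-analytic diffeomorphism onto its image, a containment $\varphi_J(K)\subseteq M$ pulls back to $K\subseteq\varphi_J^{-1}(M)$, so every \emph{full} cylinder $\varphi_J(K)$ is irreducible. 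But the enlarged IFS $\{\varphi_I\}_{I\in\mathcal{I}_0}\cup\{\varphi_{J_1},\dots,\varphi_{J_L}\}$ has attractor $K'$ satisfying $K'=\bigcup\varphi_I(K')$, so $K'$ contains only $\varphi_{J_k}(K')$, not $\varphi_{J_k}(K)$ --- your assertion that $K'$ ``contains $K_{J_1}\cup\cdots\cup K_{J_L}$'' is false. Consequently no ``faithful copy'' of $\varphi_J(K)$ survives inside $K'$, and the claim that a finite sub-family of descendants of $J$ ``inherits irreducibility from $K_J$ by a second application of the pull-back argument'' is exactly the statement you are trying to prove: the pull-back argument transports irreducibility of $K$ to its cylinder \emph{images}, but it gives no information about the attractor of a proper sub-IFS, which is a much thinner set. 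A priori such a sub-attractor (e.g.\ one generated by very few deep words) can perfectly well sit inside a real-analytic hypersurface, so your step (b) is unsupported and your argument is circular at the decisive point.

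The paper closes this gap with a construction and an argument that are both absent from your proposal. It takes $\cI_r$ to be a \emph{maximal} pairwise-disjoint subfamily of the cut-set $\Lambda_r=\{I:|K_I|<r\le|K_{I^-}|\}$; maximality forces the sub-attractor $K_r$ to be $2r$-dense in $K$. Irreducibility of $K_r$ for small $r$ is then proved by contradiction via a blow-up/compactness argument: if $K_{r_n}\subseteq M_n$ with $r_n\to0$, one localizes $M_n$ near a point $\bfy_n$ using Taylor's formula, rescales by $\|\varphi'_{J_n|_N}\|^{-1}$, uses the $2r_n$-density to show $\psi_n\circ\varphi_{J_n|_N}(K)$ lies in a thin neighborhood of the tangent hyperplane, and extracts limits (a normal-families statement for conformal maps, plus local compactness of $A(d,d-1)$) to conclude $K\subseteq f^{-1}(V\cap f(U))$ for a conformal, hence real-analytic, $f$ --- contradicting irreducibility of $K$. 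The density of the sub-attractor in $K$ is essential to this argument, and your construction (a Falconer backbone plus finitely many extra cylinders confined to one small ball $B_0$) does not provide it. To repair your approach you would either need to prove that some explicit finite sub-IFS has an irreducible attractor (which is the hard part) or switch to a density-based selection as in the paper.
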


Before the proof, we introduce various  notions that will be used. 
     For any $k=1,...,d-1$, let $G(d,k)$ be the collection of all $k$-dimensional subspaces in $\R^d$ and let $A(d,k)$ be the collection of all $k$-dimensional affine hyperplanes in $\R^d$. They are related via     \begin{equation*}
         A(d,k)=\left\{V+y:V\in G(d,k),\,y\in V^{\perp}\right\},
   \end{equation*}
   where $V^{\perp}$ denotes the  orthogonal complement of $V\in G(d,k)$.
    Note that for any $V\in G(d,k)$ there exists a unique linear operator $P_V:\R^d\to\R^d$ such that $P_V|_V:V\to V$ is an identity map on $V$ and $P_V(V^{\perp})=\{\mathbf{0}\}$.   The distance between $V_1+y_1$ and $V_2+y_2$, where $V_1,V_2\in G(d,k)$ and $y_1,y_2\in V^{\perp}$, is defined as
     \begin{equation*}
         \bfd(V_1+y_1,V_2+y_2):=\|P_{V_1}-P_{V_2}\|+|y_1-y_2|.
     \end{equation*}
    This makes $A(d,k)$ a locally compact metric space. Next, we review the basic notion of the tangent space coming from differential geometry. Let $M\subseteq \R^d$ be a $\ell$-dimensional smooth submanifold and $\bfx\in M$. Let $\varphi:U\to \varphi(U)$ be a diffeomorphism and $\psi:\varphi(U)\to U$ be its inverse, where $U\subseteq M$ is an open set relative to $M$ containing $\bfx$ and $\varphi(U)$ is an open set in $\R^{\ell}$. The \emph{tangent space} of $M$ at $\bfx$ is defined as
    \begin{equation*}
        T_{\bfx} M:=\left\{\psi'(\varphi(\bfx))\bfy:\bfy\in\R^{\ell}\right\}.
    \end{equation*}
    It is clear that $T_{\bfx}M$ is a linear subspace of $\R^d$ independent of the choice of $\varphi$.

\begin{proof}[Proof of Proposition \ref{subsetirr}]
     For any $r>0$, let 
     \begin{equation*}
         \Lambda_r:=\big\{I\in\Sigma^*:|K_I|<r\leq |K_{I^-}|\big\}
     \end{equation*}
     and let $\cI_r$ to be a subset of $\Lambda_r$ that satisfies the following statements:
     \begin{itemize}
         \item $K_I\cap K_J=\emptyset$ for any $I\neq J\in\cI_r$.
         \medskip
         \item if $\cI\supseteq\cI_r$ is another subset of $\Lambda_r$ that satisfies the above property with $\cI_r$ replaced by $\cI$, then $\cI=\cI_r$.
     \end{itemize}
     Note that the choice of such $\cI_r$ may not be unique. Let $\Phi_r:=\{\varphi_I\}_{I\in\cI_r}$ and let $K_r$ be its associated self-conformal set.  By the definition of $\cI_r$, the set $K_r$ is $2r$-dense in $K$. This means  that for any $\bfx\in K$, there exists $\bfy\in K_r$ such that $|\bfx-\bfy|\leq 2r$. We next verify  the statements (i)-(ii) of the lemma for $K_r$.\medskip

     \noindent $\bullet$ \emph{Proving that (i) is true for $K_r$ when $r$ is sufficiently small.} By the construction of $\cI_r$, it is clear that $\Phi_r$ satisfies the strong separation condition. The proof of the irreducible part is divided into two cases.
     \medskip

     \emph{Case 1:  $d=1$}. Recall that $K$ is irreducible, then there exists $r_1>0$ such that $\#\cI_r\geq2$ for any $0<r<r_1$.  Since $\Phi_r$ satisfies the strong separation condition, it  follows that $K_r$ is not a single point. In the one-dimensional case, this is equivalent to the  irreducibility of $K_r$.
     \medskip

     \emph{Case 2: $d\geq2$}. Suppose in contradiction that there exists a sequence of positive numbers $\{r_n\}_{n\in\N}$ such that $r_n\to 0$ as $n\to\infty$ and $K_{r_n}$ is not irreducible for any $n\in\N$. Then, for any $n\in\N$, $K_{r_n}\subseteq M_n$ where $M_n$ is a $(d-1)$-dimensional real-analytic submanifold in $\R^d$.  Fix $\bfx_n\in K_{r_n}$ and let $J_n=I_{1,n}I_{2,n}\cdots$ be the infinite word in $\cI_{r_n}^{\N}$ such that $\pi_n(J_n)=\bfx_n$, where $\pi_n:\cI_{r_n}^{\N}\to K_{r_n}$ denotes the coding map associated with $\Phi_{r_n}$. For any $n\in\N$, observe that
     \begin{itemize}
         \item[(a)] by Taylor's formula, for any $\epsilon>0$, there exists $\delta=\delta(\epsilon,n)>0$ such that 
         \begin{equation*}
             B(\bfy_n,\delta')\cap M_n\subseteq(\bfy_n+T_{\bfy_n}M_n)_{\epsilon\delta'},\qquad\forall \ 0<\delta'\leq \delta,
         \end{equation*}
          where $\bfy_n:=\varphi_{J_n|_N}(\bfx_n)$ for any $n\in\N$.\medskip

          \item[(b)] by combining Lemma \ref{basiclem},  the above (a)  and the fact that $K_{r_n}$ is $(2{r_n})$-dense in $K$, there exist a constant $C_1>1$ (independent of $n\in\N$) and an integer $N=N(n)\in\N$  such that
          \begin{eqnarray*}
              \varphi_{J_n|_N}(K)&\subseteq&\varphi_{J_n|_N}\big((K_{r_n})_{2r_n}\big)\\[4pt]
              &\subseteq&\big(\varphi_{J_{n|N}}(K_{r_n})\big)_{\frac{1}{2}C_1r_n\|\varphi'_{J_n|_N}\|}\\[4pt]
              &\subseteq&\Big(B\big(\bfy_n,C_1\|\varphi'_{J_n|_N}\|\big)\Big)_{\frac{1}{2}C_1r_n\|\varphi'_{J_n|_N}\|}\\[4pt]
&\subseteq&\big(\bfy_n+T_{\bfy_n}M_n\big)_{C_1r_n\|\varphi_{J_n|_N}'\|}\,.
          \end{eqnarray*}
     \end{itemize}
     For any $n\in\N$, define $$\psi_n(\bfz):=\bfy_n+\|\varphi_{J_n|_N}'\|^{-1}(\bfz-\bfy_n),\qquad\forall \ \bfz\in\R^d.$$
    Since $\psi_n$ fixes $\bfy_n$ and it is a similarity mapping, it follows that $\psi_n$ fixes the affine hyperplane $\bfy_n+T_{\bfy_n}M_n$. This together with the above observation (b) implies that
     \begin{equation}\label{psijnphijnk}
         \psi_n\circ\varphi_{J_n|N}(K)\subseteq\big(\bfy_n+T_{\bfy_n}M_n\big)_{C_1r_n}\qquad\forall \ n\in\N.
     \end{equation}
     Note that
     \begin{itemize}
         \item[(c)] the sequence of maps $\{\psi_n\circ\varphi_{J_n|_N}\}_{n\in\N}$ is uniformly bounded on $U$ and there exists $C_2>1$ such that
         \begin{equation*}
             C_2^{-1}|\bfx-\bfy|\leq|\psi_n\circ\varphi_{J_n|_N}(\bfx)-\psi_n\circ\varphi_{J_n|_N}(\bfy)|\leq C_2|\bfx-\bfy|
         \end{equation*}
         for any $\bfx,\bfy\in U$, where $U$ is the open set associated with \eqref{uniformbilip}. Then by \cite[Lemma 2.1]{huang2025}, we may assume without loss of generality that $\psi_n\circ\varphi_{J_n|_N}\to f$ uniformly on $U$ for some conformal map $f:U\to\R^d$.
         \medskip

         \item[(d)] the sequence $\{\bfy_n+T_{\bfy_n}M_n\}_{n\in\N}$ is bounded in $A(d,d-1)$. Since $A(d,d-1)$ is locally compact, by passing to a subsequence if necessary, we may assume that $\bfy_n+T_{\bfy_n}M_n\to V$ for some $V\in A(d,d-1)$.\medskip 
     \end{itemize}
     Let $f:U\to\R^d$ be the conformal map as in (c) and let $V\in A(d,d-1)$ be as in (d). Then, by \eqref{psijnphijnk},  we have $f(K)\subseteq(V)_{\epsilon}$ for any $\epsilon>0$ and thus $f(K)\subseteq V$. It follows that 
     \begin{equation}\label{containana}
         K\subseteq f^{-1}(V\cap f(U)).
     \end{equation}
     Recall (see the Remark~\ref{conforremark}) that any conformal map on an open set in $\R^d$ is real-analytic in the case $d\geq2$. Therefore, the inclusion \eqref{containana} yields a contradiction that $K$ is contained in a $(d-1)$-dimensional real-analytic submanifold. Consequently, there exists $r_2>0$ such that $K_r$ is irreducible and thus hyperplane diffuse for any $0<r<r_2$.
 % As noted in Remark~\ref{irrerek}, $K_r$ is hyperplane diffuse for such $r$.
     \medskip

     From the above two cases we conclude that (i) is true for $K_r$ when $0<r<r_3$, where $r_3:=\min\{r_1,r_2\}$.
     \medskip
     % Let $r_2>0$ such that $(K)_{2r_2}\subseteq\Omega$ and fix $0<r<r_2$. Suppose in contradiction that $K_r$ is not irreducible, then by Remark \ref{irrerek} (ii), $K_r$ is contained in a finite disjoint union of analytic curves, say $\Gamma_1$, $\Gamma_2$,..., $\Gamma_k$. For any $i=1,2,...,k$, write $\Gamma_i=f_i([0,1]\times\{0\})$, where $f_i:\cO_i\to\R^2$ is a conformal map on an open set $\cO_i\supseteq[0,1]\times\{0\}$.  By definition, each $\Gamma_i$ $(i=1,2,...,k)$ is closed. Since they are disjoint, it follows that
     % \begin{equation*}
     %     \min_{i\neq j\in\{1,2,...,k\}}\dist(\Gamma_i,\Gamma_j)>0.
     % \end{equation*}
     % This guarantees the existence of  $J\in\cI_r^*$ and $i_0\in\{1,2,...,k\}$ such that 
     % \begin{equation*}
     %     \varphi_J(K_r)\subseteq \Gamma_{i_0},\qquad \varphi_J(K)\subseteq\varphi_J((K_r)_{2r})\subseteq f_{i_0}(\cO_{i_0}).
     % \end{equation*}

     \noindent $\bullet$ \emph{Proving that for any $0<\epsilon<\dimH K$ there exists $r_0=r_0(\epsilon)>0$ such that $(ii)$ is true for $K_r$ whenever $0<r<r_0$.}  The statement is implicit in the proof of Theorem 4 in Falconer's paper \cite{MR969315}. For the sake of completeness, we include a proof here. Let $0<\epsilon<\dimH K=:s$. Then $s-\epsilon<\underline\dim_{\mathrm{B}} K$ since $\dimH K\leq\underline\dim_{\mathrm{B}} K$, here $\underline\dim_{\mathrm{B}} E$ denotes the lower box dimension of the bounded set $E\subseteq\R^d$; see Section 3.1 of \cite{MR2118797} for its definition. Let $C_3,C_4,C_5>1$ be the constants associated with \eqref{boundeddis}-\eqref{diamofki} respectively. By the construction of $\cI_r$ and the definition of $\underline\dim_{\mathrm{B}} K$, there exists $r_0=r_0(\epsilon)>0$ such that
     \begin{equation}\label{lowbdofcir}
         \#\cI_r>\Big(C_3\cdot C_5\cdot \big(\min_{1\leq i\leq m}\|\varphi_i'\| \big)^{-1}\Big)^{s-\epsilon}\cdot r^{-(s-\epsilon)}
     \end{equation}
     for any $0<r<r_0$. Fix $0<r<r_0$ and define a Borel probability measure $\mu$ on $K_r$ such that $\mu(\varphi_J(K_r))=(\#\cI_r)^{-n}$ for any $n\in\N$ and $J\in\cI_r^n$. This measure is well-defined since $\{\varphi_I\}_{I\in\cI_r}$ satisfies the strong separation condition. Let $$\delta:=\inf\{|\varphi_{I_1}(\bfx)-\varphi_{I_2}(\bfy)|:I_1\neq I_2\in\cI_r,\,\bfx,\bfy\in K_r\}>0.$$
     Then, for any $n\in\N$, $J_1\neq J_2\in\cI_r^n$ and $\bfx,\bfy\in K_r$, by \eqref{uniformbilip} and \eqref{diamofki}, we have
     \begin{equation}\label{DISTJKr}
        |\varphi_{J_1}(\bfx)-\varphi_{J_2}(\bfy)|\geq C_3^{-1}C_4^{-1}\delta\cdot \big(C_3^{-1}C_5^{-1}\min_{1\leq i\leq m}\|\varphi_i'\|\cdot r\big)^n=:c_1\cdot (c_2\cdot r)^n,
     \end{equation}
     where $c_1:=C_3^{-1}C_4^{-1}\delta$ and $c_2:=C_3^{-1}C_5^{-1}\min_{1\leq i\leq m}\|\varphi_i'\|$. Let $n\in\N$ and let $B$ be a subset in $\R^d$  such that
     \begin{equation*}
         c_1\cdot(c_2\cdot r)^{n+1}<|B|\leq c_1\cdot(c_2\cdot r)^{n}.
     \end{equation*}
     By \eqref{DISTJKr}, the subset $B$  intersects at most one element in $\{\varphi_J(K_r)\}_{J\in\cI_r^n}$. This together with \eqref{lowbdofcir} implies that
     \begin{equation*}
         \mu(B)\leq\frac{1}{(\#\cI_r)^n}\leq c_2^{n(s-\epsilon)}\cdot r^{n(s-\epsilon)}\leq (c_1c_2r)^{-(s-\epsilon)}\cdot|B|^{s-\epsilon}.
     \end{equation*}
     It follows that $\dimH K_r\geq s-\epsilon$ by Mass Distribution Principle (see Section 4.1 of \cite{MR2118797}). Since $\Phi_r$ satisfies the strong separation condition, $\lodimA K_r=\dimH K\geq s-\epsilon$.
     \medskip

     The above discussions show that for any $\epsilon>0$, the statement (i) and (ii) of the lemma is satisfied simultaneously for $K_r$ when $r$ is sufficiently small. The proof is thereby complete. 
\end{proof}

\section{Basic properties of Uniformly  Piecewise $C^{1+\kappa}$ expanding maps}\label{UPCEM}

The following result extends the well-known bounded distortion  properties of piecewise $C^{1+\kappa}$  expanding maps on (see \cite[Section 3.1]{mane2012} ) to  the corresponding non-autonomous systems.

\begin{lemma}\label{bounddis}
Let $\cT=\{T_n\}_{n\in\N}$ be a sequence of uniformly piecewise $C^{1+\kappa}$  expanding maps on $[0,1]$.    Then there exists $C>1$ such that
    \begin{eqnarray}\label{bdddisnonat}
        C^{-1}\leq\frac{\big|\big(\sfT_{1,n}\big)'(x)\big|}{\big|\big(\sfT_{1,n}\big)'(y)\big|}\leq C,\quad\forall\ n\in\N,\ \forall\ \bfu\in\Sigma(1,n),\ \forall\ x,y\in I_{\bfu}(1,n).
    \end{eqnarray}
\end{lemma}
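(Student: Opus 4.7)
The plan is to transfer the classical bounded distortion argument for a single piecewise $C^{1+\kappa}$ expanding map to the non-autonomous composition $\sfT_{1,n}=T_n\circ\cdots\circ T_1$. The starting point is to take logarithms and apply the chain rule: for $x,y\in I_{\bfu}(1,n)$ with $\bfu=u_1\cdots u_n$, we have $\sfT_{1,k}(x),\sfT_{1,k}(y)\in I_{u_{k+1}}(k+1)$ for every $0\leq k\leq n-1$, so
\begin{equation*}
\log\frac{|(\sfT_{1,n})'(x)|}{|(\sfT_{1,n})'(y)|}
=\sum_{k=0}^{n-1}\Big(\log|T_{k+1}'(\sfT_{1,k}(x))|-\log|T_{k+1}'(\sfT_{1,k}(y))|\Big).
\end{equation*}
The task reduces to bounding this sum uniformly in $n$, $\bfu$, $x$ and $y$.

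The next step is to estimate each summand. Since $|T_{k+1}'|\geq 1$ on $I_{u_{k+1}}(k+1)$, the mean value theorem applied to $\log$ on $[1,\infty)$ gives $|\log|T_{k+1}'(a)|-\log|T_{k+1}'(b)||\leq \big||T_{k+1}'(a)|-|T_{k+1}'(b)|\big|$ for $a,b\in I_{u_{k+1}}(k+1)$. Combining this with the uniform $C^{1+\kappa}$ estimate \eqref{c1kappalln} yields
\begin{equation*}
\big|\log|T_{k+1}'(\sfT_{1,k}(x))|-\log|T_{k+1}'(\sfT_{1,k}(y))|\big|
\leq C\,|\sfT_{1,k}(x)-\sfT_{1,k}(y)|^{\kappa}.
\end{equation*}
Hence everything comes down to controlling the diameters $|\sfT_{1,k}(x)-\sfT_{1,k}(y)|$, and this is where the uniform expansion hypothesis \eqref{unifoexpad} enters.

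The main geometric input is the following block-expansion estimate, which is really the only non-trivial part of the argument. For any integer $m\geq 0$ and any word $\bfv\in\Sigma(k+1,mN_0)$, the composition $\sfT_{k+1,mN_0}$ maps $I_{\bfv}(k+1,mN_0)$ into $[0,1]$, and iterating \eqref{unifoexpad} in blocks of length $N_0$ (together with $|T_n'|\geq 1$) gives $|(\sfT_{k+1,mN_0})'|\geq \eta^{m}$ on $I_{\bfv}(k+1,mN_0)$. Consequently $|I_{\bfv}(k+1,mN_0)|\leq \eta^{-m}$. Applying this to $\bfv=u_{k+1}\cdots u_{k+mN_0}$ with $m=\lfloor(n-k)/N_0\rfloor$, and using that $\sfT_{1,k}(x),\sfT_{1,k}(y)$ both lie in $I_{u_{k+1}\cdots u_n}(k+1,n-k)\subseteq I_{\bfv}(k+1,mN_0)$, we obtain
\begin{equation*}
|\sfT_{1,k}(x)-\sfT_{1,k}(y)|\leq \eta^{-\lfloor(n-k)/N_0\rfloor}\leq \eta\cdot\big(\eta^{1/N_0}\big)^{-(n-k)}.
\end{equation*}
Plugging this back into the summand bound produces a geometric series in $(n-k)$ with ratio $\eta^{-\kappa/N_0}<1$, whose sum is bounded by a constant $C'$ independent of $n$, $\bfu$, $x$ and $y$. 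Setting $C:=e^{C'}$ then yields \eqref{bdddisnonat}.

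The only delicate point, and the step I expect to require the most care, is the block-contraction estimate above: unlike the autonomous setting, a single iterate of $T_k$ may have derivative only $\geq 1$, so expansion is visible only after composing $N_0$ consecutive maps. The grouping into blocks of length $N_0$, combined with the uniformity of $\eta$ and $N_0$ across $k$ guaranteed by hypothesis (iii), is what converts the pointwise $C^{1+\kappa}$ estimate into a summable geometric series and closes the argument.
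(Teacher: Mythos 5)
Your proof is correct and follows essentially the same route as the paper: take logarithms, telescope via the chain rule, bound each term by the $C^{1+\kappa}$ Hölder estimate, and sum a geometric series coming from the exponential decay of cylinder diameters. The only difference is that you explicitly justify the decay $|I_{\bfv}(i,n)|\lesssim\rho^{n}$ by the block-of-$N_0$ argument, a step the paper simply asserts as a consequence of \eqref{unifoexpad}.
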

\begin{proof}
Recall that $|T_n'(x)|\geq1$ for all $n\in\N$ and $x\in\bigcup_{i\in\Sigma_n}I_i(n)$. Throughout, fix $n\in\N$, $\bfu\in\Sigma(1,n)$ and $x,y\in I_{\bfu}(1,n)$.  Then, by \eqref{c1kappalln} and the inequality $|\log a-\log b|\leq |a-b|$ for all $a,b\geq1$, there exist $C_1>0$ and $\kappa>0$ such that
\begin{eqnarray}
\left|\log\frac{\big|\big(\sfT_{1,n}\big)'(x)\big|}{\big|\big(\sfT_{1,n}\big)'(y)\big|}\right|&\leq&\sum_{i=1}^n\Big|\log\big|T_{i}'\big(\sfT_{1,i-1}(x)\big)\big|-\log\big|T_{i}'\big(\sfT_{1,i-1}(y)\big)\big|\Big|\nonumber\\
&\leq&\sum_{i=1}^n\Big|\big|T_{i}'\big(\sfT_{1,i-1}(x)\big)\big|-\big|T_{i}'\big(\sfT_{1,i-1}(y)\big)\big|\Big|\nonumber\\
&\leq&\sum_{i=1}^n\big|T_{i}'\big(\sfT_{1,i-1}(x)\big)-T_{i}'\big(\sfT_{1,i-1}(y)\big)\big|\nonumber\\
&\leq& C_1\sum_{i=1}^n|\sfT_{1,i-1}(x)-\sfT_{1,i-1}(y)|^{\kappa}\nonumber\\
&\leq& C_1\sum_{i=1}^n\sup\big\{|I_{\bfv}(i,n-i+1)|^{\kappa}:\bfv\in\Sigma(i,n-i+1)\big\}.\label{logt1nxt1ny}
\end{eqnarray}
By the uniformly expanding property \eqref{unifoexpad}, there exist $C_2>0$ and $\rho\in(0,1)$ such that
\begin{equation*}
    |I_{\bfv}(i,n)|\leq C_2\rho^n,\qquad \forall \ i\in\N,\ \forall\ n\in\N, \ \forall \ \bfv\in\Sigma(i,n).
\end{equation*}
On combining this with \eqref{logt1nxt1ny}, we have
\begin{eqnarray*}
    \left|\log\frac{\big|\big(\sfT_{1,n}\big)'(x)\big|}{\big|\big(\sfT_{1,n}\big)'(y)\big|}\right|\leq C_1C_2\sum_{i=1}^n\rho^{i}\leq\frac{C_1C_2\rho}{1-\rho}<+\infty
\end{eqnarray*}
and thus
\begin{eqnarray*}
    \frac{\big|\big(\sfT_{1,n}\big)'(x)\big|}{\big|\big(\sfT_{1,n}\big)'(y)\big|}\leq e^{\frac{C_1C_2\rho}{1-\rho}}.
\end{eqnarray*}
Exchanging the place of $x$ and $y$ in the above process gives the corresponding lower bound. This completes the proof.
\end{proof}
\vspace{1ex}

In the case $T_n(I_i(n))=(0,1)$ for all $n\in\N$ and $i\in\Sigma_n$, the following  result relates the diameter of $I_{\bfu}(i,N)$ to the derivative of the map $\sfT_{i,N}$ on the interval $I_{\bfu}(i,N)$.

\begin{lemma}\label{bdlem}
Let $\cT=\{T_n\}_{n\in\N}$ be a sequence of uniformly piecewise $C^{1+\kappa}$  expanding maps on $[0,1]$  such that $T_n(I_k(n))=(0,1)$ for all $n\in\N$ and $k\in\Sigma_n$. Let $C>1$ be the constant as in Lemma \ref{bounddis}. Then
\begin{eqnarray}\label{diamofiu}
        C^{-1}\leq|I_{\bfu}(i,N)|\cdot \big|\big(\sfT_{i,N}\big)'(x)\big| \leq C,\quad\forall\ i,N\in\N,\  \bfu\in\Sigma(i,N),\ \forall\ x\in I_{\bfu}(i,N).
    \end{eqnarray}
    As a consequence,  we have
    \begin{eqnarray}\label{diaeofiuiv}
     C^{-3}\leq\frac{|I_{\bfu\bfv}(i,N_1+N_2)|}{|I_{\bfu}(i,N_1)|\cdot|I_{\bfv}(i+N_1,N_2)|}\leq C^3
    \end{eqnarray}
    for any $i,N_1,N_2\in\N$, any $\bfu\in\Sigma(i,N_1)$ and any $\bfv\in\Sigma(i+N_1,N_2)$.
\end{lemma}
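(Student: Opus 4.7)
The plan is to derive (4.4) from the Mean Value Theorem together with the full-branch surjectivity hypothesis, and then to bootstrap (4.4) via the chain rule to obtain (4.5). Neither step should present a serious obstacle; the main subtlety is only making sure Lemma 4.1, as stated, is applied with the correct starting index.

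For (4.4), the first observation is that under the hypothesis $T_n(I_k(n))=(0,1)$ for every $n\in\N$ and $k\in\Sigma_n$, the composition $\sfT_{i,N}$ restricts to a strictly monotone $C^1$ bijection from the cylinder $I_{\bfu}(i,N)$ onto $(0,1)$. This is a quick induction on $N$: the base case $N=1$ is immediate from the hypothesis, and the inductive step uses the definition \eqref{IuiN} together with the fact that $T_{i+N-1}$ maps $I_{u_{i+N-1}}(i+N-1)$ onto $(0,1)$. The Mean Value Theorem then furnishes some $\xi\in I_{\bfu}(i,N)$ with $|(\sfT_{i,N})'(\xi)|\cdot|I_{\bfu}(i,N)|=1$. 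Since the hypotheses defining a sequence of uniformly piecewise $C^{1+\kappa}$ expanding maps are translation-invariant in the starting index, the proof of Lemma 4.1 applies verbatim to $\sfT_{i,N}$ (in place of $\sfT_{1,n}$) with the same constant $C$, yielding $|(\sfT_{i,N})'(x)|/|(\sfT_{i,N})'(\xi)|\in[C^{-1},C]$ for every $x\in I_{\bfu}(i,N)$. Combining these two facts gives \eqref{diamofiu}.

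For \eqref{diaeofiuiv}, I would fix any $x\in I_{\bfu\bfv}(i,N_1+N_2)$ and set $y:=\sfT_{i,N_1}(x)$, noting that $I_{\bfu\bfv}(i,N_1+N_2)\subseteq I_{\bfu}(i,N_1)$ and $y\in I_{\bfv}(i+N_1,N_2)$ by the definition \eqref{IuiN} of the cylinders. Applying \eqref{diamofiu} three times — to the cylinders $I_{\bfu\bfv}(i,N_1+N_2)$ at $x$, $I_{\bfu}(i,N_1)$ at $x$, and $I_{\bfv}(i+N_1,N_2)$ at $y$ — produces three two-sided bounds whose product of derivatives can be collapsed using the chain rule
\[
(\sfT_{i,N_1+N_2})'(x)=(\sfT_{i+N_1,N_2})'(y)\cdot(\sfT_{i,N_1})'(x).
\]
Isolating the ratio $|I_{\bfu\bfv}(i,N_1+N_2)|/\bigl(|I_{\bfu}(i,N_1)|\cdot|I_{\bfv}(i+N_1,N_2)|\bigr)$ then yields the bound $[C^{-3},C^3]$ after a short accounting of the three factors of $C$. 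The entire argument is routine, and the only place one must be careful is in invoking Lemma 4.1 at a shifted starting index, which is legitimate because the hypotheses on $\cT$ are uniform in $n$.
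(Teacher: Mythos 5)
Your proof is correct and follows essentially the same route as the paper: the paper also combines the Mean Value Theorem (applied to the inverse branch $f$ of $\sfT_{i,N}|_{I_{\bfu}(i,N)}$ rather than to $\sfT_{i,N}$ itself, an immaterial difference) with the bounded distortion of Lemma \ref{bounddis} to get \eqref{diamofiu}, and then deduces \eqref{diaeofiuiv} from \eqref{diamofiu}, bounded distortion and the chain rule exactly as you do. Your explicit remark that Lemma \ref{bounddis} must be invoked at a shifted starting index $i$ (legitimate by the uniformity of the hypotheses) is a point the paper glosses over.
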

\begin{proof}
    Fix $i,N\in\N$, $\bfu\in\Sigma(i,N)$ and $x\in I_{\bfu}(i,N)$.  The assumption $T_n(I_k(n))=(0,1)$ for all $n\in\N$ and $k\in\Sigma_n$ ensures the existence of a monotonic $C^{1+\kappa}$ map $f$ on $[0,1]$ such that $f|_{(0,1)}$ is the inverse of $\sfT_{i,N}|_{I_{\bfu}(i,N)}$. Let  $C>1$ be the constant as in Lemma \ref{bounddis}. Then, by the Lemma \ref{bounddis} and the Mean Value Theorem,
    \begin{eqnarray*}
    \begin{aligned}
        |I_{\bfu}(i,N)|&=|f(1)-f(0)|
        \geq\inf_{y\in[0,1]}|f'(y)|\\
        &=\Big(\sup_{y\in I_{\bfu}(i,N)}\big|\big(\sfT_{i,N}\big)'(y)\Big)^{-1}
        \geq\big(C\cdot\big|\big(\sfT_{i,N}\big)'(x)\big|\big)^{-1}.
    \end{aligned}
    \end{eqnarray*}
    This gives the lower bound of the inequality \eqref{diamofiu}. The corresponding upper bound is derived in a similar way. In the end, the inequality \eqref{diaeofiuiv} follows directly from the Lemma \ref{bounddis}, the inequality \eqref{diamofiu} and the chain rule.
\end{proof}

\section{Proof of Theorem \ref{onedimnonaut}}\label{pfofthemainnon}

This section focuses on proving Theorem \ref{onedimnonaut}. Before doing this, we first establish  a strategy for Alice to let Bob avoid given hyperplanes while playing the $\gamma$-hyperplane absolute game. For any subsets $A,B\subset\R^d$, define the distance between $A$ and $B$ as
 \begin{equation*}
     \dist(A,B):=\inf\left\{\|\bfx-\bfy\|:\bfx\in A,\,\bfy\in B\right\},
 \end{equation*}
where $\|\cdot\|$ denotes the maximal norm in $\R^d$.

\begin{lemma}\label{keystr}
    Fix $\gamma\in(0,1/3)$ and let Bob and Alice play $\gamma$-hyperplane absolute game on $[0,1]^d$. Let $\epsilon>0$ be the constant depending on $\gamma$ defined as
    \begin{eqnarray}\label{defofepsi}
         \epsilon=\epsilon(\gamma):=1-\frac{\gamma}{4+6\gamma}\in(0,1)\,.
    \end{eqnarray}
    Let $k\in\N$ and suppose that Bob chooses a closed ball $B_k\subseteq[0,1]^d$ at the $k$-th step. Then for any positive integer $N\in\N$, $j_0\in\{1,...,d\}$ and $y_1,...,y_N\in[0,1]$,  Alice has a strategy to ensure that
    \begin{equation}\label{distpro}
         \#\left\{1\leq i\leq N:\dist(\cL_i,B_{k+1})>\frac{\gamma}{4}|B_k|\right\}\geq\uinte{(1-\epsilon)N}
    \end{equation}
    whatever Bob chooses $B_{k+1}$ at the $(k+1)$-th step, where $\cL_i$'s are the hyperplanes
    \begin{eqnarray}\label{defofcli}
           \cL_i:=\left\{\bfx=(x_1,...,x_d)\in\R^d:x_{j_0}=y_i\right\}\qquad\forall\ i=1,2,...,N.
    \end{eqnarray}
\end{lemma}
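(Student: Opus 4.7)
\emph{Proof plan.} The strategy is to reduce the $d$-dimensional game to a one-dimensional pigeonhole argument on the $x_{j_0}$-axis. Writing $\pi_{j_0}(x_1,\ldots,x_d) := x_{j_0}$, each hyperplane $\cL_i$ in \eqref{defofcli} satisfies $\dist(\cL_i, B) = \dist(y_i, \pi_{j_0}(B))$ for any $B\subseteq\R^d$; and because $B_k$ is a hypercube in the maximum norm, its projection $J := \pi_{j_0}(B_k)$ is a closed interval of length $L := |B_k|$, which I write as $J = [a, a + L]$. The task therefore reduces to selecting $c \in J$ and a width $\rho \in (0, \gamma L/2]$ so that, whatever subinterval $J_{k+1} := \pi_{j_0}(B_{k+1}) \subseteq J \setminus (c - \rho, c + \rho)$ of length $\geq \gamma L$ Bob picks in response, at least $\lceil (1 - \epsilon(\gamma)) N \rceil$ of the $y_i$ lie at distance $>\gamma L/4$ from $J_{k+1}$.

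I would have Alice play the perpendicular hyperplane $\cL^A := \{\bfx \in \R^d : x_{j_0} = c\}$ with the maximal allowed width $\rho := \gamma L / 2$. A short distance computation identifies two automatic sources of \emph{good} indices (meaning $\dist(y_i, J_{k+1}) > \gamma L/4$), independently of Bob's response:
\begin{itemize}
\item[(a)] if $|y_i - c| < \gamma L/4$, then $\dist(y_i, J_{k+1}) \geq \rho - |y_i - c| > \gamma L/4$;
\item[(b)] if $\dist(y_i, J) > \gamma L/4$, then trivially $\dist(y_i, J_{k+1}) \geq \dist(y_i, J) > \gamma L/4$.
\end{itemize}
Write $N_{\mathrm{near}} := \#\{i : y_i \in (J)_{\gamma L/4}\}$ for the indices \emph{not} handled by (b); the task reduces to choosing $c$ so that the open safe zone $(c - \gamma L/4, c + \gamma L/4)$ in (a) captures many of these $N_{\mathrm{near}}$ indices.

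I would locate the anchor $c$ by pigeonhole. Set $K := \lceil 2/\gamma + 1 \rceil$ and take equally-spaced points $c_j := a + (j-1)L/(K-1)$ for $j = 1, \ldots, K$. The choice of $K$ guarantees the spacing $L/(K-1) \leq \gamma L/2$, so the closed intervals $I_j := [c_j - \gamma L/4, c_j + \gamma L/4]$ overlap and cover the enlarged interval $(J)_{\gamma L/4}$. Hence some $I_{j^*}$ contains at least $N_{\mathrm{near}}/K$ of the $y_i$ lying in $(J)_{\gamma L/4}$, and Alice plays $c := c_{j^*}$. Combining sources (a) and (b) yields
\begin{equation*}
    \#\big\{1 \leq i \leq N : \dist(\cL_i, B_{k+1}) > \tfrac{\gamma}{4}|B_k|\big\}\ \geq\ \tfrac{N_{\mathrm{near}}}{K} + (N - N_{\mathrm{near}})\ \geq\ \tfrac{N}{K}.
\end{equation*}
Since $K \leq 2/\gamma + 2 \leq 4/\gamma + 6 = (4 + 6\gamma)/\gamma$, the right-hand side is at least $N\gamma/(4 + 6\gamma) = (1 - \epsilon(\gamma))N$, and since the left-hand side is an integer it follows that it exceeds $\lceil (1 - \epsilon(\gamma))N \rceil$, as required.

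The main technical point I expect is the strict inequality ``$>\gamma L/4$'' in the good condition, which can fail for a $y_i$ lying exactly on the safe-zone boundary $|y_i - c| = \gamma L/4$ when Bob places $J_{k+1}$ flush with that boundary. The fix is a generic perturbation: only finitely many values of $c \in J$ (of the form $y_i \pm \gamma L/4$) are forbidden, so Alice may shift $c_{j^*}$ by an arbitrarily small amount to avoid all of them while preserving the $y_i$-count inside the open interior $I_{j^*}^{\circ}$. The factor-of-two slack between $N/K$ and $N\gamma/(4 + 6\gamma)$ comfortably absorbs any residual boundary loss, and it also accommodates the harmless degenerate case that one of the extreme anchors $c_1 = a$ or $c_K = a + L$ sits on $\partial J$ (which simply forces Bob to one side of Alice's strip and does not affect the analysis).
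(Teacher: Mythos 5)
Your proposal is correct and takes essentially the same route as the paper: both project to the $x_{j_0}$-coordinate, cover a small neighborhood of the projected interval $\pi_{j_0}(B_k)$ by roughly $(4+6\gamma)/\gamma$ subintervals of length $\gamma|B_k|/2$, pigeonhole to find one containing at least $(1-\epsilon)N$ of the $y_i$ (the remaining indices being good automatically because they already lie far from $B_k$), and have Alice delete the corresponding strip. The boundary-strictness issue you flag is real but harmless exactly as you claim --- your factor-of-two slack absorbs it once the perturbation direction is chosen so as to retain the larger half of the boundary points --- and the paper's own proof in fact silently elides the same ``$\geq$'' versus ``$>$'' distinction.
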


% \begin{lemma}\label{keystr}
%     Let $\gamma\in(0,1/3)$ and let Bob and Alice play $\gamma$-absolute game on $[0,1]$. Let
%    \begin{eqnarray}\label{defofepsi}
%     \epsilon=\epsilon(\gamma)=1-\frac{\gamma}{4+6\gamma}\in(0,1).
% \end{eqnarray}
% Let $k\in\N$ and suppose that Bob select a closed ball $B_k\subseteq[0,1]$ at the $k$-th step.
%     Then  for any $N\in\N$ and any points $y_1,y_2,\cdot\cdot\cdot,y_N\in[0,1]$,   Alice has a strategy to ensure that
%     \begin{eqnarray}\label{distpro}
%         \#\left\{1\leq i\leq N: \dist(y_i,B_{k+1})>\frac{\gamma}{2}|B_k| \right\}\geq\uinte{(1-\epsilon) N}
%     \end{eqnarray}
%     whatever Bob chooses $B_{k+1}$ at the $(k+1)$-th step.
% \end{lemma}

\begin{proof}
    We only prove the lemma for the case $j_0=1$;  other cases are similar.  Suppose that Bob chooses a closed ball  $B_k$ at the $k$-th step of $\gamma$-hyperplane absolute game on $[0,1]^N$  and let $y_1,...,y_N\in[0,1]$. In the following, we will find a strategy for Alice to guarantee the validity of \eqref{distpro}.
    % Write
    % \begin{equation*}
    %     B_k=[a_1,b_1]\times\cdot\cdot\cdot\times[a_d,b_d].
    % \end{equation*}
    We divide the proof into the following cases.
    \medskip

    \emph{Case 1: $\#\left\{1\leq i\leq N: \dist(\cL_i,B_{k})>\gamma|B_k|/2\right\}\geq \uinte{N/2}.$}
In this case, \eqref{distpro} is satisfied automatically with $\epsilon=1/2$.
\medskip

\emph{Case 2: $\#\left\{1\leq i\leq N: \dist(\cL_i,B_{k})>\gamma|B_k|/2\right\}\leq\linte{N/2}.$} Write
\[
    B_k=[a_1,b_1]\times\cdot\cdot\cdot\times[a_d,b_d],
\]
where $b_i-a_i=|B_k|$ for any $i=1,...,d$. Then this case is  equivalent to 
\begin{eqnarray}\label{numofyi}
    \#\left\{1\leq i\leq N:y_i\in\Big[a_1-\frac{\gamma|B_k|}{2},b_1+\frac{\gamma|B_k|}{2}\Big]\right\}\geq\uinte{N/2}.
\end{eqnarray}
  Partition the interval $\Big[a_1-\frac{\gamma|B_k|}{2},b_1+\frac{\gamma|B_k|}{2}\Big]$ into $\uinte{(1+\gamma)/(\gamma/2)}$  closed subintervals with length at most $\gamma|B_k|/2$, then the inequality \eqref{numofyi} implies that one of these intervals contains  $$\frac{\uinte{N/2}}{\uinte{(1+\gamma)/(\gamma/2)}}$$ or more of the points $y_i$. In other words, there exists a closed subinterval $$I\subseteq\Big[a_1-\frac{\gamma|B_k|}{2},b_1+\frac{\gamma|B_k|}{2}\Big],\qquad |I|=\gamma|B_k|/2$$  such that
\begin{eqnarray}\label{numyinI}
\begin{aligned}
    \#\left\{1\leq i\leq N:y_i\in I\right\}&\geq \Big\lceil{\frac{\uinte{N/2}}{\uinte{(1+\gamma)/(\gamma/2)}}}\Big\rceil\geq\uinte{\frac{N/2}{\frac{1+\gamma}{\gamma/2}+1}}\\
    &=\uinte{\frac{\gamma\cdot N}{4+6\gamma}}=\uinte{(1-\epsilon)N},
\end{aligned}
\end{eqnarray}
where $\epsilon$ is defined as  in \eqref{defofepsi}.
Let $z\in I$ be the center of this interval and let $\cL$ be the hyperplane $\{\bfx\in\R^d:x_1=z\}$. It follows from \eqref{numyinI} that
\begin{eqnarray}\label{numcliincl}
    \#\{1\leq i\leq N:\cL_i\subseteq(\cL)_{\frac{\gamma|B_k|}{4}}\}=\#\{1\leq i\leq N:y_i\in I\}\geq\uinte{(1-\epsilon)N}.
\end{eqnarray}
Now, let Alice choose $A_k=(\cL)_{\frac{\gamma|B_k|}{2}}$. Then by \eqref{numcliincl}, we have 
\[
\#\left\{1\leq i\leq N: \dist(\cL_i,B)>\frac{\gamma}{4}|B_k| \right\}\geq\#\left\{1\leq i\leq N:\cL_i\in (\cL)_{\frac{\gamma|B_k|}{4}}\right\}\geq\uinte{(1-\epsilon)N}
\]
for any closed ball $B\subseteq B_k\setminus A_k$. This implies that the inequality \eqref{distpro} is satisfied 
whatever Bob chooses $B_{k+1}$ at the $(k+1)$-th step.
\medskip

On combining Case 1 and Case 2 with the easily verified inequality $\gamma/(4+6\gamma)<1/2$, we have proved that the inequality \eqref{distpro} is valid with $\epsilon$  defined as  in \eqref{defofepsi}.
\end{proof}

The following direct corollary of Lemma \ref{keystr}  is more convenient to use in  proving Theorem \ref{onedimnonaut}.

\begin{corollary}\label{corokeystra}
     Let $\gamma\in(0,1/3)$ and let Bob and Alice play $\gamma$-hyperplane absolute game on $[0,1]^d$.   Let $k\in\N$ and suppose that Bob selects a closed ball $B_k\subseteq[0,1]^d$ at the $k$-th step. Then
     \begin{itemize}
         \item[(i)] for any $y\in[0,1]$ and $j_0\in\{1,2,...,d\}$, Alice has a strategy to ensure that $$\{\bfx\in\R^d:x_{j_0}= y\}\cap B_{k+1}=\emptyset$$ whatever Bob chooses $B_{k+1}$ at the $(k+1)$-th step.
         \medskip

         \item[(ii)] for any $N\in\N$, any $j_0\in\{1,2,...,d\}$ and any intervals $I_1,I_2,...,I_N\subseteq[0,1]$ with length not greater than $\gamma^s|B_k|/2$, where $s=s(\gamma,N):=\linte{\log_{1/\epsilon}N}+1$ and $\epsilon=\epsilon(\gamma)$ is defined as in \eqref{defofepsi}, Alice has a strategy to ensure that
         \[
             B_{k+s}\ \cap \ \bigcup_{i=1}^N\,\{\bfx\in\R^d:x_{j_0}\in I_i\}=\emptyset
         \]
         whatever Bob chooses $B_{k+s}$ at the $(k+s)$-th step.
     \end{itemize}
\end{corollary}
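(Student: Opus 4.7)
The plan is to derive both parts from Lemma \ref{keystr} by choosing the hyperplane data appropriately and, for (ii), iterating the lemma over $s$ consecutive rounds.

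For part (i), I would simply apply Lemma \ref{keystr} with $N=1$ and $y_1=y$. That gives Alice a strategy for her $k$-th move so that, no matter how Bob picks $B_{k+1}$, the single hyperplane $\mathcal{L}_1=\{\mathbf{x}\in\R^d:x_{j_0}=y\}$ satisfies $\dist(\mathcal{L}_1,B_{k+1})>\gamma|B_k|/4>0$. In particular $\mathcal{L}_1\cap B_{k+1}=\emptyset$, which is the desired conclusion.

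For part (ii), the idea is that each interval $I_i$ defines a slab $\{\mathbf{x}:x_{j_0}\in I_i\}$ which, in the maximum norm, is exactly the $(|I_i|/2)$-neighborhood of its midpoint hyperplane $\mathcal{L}_i:=\{x_{j_0}=c_i\}$, where $c_i$ is the center of $I_i$. Hence the slab is disjoint from a ball $B$ as soon as $\dist(\mathcal{L}_i,B)>|I_i|/2$. I will run Lemma \ref{keystr} $s$ times, keeping track of the set of indices whose slab has not yet been eliminated. Set $\mathcal{I}_0:=\{1,\ldots,N\}$. At round $j\in\{0,1,\ldots,s-1\}$, apply Lemma \ref{keystr} with base ball $B_{k+j}$ and the hyperplanes $\{\mathcal{L}_i\}_{i\in\mathcal{I}_j}$ to obtain a strategy for Alice's move $A_{k+j}$ ensuring that, whatever Bob's choice of $B_{k+j+1}$ is, at least $\lceil(1-\epsilon)|\mathcal{I}_j|\rceil$ of those midpoints satisfy $\dist(\mathcal{L}_i,B_{k+j+1})>\gamma|B_{k+j}|/4$. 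Because $|B_{k+j}|\geq\gamma^{j}|B_k|$ and $|I_i|/2\leq\gamma^{s}|B_k|/4\leq\gamma^{j+1}|B_k|/4\leq\gamma|B_{k+j}|/4$ for every $j\leq s-1$, each such index $i$ has its slab already disjoint from $B_{k+j+1}$, and therefore from every subsequent nested ball $B_{k+\ell}$ with $\ell>j+1$.

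Letting $\mathcal{I}_{j+1}\subseteq\mathcal{I}_j$ be the indices not yet resolved at round $j$, we obtain $|\mathcal{I}_{j+1}|\leq\epsilon|\mathcal{I}_j|$, so inductively $|\mathcal{I}_s|\leq\epsilon^{s}N$. The choice $s=\lfloor\log_{1/\epsilon}N\rfloor+1$ guarantees $\epsilon^{s}N<1$, and since $|\mathcal{I}_s|$ is a non-negative integer, $\mathcal{I}_s=\emptyset$. Thus every slab has been eliminated by some intermediate step, and $B_{k+s}\cap\bigcup_{i=1}^{N}\{\mathbf{x}:x_{j_0}\in I_i\}=\emptyset$. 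The only non-routine step is verifying that the width bound $|I_i|\leq\gamma^{s}|B_k|/2$ remains compatible with the distance estimate delivered by Lemma \ref{keystr} at each round $j$; this is exactly where the exponent $s$ enters, since the ball diameters may shrink by a factor of $\gamma$ per round and we must still have $|I_i|/2\leq\gamma|B_{k+j}|/4$ for $j=0,1,\ldots,s-1$.
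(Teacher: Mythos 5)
Your proposal is correct and follows essentially the same route as the paper, which proves (i) by a single application of Lemma \ref{keystr} and (ii) by iterating it $s$ times on the midpoints of the intervals; your bookkeeping with the shrinking index sets $\mathcal{I}_j$ and the verification that $|I_i|/2\leq\gamma|B_{k+j}|/4$ for $j\leq s-1$ correctly fills in the details the paper leaves implicit.
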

\begin{proof}
    The statement (i) follows by directly applying Lemma \ref{keystr} to the  point $y$. The statement (ii) is obtained by repeatedly applying Lemma \ref{keystr}  to the points $y_1,...,y_N$ for $s$ times.
\end{proof}

We reiterate that in the setting of Theorem \ref{onedimnonaut}, $\cT=\{T_{n}\}_{n\in\N}=\{T_{n,1}\otimes T_{n_2}\}_{n\in\N}$ is a sequence of self-maps on $[0,1]^d$ such that $\cT_1=\{T_{n,1}\}_{n\in\N}$ is a sequence of uniformly $C^{1+\kappa}$ piecewise expanding maps on $[0,1]$ satisfying either assumption $(\mathrm{A}')$ or assumption $(\mathrm{B}')$ stated above Theorem \ref{onedimnonaut} and $\cT_2=\{T_{n,2}\}_{n\in\N}$ is a sequence of self-maps on $[0,1]^{d-1}$, and $\cG=\{g_n\}_{n\in\N}$ is a sequence of self-maps on $[0,1]^d$ that satisfy the uniform Lipschtz condition  \eqref{conslipgn}.  For any $n\in\N$, let $\Sigma_n$ and $\{I_i(n)\}_{i\in\Sigma_n}$ be the associated alphabet and disjoint open intervals associated with $T_{n,1}$ as stated in the Definition \ref{defofpiee} (i).  In turn, for any  $i\in\N$ and $N\in\N$, let $\Sigma(i,N)$ be defined as in \eqref{SigmaiN} and let $I_{\bfu}(i,N)$ be defined as in \eqref{IuiN} for any $\bfu\in\Sigma(i,N)$, with $\cT$ replaced by $\cT_1$. For any $i,N\in\N$, write
\begin{eqnarray*}
     \sfT_{i,N}&:=&T_{i+N-1}\circ T_{i+N-2}\circ\cdot\cdot\cdot\circ T_{i}\,,\\
     \sfT_{i,N,k}&:=&T_{i+N-1,k}\circ T_{i+N-2,k}\circ\cdot\cdot\cdot\circ T_{i,k}\,,\quad k=1,2.
\end{eqnarray*}
 Now, for any $\delta>0$ and any $n\in\N$, define
\begin{eqnarray*}
    \tN(\cT,\cG,\delta,n):=\left\{\bfx\in[0,1]^d:\|\sfT_{1,n}(\bfx)-g_n(\bfx)\|>\delta\right\}.
\end{eqnarray*}
Then the sets $\tN(\cT,\cG)$ and $\tN(\cT,\cG,\delta,n)$ are related via
$$\tN(\cT,\cG)=\bigcup_{\delta>0}\bigcup_{N=1}^{\infty}\bigcap_{n=N}^{\infty}\tN(\cT,\cG,\delta,n).$$
Throughout, for any ball $B\subseteq\R^d$ associated with the maximal norm, write
\[
    B=B_1\times B_2\times\cdot\cdot\cdot B_d,
\]
where $B_i\subseteq\R$ is an interval with length $|B|$ for any $i=1,2,...,d$. 
The next result concerns the local structure of the set $\tN(\cT,\cG,\delta,n)$.

\begin{lemma}\label{lemmaconint}
   Let $\cT$, $\cT_1$ and $\cG$ be defined as above. Let $C>0$ be the constant as in \eqref{conslipgn}. %Suppose that  $n\in\N$ and $\bfu\in\Sigma(1,n)$ satisfy
    % \begin{eqnarray}\label{lowder}
    %    \inf_{x\in I_{\bfu}(1,n)}\big|\big(\sfT_{\cF_1}(1,n)\big)'(x)\big|>C\,.
    % \end{eqnarray}
    Then, for any $n\in\N$, $\bfu\in\Sigma(1,n)$, $\delta>0$ and any closed ball $B\subseteq I_{\bfu}(1,n)\times[0,1]^{d-1}$, there exists an interval $J\subseteq\R$ with length
    $$\frac{2\delta+C|B|}{\inf_{x\in B_1}\big|\sfT_{1,n,1}'(x)\big|}$$ 
such that 
    \begin{eqnarray}\label{IcapN}
        B\cap \tN(\cT,\cG,\delta,n)^c\subseteq\{\bfx\in\R^d:x_1\in J\}.
    \end{eqnarray}
\end{lemma}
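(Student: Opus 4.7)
The plan is to exploit the product structure $T_k = T_{k,1} \otimes T_{k,2}$ together with the fact that the maximum norm separates coordinates, in order to reduce the condition defining $\tN(\cT,\cG,\delta,n)^c$ to a one-dimensional constraint on the first coordinate $x_1$. Once this is done, the Mean Value Theorem on the cylinder $I_{\bfu}(1,n)$ will let us pass from a bound on $\sfT_{1,n,1}(x_1) - \sfT_{1,n,1}(y_1)$ to a bound on $x_1 - y_1$.

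First I would observe that for any $\bfx \in \tN(\cT,\cG,\delta,n)^c$, the inequality $\|\sfT_{1,n}(\bfx) - g_n(\bfx)\| \leq \delta$ implies, by the definition of the maximum norm and the fact that the first coordinate of $\sfT_{1,n}(\bfx)$ equals $\sfT_{1,n,1}(x_1)$, that $|\sfT_{1,n,1}(x_1) - (g_n(\bfx))_1| \leq \delta$, where $(g_n(\bfx))_1$ denotes the first coordinate of $g_n(\bfx)$. Applying this to any two points $\bfx,\bfy \in B \cap \tN(\cT,\cG,\delta,n)^c$ and combining with the uniform Lipschitz hypothesis \eqref{conslipgn} via the triangle inequality yields
\begin{equation*}
    |\sfT_{1,n,1}(x_1) - \sfT_{1,n,1}(y_1)| \leq 2\delta + |(g_n(\bfx))_1 - (g_n(\bfy))_1| \leq 2\delta + C\|\bfx - \bfy\| \leq 2\delta + C|B|.
\end{equation*}

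Next, since $B \subseteq I_{\bfu}(1,n) \times [0,1]^{d-1}$ and $B$ factors as $B_1 \times \cdots \times B_d$ for intervals $B_i \subseteq \R$ of length $|B|$, both $x_1$ and $y_1$ lie in $B_1 \subseteq I_{\bfu}(1,n)$, a set on which $\sfT_{1,n,1}$ is $C^1$ and monotonic by the piecewise expanding hypothesis. The Mean Value Theorem then provides $\xi$ between $x_1$ and $y_1$, in particular $\xi \in B_1$, such that
\begin{equation*}
    |\sfT_{1,n,1}(x_1) - \sfT_{1,n,1}(y_1)| = |\sfT_{1,n,1}'(\xi)| \cdot |x_1-y_1| \geq \Big(\inf_{x \in B_1}|\sfT_{1,n,1}'(x)|\Big) \cdot |x_1-y_1|.
\end{equation*}
Combining with the previous display gives $|x_1 - y_1| \leq (2\delta + C|B|)/\inf_{x \in B_1}|\sfT_{1,n,1}'(x)|$.

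Hence the projection of $B \cap \tN(\cT,\cG,\delta,n)^c$ onto the first coordinate axis has diameter at most $(2\delta + C|B|)/\inf_{x \in B_1}|\sfT_{1,n,1}'(x)|$, and therefore fits inside a single interval $J \subseteq \R$ of exactly that length. This yields \eqref{IcapN} and completes the argument. I do not foresee a serious obstacle here: the maximum norm plus the product structure of $T_k$ trivially isolates the first-coordinate dynamics, while monotonicity and $C^1$ smoothness of $\sfT_{1,n,1}$ on the cylinder $I_{\bfu}(1,n)$ make the final one-dimensional inversion step completely routine.
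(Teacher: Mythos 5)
Your proposal is correct and follows essentially the same route as the paper's proof: both reduce to the first coordinate via the maximum norm and the product structure, apply the triangle inequality together with the Lipschitz bound \eqref{conslipgn} to control $|\sfT_{1,n,1}(x_1)-\sfT_{1,n,1}(y_1)|$ by $2\delta+C|B|$, and then invert via the Mean Value Theorem on $B_1\subseteq I_{\bfu}(1,n)$. The only cosmetic difference is that you project onto the first coordinate before applying the triangle inequality, whereas the paper does so afterwards.
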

\begin{proof}
   % Write $B=I_1\times I_2\times\cdot\cdot\cdot I_d$, where $I_j\subseteq[0,1]$ is an interval with length $|B|$ for any $j=1,2,...,d$.
   Let $\bfx,\bfy$ be two points belonging to 
 the set in the left-hand-side of  \eqref{IcapN}. Then by Mean Value Theorem, triangle inequality and \eqref{unilip}, we have
\begin{eqnarray*}
    2\delta&\geq&\big\|(\sfT_{1,n}(\bfx)-g_n(\bfx))-(\sfT_{1,n}(\bfy)-g_n(\bfy))\big\|\\[4pt]
    &\geq&\|\sfT_{1,n}(\bfx)-\sfT_{1,n}(\bfy)\|-\|g_n(\bfx)-g_n(\bfy)\|\\[4pt]
    &\geq&|\sfT_{1,n,1}(x_1)-\sfT_{1,n,1}(y_1)|-C|B|\\[4pt]
    &\geq&\inf_{z\in B_1}\big|\sfT_{1,n,1}'(z)\big|\cdot|x_1-y_1|-C|B|.
\end{eqnarray*}
It follows that
\[
   |x_1-y_1|\leq \frac{2\delta+C|B|}{\inf_{x\in B_1}\big|\sfT_{1,n,1}'(x)\big|}\,,\quad \forall\ \bfx,\bfy\in B\cap \tN(\cT,\cG,\delta,n)^c
\]
Thereby the proof is complete.
\end{proof}

Now we are going to prove Theorem \ref{onedimnonaut}  by considering separately assumption ($\mathrm{A}'$) and assumption ($\mathrm{B}'$) appearing in the statement of the theorem.
\subsection{Proof of Theorem \ref{onedimnonaut}: assuming ($\mathrm{A}'$)}Throughout the subsection, let us fix $\gamma\in(0,1/3)$. Our aim is proving that $\tN(\cT,\cG)$ is $\gamma$-hyperplane absolute winning on $[0,1]^d$. Before doing this, we introduce  some constants as follows:
\begin{itemize}
 \item $M:=\sup\left\{|T_{n,1}'(x)|:n\in\N,\,x\in\bigcup_{i\in\Sigma_n}I_i(n)\right\}$. By the assumption ($\mathrm{A}'$), we have $$\#\{I_i(n):n\in\N,i\in\Sigma_n\}<+\infty.$$  This together with  \eqref{c1kappalln} shows that  $M<+\infty$.
    \medskip

    \item $\epsilon=\epsilon(\gamma)\in(0,1)$ is the constant as in \eqref{defofepsi}.
    \medskip

    \item $C_1>0$ stands for the constant as in \eqref{conslipgn} associated with $\cG$.
    \medskip

    \item $C_2>1$ denotes the constant as in \eqref{bdddisnonat} associated with $\cT_1$.
    \medskip

    \item Fix $N\in\N$ so large such that
    \begin{eqnarray}\label{defofNA}
    \inf\Big\{\big|\sfT_{i,N,1}'(x)\big|:i\in\N,\,x\in\bigcup_{\bfu\in\Sigma(i,N)}I_{\bfu}(i,N)\Big\}>C_2M\cdot\gamma^{-s}.
    % \left\{
    % \begin{aligned}
    %     &\inf\Big\{\big|\big(\sfT_{\cF_1}(i,N)\big)'(x)\big|:i\in\N,\,x\in\bigcup_{\bfu\in\Sigma(i,N)}I_{\bfu}(i,N)\Big\}>C_2M\cdot\gamma^{-s},\\
    %     &\gamma^{-s}>2MC_1C_2,
    % \end{aligned}
    % \right.
    \end{eqnarray}
    Here $s:=2+s_1+s_2$, where  $s_1:=\linte{\log_{1/\epsilon}(N+1)}+1$ and $s_2\in\N$ satisfies 
    \begin{equation}\label{defofs2}
        \gamma^{s_2}<(2\gamma+C_1)^{-1}C_2^{-1}M^{-1}.
    \end{equation}
   The existence of $N$ follows from the inequality \eqref{unifoexpad}. 
\medskip

\item Let $\ell:=\min\left\{|I_{\bfu}(i,N)|:i\in\N,\,\bfu\in\Sigma(i,N),\,I_{\bfu}(i,N)\neq\emptyset\right\}$, where $N$ is the integer defined as above. The minimum is reached because the number of elements of the set in the definition of $\ell$  is finite while assuming ($\mathrm{A}'$). Also note that $|I_{\bfu}(i,N)|>0$ if $I_{\bfu}(i,N)\neq\emptyset$ since in this case $I_{\bfu}(i,N)$ is a non-empty open interval. It follows that $\ell>0$.
    \medskip

    \item Let $\rho_k$ denote the diameter of $B_k$ for every integer $k\geq1$, where $B_k\subseteq[0,1]^d$ is the closed ball that Bob chooses at the $k$-th step. Naturally, $\rho_k$ depends on Bob's strategy. Also,  the sequences $\{n_k\}_{k\in\N}$ and $\{m_k\}_{k\in\N}$ defined below are dependent on Bob's choices of balls.
    \medskip

    \item Define $n_0:=2$ and $n_k:=\inf\left\{n\in\N:\rho_n<\gamma^{ks}\rho_2\right\}$ for any integer 
 $k\geq1$. Here we adopt the convention that $\inf\emptyset=+\infty$. 
 \medskip

%  \item  For any integer $n\geq0$, write
% \begin{equation}
%     B_{n}=B_{n,1}\times B_{n,2}\times\cdot\cdot\cdot B_{n,d},
% \end{equation}
% where $B_{n,i}$ is a closed interval with length $|B_{n}|$ on $[0,1]$ for each $i=1,2,...,d$. 
\item Given an integer $k\geq0$,
 if $n_k<+\infty$, then let $$m_k:=\inf\left\{n\in\N:\sup_{x\in B_{n_k,1}\setminus E_n}\big|\sfT_{1,n,1}'(x)\big|>M^{-1}\gamma^{-(k+1)s}\right\},$$ where $E_n:=\bigcup_{\bfu\in\Sigma(1,n)}\partial I_{\bfu}(1,n)$. Here, $\partial E$ denotes the boundary of  $E$.
\medskip
   
    \item $\delta:=\gamma\rho_2$.
\end{itemize}
\medskip

Without loss of generality, throughout we assume that
\[
    \rho_k\to0\ (k\to\infty)\quad\text{and}\quad \rho_1<\ell\cdot\min\{\gamma^s,M^{-1}\}.
\]
 Then it follows that $n_k<+\infty$ for any $k\geq0$ and that $n_k\to\infty$ as $k\to\infty$. With this and inequality \eqref{unifoexpad} in mind, $m_k$ is well-defined and finite for any $k\geq0$. Moreover, it is easily verified that the sequence $\{m_k\}_{k\geq0}$ is increasing monotonically to infinity. In order to prove that $\tN(\cT,\cG)$ is $\gamma$-hyperplane absolute winning,   we make the subsequent claim.

\begin{claim}\label{finitecla}
    In the $\gamma$-hyperplane absolute game on $[0,1]^d$, Alice has a strategy to ensure that for any $k\geq0$, the following statements hold:
\begin{itemize}
    \item[(i)] There exist $\bfu\in\Sigma(1,m_k)$ and $\bfv\in\Sigma(1,m_k+N)$ such that 
    \begin{equation}\label{balinclucy}
        B_{n_k}\subseteq I_{\bfu}(1,m_k)\times [0,1]^{d-1}\quad\text{and}\quad B_{n_{k}+1}\subseteq I_{\bfv}(1,m_k+N)\times [0,1]^{d-1}.\medskip
    \end{equation}

    \item[(ii)] For any $m_k\leq n\leq m_k+N$, we have $B_{n_k+s}\subseteq \tN(\cT,\cG,\delta,n)$.
    %\medskip

    %\item[(iii)] $0\leq m_{k+1}-m_k\leq N$.
\end{itemize}
\end{claim}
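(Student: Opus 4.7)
The plan is to induct on $k\geq 0$, constructing Alice's moves stage by stage and verifying both (i) and (ii) within the rounds $n_{k-1}+1,\ldots,n_k+s$ (with the convention $n_{-1}:=0$ for the base case). Before starting, I would record two size estimates that follow directly from the definitions. First, the chain rule combined with $|T_{n,1}'|\leq M$ and the definition of $m_k$ yields
\[
M^{-1}\gamma^{-(k+1)s}\;<\;\sup_{B_{n_k,1}\setminus E_{m_k}}\bigl|\sfT_{1,m_k,1}'\bigr|\;\leq\;\gamma^{-(k+1)s},
\]
and then Lemma \ref{bounddis} bounds any level-$m_k$ cylinder meeting $B_{n_k,1}$ by $C_2M\gamma^{(k+1)s}$ in diameter. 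Second, a comparison using \eqref{defofNA} forces $m_k\geq m_{k-1}+N$, so the passage from a level-$(m_{k-1}+N)$ cylinder to a level-$m_k$ cylinder is substantive.

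\emph{Part (i).} By the inductive hypothesis, $B_{n_{k-1}+1}\subseteq I_{\bfv}(1,m_{k-1}+N)\times[0,1]^{d-1}$; the base case uses the smallness $\rho_1<\ell\min\{\gamma^s,M^{-1}\}$ directly, possibly after a preliminary Alice move at step $1$. Under assumption $(\mathrm{A}')$ the number of level-$m_k$ cylinder endpoints lying inside $B_{n_{k-1}+1,1}$ is finite, being controlled by the ratio of $\rho_{n_{k-1}+1}$ to the uniform lower bound on level-$m_k$ cylinder lengths arising from the finiteness of $\{T_{n,1}\}$ together with \eqref{defofNA}. Alice then uses Corollary \ref{corokeystra}(i) in coordinate $j_0=1$ in successive rounds, eliminating one such endpoint per round; because $\rho_n$ strictly decreases and only finitely many boundaries need to be killed, by step $n_k$ Bob's ball is forced to lie inside a single level-$m_k$ cylinder product. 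A further application of Corollary \ref{corokeystra}(i) at step $n_k$ refines this inclusion to level $m_k+N$ at step $n_k+1$, completing (i).

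\emph{Part (ii).} Fix $n\in[m_k,m_k+N]$. Lemma \ref{lemmaconint} applied to the now cylinder-aligned ball $B_{n_k+1}$ places $B_{n_k+1}\cap\tN(\cT,\cG,\delta,n)^c$ inside a vertical slab of thickness $(2\delta+C_1|B_{n_k+1}|)/\inf|\sfT_{1,n,1}'|$ in coordinate $1$. Plugging in $\delta=\gamma\rho_2$, the upper bound $|B_{n_k+1}|\leq\rho_{n_k}<\gamma^{ks}\rho_2$, and the lower bound $\inf|\sfT_{1,n,1}'|\geq C_2^{-1}M^{-1}\gamma^{-(k+1)s}$ (from Lemma \ref{bounddis}, $|T'|\geq 1$, and the definition of $m_k$), this thickness is bounded by $(2\gamma+C_1)C_2M\gamma^{s}|B_{n_k+1}|$ after invoking the lower bound $|B_{n_k+1}|\geq\gamma^{ks+2}\rho_2$ coming from the game rules; the choice \eqref{defofs2} of $s_2$ then forces the thickness to be no larger than $\gamma^{s_1}|B_{n_k+1}|/2$. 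This is exactly the hypothesis of Corollary \ref{corokeystra}(ii) applied to the $N+1$ slabs simultaneously, so the final $s_1+1$ rounds of the $k$-th stage drive $B_{n_k+s}$ off every slab, yielding $B_{n_k+s}\subseteq\tN(\cT,\cG,\delta,n)$ for every $n$ in the range.

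The hard part will be the scheduling in Part (i): as $k$ grows the refinements deepen and the number of cylinder boundaries to be eliminated varies, and one must verify that enough game-rounds always lie between $n_{k-1}+1$ and $n_k$ without colliding with the rounds reserved for Part (ii). This is exactly what the split $s=2+s_1+s_2$ is calibrated for, with $s_1$ handling the multi-interval avoidance of Corollary \ref{corokeystra}(ii), $s_2$ absorbing the distortion loss in \eqref{defofs2}, and the additional two rounds paying for the cylinder-refinement passes in Part (i). Assumption $(\mathrm{A}')$ enters in an essential way both through the uniform two-sided control of level-$m_k$ cylinder diameters and through the finiteness of the branch structure that keeps the endpoint count uniformly bounded in $k$.
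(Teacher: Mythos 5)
Your treatment of part (ii) follows the paper's argument closely and is essentially correct, but part (i) contains a genuine error that derails the whole induction. You assert that ``a comparison using \eqref{defofNA} forces $m_k\geq m_{k-1}+N$.'' The inequality goes the other way: since $\sfT_{m_{k-1}+1,N,1}$ already expands by more than $\gamma^{-s}$ on every level-$N$ cylinder (this is exactly what \eqref{defofNA} buys), the threshold defining $m_k$ is crossed after at most $N$ further iterations, i.e.\ $0\leq m_k-m_{k-1}\leq N$ (this is Lemma \ref{gapmkmk1}). This reversed inequality is not a cosmetic slip: the correct direction is precisely what makes part (i) nearly free in the inductive step, because $B_{n_k}\subseteq B_{n_{k-1}+1}\subseteq I_{\bfv}(1,m_{k-1}+N)\times[0,1]^{d-1}$ and a level-$(m_{k-1}+N)$ cylinder is automatically contained in a level-$m_k$ cylinder when $m_k\leq m_{k-1}+N$. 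Having the inequality backwards, you instead propose to descend from level $m_{k-1}+N$ to level $m_k$ by ``eliminating one endpoint per round'' via Corollary \ref{corokeystra}(i). That strategy cannot be made to work: the number of level-$m_k$ cylinder endpoints inside $B_{n_{k-1}+1,1}$ is of order $\rho_{n_{k-1}+1}$ divided by the level-$m_k$ cylinder length, which can be as large as a constant times $\gamma^{-2s}$, whereas only on the order of $s$ rounds are guaranteed between $n_{k-1}+1$ and $n_k$; moreover those rounds are already reserved for the Corollary \ref{corokeystra}(ii) avoidance in part (ii).

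The second half of your part (i) has the same missing ingredient in disguise. You invoke ``a further application of Corollary \ref{corokeystra}(i)'' to refine from level $m_k$ to level $m_k+N$ in a single move, but Corollary \ref{corokeystra}(i) removes only one hyperplane per round, so this works only if $B_{n_k,1}$ meets at most one endpoint of the level-$(m_k+N)$ cylinders. The paper establishes exactly this by proving $\bigl|\sfT_{1,m_k,1}(B_{n_k,1})\bigr|<\ell$ (splitting into the cases $m_k>1$ and $m_k=1$, and using the definitions of $M$, $m_k$ and $\ell$ together with the Mean Value Theorem): the image is shorter than the minimal nonempty level-$N$ cylinder, hence crosses at most one boundary point of $\{I_{\bfw}(m_k+1,N)\}$, and pulling back shows $B_{n_k,1}$ meets at most one endpoint of $\{I_{\bfv}(1,m_k+N)\}$. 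Without this estimate the single application of Corollary \ref{corokeystra}(i) is not justified. To repair your proof you should replace the ``one endpoint per round'' scheme by Lemma \ref{gapmkmk1} (with the inequality in the correct direction) for the first inclusion of \eqref{balinclucy}, and supply the $\ell$-estimate for the second.
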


\noindent    The claim implies that $\tN(\cT,\cG)$ is $\gamma$-absolutely winning on $[0,1]^d$. To see this, we first bound the gap between $m_k$ and $m_{k+1}$.

\begin{lemma}\label{gapmkmk1}
    Let $k\geq0$ be an integer for which the Claim \ref{finitecla} (i) holds, then we have
    \begin{eqnarray}\label{gapbetmkmk1}
        0\leq m_{k+1}-m_k\leq N.
    \end{eqnarray}
\end{lemma}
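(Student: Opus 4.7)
The plan is to establish the bounds $m_{k+1} \geq m_k$ and $m_{k+1} \leq m_k + N$ separately. The crucial set-up is that $\{n_k\}$ is non-decreasing (since the threshold $\gamma^{ks}\rho_2$ decreases in $k$), which combined with the nestedness of Bob's balls yields $B_{n_{k+1}, 1} \subseteq B_{n_k, 1}$. By Claim \ref{finitecla}(i), there exists $\bfu \in \Sigma(1, m_k)$ with $B_{n_k} \subseteq I_{\bfu}(1, m_k) \times [0,1]^{d-1}$, which is the key containment that allows the bounded distortion estimate \eqref{bdddisnonat} to be invoked on this single cylinder.

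For the lower bound, I would argue by contradiction. Assume $m_{k+1} < m_k$. Then the definition of $m_k$ applied at the index $n = m_{k+1}$ gives that the supremum of $|\sfT_{1, m_{k+1}, 1}'|$ over $B_{n_k, 1} \setminus E_{m_{k+1}}$ is at most $M^{-1}\gamma^{-(k+1)s}$. Restricting the supremum to the smaller set $B_{n_{k+1}, 1}$ preserves this bound, so
\[
\sup_{x \in B_{n_{k+1}, 1} \setminus E_{m_{k+1}}} |\sfT_{1, m_{k+1}, 1}'(x)| \leq M^{-1}\gamma^{-(k+1)s} < M^{-1}\gamma^{-(k+2)s},
\]
where the strict inequality uses $\gamma \in (0, 1/3)$. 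This contradicts the definition of $m_{k+1}$, which requires the above supremum to strictly exceed $M^{-1}\gamma^{-(k+2)s}$.

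For the upper bound, the plan is to show that at the index $n = m_k + N$ the defining condition for $m_{k+1}$ is already met, forcing $m_{k+1} \leq m_k + N$. Fix any $x \in B_{n_{k+1}, 1} \setminus E_{m_k + N}$ (such $x$ exists since $E_{m_k + N}$ consists of at most countably many points while $B_{n_{k+1}, 1}$ has positive length). Then $x$ lies in some sub-cylinder $I_{\bfu\bfw}(1, m_k + N) \subseteq I_{\bfu}(1, m_k)$ with $\bfw \in \Sigma(m_k + 1, N)$, so $\sfT_{1, m_k, 1}(x) \in I_{\bfw}(m_k+1, N)$ and the chain rule gives
\[
|\sfT_{1, m_k + N, 1}'(x)| = |\sfT_{m_k+1, N, 1}'(\sfT_{1, m_k, 1}(x))| \cdot |\sfT_{1, m_k, 1}'(x)|.
\]
The first factor exceeds $C_2 M \gamma^{-s}$ by \eqref{defofNA}. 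For the second factor, the definition of $m_k$ supplies a point $y \in B_{n_k, 1} \setminus E_{m_k}$ with $|\sfT_{1, m_k, 1}'(y)| > M^{-1}\gamma^{-(k+1)s}$; applying the bounded distortion estimate \eqref{bdddisnonat} on $I_{\bfu}(1, m_k)$ (which contains both $y$ and $x$) then yields $|\sfT_{1, m_k, 1}'(x)| > C_2^{-1} M^{-1}\gamma^{-(k+1)s}$. Multiplying gives $|\sfT_{1, m_k + N, 1}'(x)| > \gamma^{-(k+2)s} \geq M^{-1}\gamma^{-(k+2)s}$ (using $M \geq 1$), which completes the argument. The main delicacy is the careful coordination between the two cylinder levels $m_k$ and $m_k + N$: one must verify the nesting $I_{\bfu\bfw}(1, m_k+N) \subseteq I_{\bfu}(1, m_k)$ (so that bounded distortion applies to $x$), observe that $E_{m_k} \subseteq E_{m_k + N}$ (so that excluding $E_{m_k+N}$ automatically excludes $E_{m_k}$), and identify the correct intermediate cylinder $I_{\bfw}(m_k+1, N)$ before invoking \eqref{defofNA}; all of these are immediate from the partition-refinement structure of the cylinder hierarchy.
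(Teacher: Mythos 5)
Your proof is correct and follows essentially the same route as the paper: the upper bound via the chain rule, the bounded distortion estimate \eqref{bdddisnonat} on the cylinder $I_{\bfu}(1,m_k)$ supplied by Claim \ref{finitecla}(i), and the uniform expansion \eqref{defofNA}, concluding with $M\geq 1$. The only difference is cosmetic: for the lower bound the paper simply cites the monotonicity of $\{m_k\}$ as previously "easily verified," whereas you spell out the verification via the nesting $B_{n_{k+1},1}\subseteq B_{n_k,1}$; and where you select a point $x\notin E_{m_k+N}$ inside a sub-cylinder, the paper gets the same containment directly from the second inclusion in \eqref{balinclucy}.
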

\begin{proof}
    Fix an integer $k\geq0$ such that (i) of Claim \ref{finitecla} is true for such $k$. Let $\bfu\in\Sigma(1,m_k)$ and $\bfv\in\Sigma(1,m_k+N)$ satisfy \eqref{balinclucy}. The lower bound in \eqref{gapbetmkmk1} follows from the fact that  the sequence $\{m_{k'}\}_{k'\geq0}$ is increasing. We next analyze the upper bound.   With (i) of Claim \ref{finitecla} in mind, the map $\sfT_{1,m_k,1}$ is differentiable on $B_{n_k,1}$, and thus on $B_{n,1}$ for any $n\geq n_k$. This together with  Lemma \ref{bounddis} and the definition of $m_k$ implies that 
    \[
        \inf_{x\in B_{n_{k+1},1}}\big|\sfT_{1,m_k,1}'(x)\big|\geq C_2^{-1}\sup_{x\in B_{n_k,1}}\big|\sfT_{1,m_k,1}'(x)\big|>(C_2M)^{-1}\gamma^{-(k+1)s}.
    \]
    Then, by the chain rule and the first inequality in \eqref{defofNA}, for any $x\in B_{n_{k+1}}$,
    \begin{eqnarray*}
        &~&\hspace{-8ex}\big|\sfT_{1,m_k+N,1}'(x)\big|\\[4pt]
        &~&=\big|\sfT_{m_k+1,N,1}'\big(\sfT_{1,m_k,1}(x)\big)\big|\cdot\big|\sfT_{1,m_k,1}'(x)\big|\\[4pt]
&~&\hspace{0ex}\geq\ \ \inf\Big\{\big|\sfT_{m_k+1,N,1}'(y)\big|:\bfv\in\Sigma(m_k+1,N),\,y\in I_{\bfv}(m_k+1,N)\Big\}\\[4pt]
&~&\hspace{5ex}\times \inf_{z\in B_{n_{k+1},1}}\big|\sfT_{1,m_k,1}'(z)\big|\\[4pt]
        &~&\hspace{0ex} >\ \  \gamma^{-(k+2)s}.
    \end{eqnarray*}
    Therefore, by the definition of $m_{k+1}$, we have $m_{k+1}\leq m_k+N$.
\end{proof} 

We continue  proving, modulo the Claim \ref{finitecla}, that $\tN(\cT,\cG)$ is $\gamma$-hyperplane absolute winning. Let $\bfx\in\bigcap_{k=1}^{\infty}B_k$. By combining Lemma  \ref{gapmkmk1} with the fact that $m_k\to\infty$ ($k\to\infty$), we have
\[
    [m_0,\infty)=\bigcup_{k=0}^{\infty}\,[m_k,m_k+N].
\]
Then, for any integer $n\geq m_0$, there exists $k=k(n)\in\N$ such that $m_k\leq n\leq m_k+N$. On applying  Claim \ref{finitecla} (ii) to $k=k(n)$, we have $\bfx\in\tN(\cT,\cG,\delta,n)$. Since $n\geq m_0$ is arbitrary, it follows that $\bfx\in \tN(\cT,\cG)$. Therefore, the set  $\tN(\cT,\cG)$ is $\gamma$-hyperplane absolute winning.\medskip

Now, it is left to prove  the Claim \ref{finitecla}. For the closed ball $B_k$ that Bob chooses at the $k$-th step, write
\begin{equation*}
    B_k=B_{k,1}\times B_{k,2}\times\cdots B_{k,d},
\end{equation*}
where $B_{k,i}$ is a closed interval in $\R$ for any $1\leq i\leq d$.
\medskip

\noindent\emph{$\bullet$ Proving Claim \ref{finitecla} for $k=0$}: Recall that $$\ell=\min\left\{|I_{\bfu}(i,N)|:i\in\N,\,\bfu\in\Sigma(i,N),\,I_{\bfu}(i,N)\neq\emptyset\right\}.$$ Since $\rho_1<\ell\cdot\min\{\gamma^s,M^{-1}\}<\ell$, $B_{1,1}$ contains at most one endpoint of the intervals $\{I_{\bfu}(1,N):\bfu\in\Sigma(1,N)\}$. 
By (i) of Corollary \ref{corokeystra}, Alice has a strategy such that $B_{2,1}$ contains no endpoint of the intervals $\{I_{\bfu}(1,N):\bfu\in\Sigma(1,N)\}$.  On the other hand, by the definition of $m_0$ and  \eqref{defofNA}, we have $m_0\leq N$. It follows that $B_{2,1}$ is  contained in $I_{\bfu}(1,m_0)$ for some $\bfu\in\Sigma(1,m_0)$. This establishes the first inclusion of \eqref{balinclucy} for $k=0$. %Now $\sfT_{\cF_1}(1,m_0)$ is  $C^1$ and injective on $B_{2,1}$.
To prove the second inclusion, we first establish the inequality $\big|\sfT_{1,m_0,1}(B_{2,1})\big|<\ell$ by separately considering the following cases. Note that $\sfT_{1,m_0,1}$ is monotonic and $C^{1+\kappa}$ on $B_{2,1}$.
\begin{itemize}
    \item[$\circ$] Suppose that $m_0>1$. This together with Mean Value Theorem and the definition of $m_0$ and $M$ shows that
    \begin{equation}\label{m0ge1}
        \begin{aligned}
     &\hspace{-2ex}\big|\sfT_{1,m_0,1}(B_{2,1})\big|\ \leq\ \sup_{x\in B_{2,1}}\big|\sfT_{1,m_0,1}'(x)\big|\cdot |B_{2,1}|\\[4pt]
&\hspace{12ex}\ \leq\ \sup\big\{|T_{m_0,1}'(y)|:i\in\Sigma_{m_0},\,y\in I_i(m_0)\big\}\\[4pt]
&\hspace{20ex}\times\sup_{x\in B_{2,1}}\big|\sfT_{1,m_0-1,1}'(x)\big|\cdot|B_{2,1}|\\[4pt]
    &\hspace{12ex}\ < \ M\cdot M^{-1}\gamma^{-s}\cdot \ell\cdot\min\{\gamma^s,M^{-1}\}\\[4pt]
    &\hspace{12ex}\ \leq\ \ell\,.
\end{aligned}\vspace*{1ex}
    \end{equation}
    
\item[$\circ$] Suppose that $m_0=1$. Then by Mean Value Theorem,
\begin{equation}\label{m0eq1}
    \begin{aligned}
        \big|\sfT_{1,m_0,1}(B_{2,1})\big|&\ =\ |T_{1,1}(B_{2,1})|\ \leq \ \sup_{x\in\bigcup_{i\in\Sigma_1}I_i(1)}|T_{1,1}'(x)|\cdot|B_{2,1}|\\[4pt]
    &\ < \ M\cdot\ell\cdot\min\{\gamma^s,M^{-1}\} \ \leq \ \ell\,.
    \end{aligned}\vspace*{1ex}
\end{equation}
\end{itemize}
\noindent Since $|\sfT_{1,m_0,1}(B_{2,1})|<\ell$, the image $\sfT_{1,m_0,1}(B_{2,1})$ contains at most one endpoint of the intervals $\{I_{\bfw}(m_0+1,N):\bfw\in\Sigma(m_0+1,N)\}$, which implies that $B_{2,1}$ contains at most one endpoint in the intervals $\{I_{\bfv}(1,m_0+N):\bfv\in\Sigma(1,m_0+N)\}$. Using Corollary \ref{corokeystra} (i) again, Alice has a strategy to ensure that $B_{3,1}$ is contained within $I_{\bfv}(1,m_0+N)$ for some $\bfv\in\Sigma(1,m_0+N)$. This proves Claim \ref{finitecla} (i) for $k=0$.
%   Now, by combining the inequality $\big|\sfT_{1,m_0,1}(B_{2,1})\big|<\ell$, the definition of $\ell$, the fact that $B_{2,1}$ is contained in a cylinder set associated with a word in $\Sigma(1,m_0)$ and  that $\sfT_{1,m_0,1}$ is $C^1$ and injective on $B_{2,1}$, we obtain that
% \begin{eqnarray}\label{b2capendcy}
% \begin{aligned}
%     &~\hspace{-6ex}\#\left(B_{2,1}\ \cap\bigcup_{\bfv\in\Sigma(1,m_0+N)}\partial I_{\bfv}(1,m_0+N)\right)\\[4pt]
%     &~\hspace{2ex}=\ \#\left(\sfT_{1,m_0,1}(B_{2,1})\ \cap\bigcup_{\bfw\in\Sigma(m_0+1,N)}\partial I_{\bfw}(m_0+1,N)\right)
%     \ \leq\  1.
% \end{aligned}
% \end{eqnarray}
% Using Corollary \ref{corokeystra} (i) again, Alice can find $A_2$ such that  $$B_3\cap\big(\partial I_{\bfv}(1,m_0+N)\times[0,1]^{d-1}\big)=\emptyset,\quad\forall\ \bfv\in\Sigma(1,m_0+N)$$ whatever Bob chooses $B_3\subseteq B_2\setminus A_2$. This shows that $B_3\subseteq I_{\bfv}(1,m_0+N)\times[0,1]^{d-1}$ for some $\bfv\in\Sigma(1,m_0+N)$, and thus Claim \ref{finitecla} (i)  is proved  for $k=0$.

Next we prove Claim \ref{finitecla} (ii) for $k=0$.  By the claim (i) which we have proved, for any $m_0\leq n\leq m_0+N$, the interval $B_{3,1}$ is contained in a cylinder set associated with a word in  $\Sigma(1,n)$. Then, on combining the definitions of $m_0$ and $\delta$, Lemma \ref{lemmaconint},  Lemma \ref{bounddis} and \eqref{defofs2}, we obtain that there exist $N+1$ closed intervals $J_1,J_2,...,J_{N+1}\subseteq[0,1]$ of length
\begin{eqnarray*}
    \leq\frac{2\delta+C_1|B_3|}{\inf_{x\in B_{3,1}}\big|\sfT_{1,m_0,1}'(x)\big|}\leq\frac{2\delta+C_1|B_3|}{C_2^{-1}M^{-1}\gamma^{-s}} \leq(2\gamma+C_1)C_2M\rho_2\gamma^{s}<\gamma^{s_1}|B_3|
\end{eqnarray*}
such that
\begin{eqnarray}\label{setx1inji}
     \bigcup_{n=m_0}^{m_0+N}B_3\cap \tN(\cT,\cG,\delta,n)^c\subseteq\bigcup_{i=1}^{N+1}\left\{\bfx\in\R^d:x_1\in J_i\right\}.
\end{eqnarray}
% is contained in 
% \begin{eqnarray}
%     \bigcup_{i=1}^{N+1}\left\{\bfx\in\R^d:x_1\in J_i\right\}.
% \end{eqnarray}
By applying Corollary \ref{corokeystra} (ii) to the intervals $J_1,J_2,...,J_{N+1}$, Alice can guarantee  that the ball $B_{3+s_1}$ does not intersect the set in the right-hand-side of \eqref{setx1inji}.  
% and the set of these centers is 
% \[
%     >\frac{\gamma}{2}|B_{2+s_1}|\geq\frac{\gamma^{s}}{2}\rho_2> 2MC_2\delta\gamma^s.
% \]
Since $B_{3+s_1}\supseteq B_{2+s}=B_{n_0+s}$, this implies that $$B_{2+s}\subseteq \tN(\cT,\cG,\delta,n),\qquad\forall \ m_0\leq n\leq m_0+N.$$  The proof of Claim \ref{finitecla} for $k=0$ is complete.
\medskip

\noindent\emph{$\bullet$ If Claim \ref{finitecla} is true for $k=j$, then it is true for $k=j+1$:} The proof is an adaptation of the method  used above. Suppose that Bob has chosen a closed ball $B_{n_j+s}$ that satisfies Claim \ref{finitecla} (ii). Note that $n_{j+1}\geq n_j+s$ by definition. We  first let Alice play arbitrarily until Bob chooses $B_{n_{j+1}}$. From the inductive hypothesis, $B_{n_{j+1},1}$ is contained in a cylinder set associated with a word in $\Sigma(1,m_j+N)$. Moreover, by combining Lemma \ref{gapmkmk1} with the fact that (i) of Claim \ref{finitecla} is true for $k=j$, we have $m_{j+1}\leq m_j+N$. The upshot of the above is that  $B_{n_{j+1}}$ is contained in $I_{\bfu}(1,m_{j+1})\times[0,1]^{d-1}$ for some $\bfu\in\Sigma(1,m_{j+1})$. This proves the first inclusion in \eqref{balinclucy} for $k=j+1$. 

By the same method used in deriving \eqref{m0ge1} and \eqref{m0eq1}, it is possible to show that $\big|\sfT_{1,m_{j+1},1}(B_{n_{j+1},1})\big|<\ell$. Indeed, observe that
\begin{itemize}
    \item[$\circ$] if $m_{j+1}>1$, then on combining Mean Value Theorem and the definition of $m_{j+1}$, we obtain that
    \begin{equation*}\label{mj1ge1}
        \begin{aligned}
     &\hspace{4ex}\big|\sfT_{1,m_{j+1},1}(B_{n_{j+1},1})\big|\ \leq\ \sup_{x\in B_{n_{j+1},1}}\big|\sfT_{1,m_{j+1},1}'(x)\big|\cdot |B_{n_{j+1},1}|\\[4pt]
&\hspace{29ex} \leq\ \sup\big\{|T_{m_{j+1},1}'(y)|:i\in\Sigma_{m_{j+1}},\,y\in I_i(m_{j+1})\big\}\\[4pt]
&\hspace{34ex}\times\sup_{x\in B_{n_{j+1},1}}\big|\sfT_{1,m_{j+1}-1,1}'(x)\big|\cdot|B_{n_{j+1},1}|\\[4pt]
    &\hspace{29ex} < \ M\cdot M^{-1}\gamma^{-(j+2)s}\cdot \gamma^{(j+1)s}\rho_2 
    \\[4pt]
    &\hspace{29ex} \leq\ \ell.
\end{aligned}\vspace*{1ex}
    \end{equation*}
    
\item[$\circ$] if $m_{j+1}=1$, then by Mean Value Theorem,
\begin{equation*}\label{mj1eq1}
    \begin{aligned}
       \hspace{2ex} \big|\sfT_{1,m_{j+1},1}(B_{n_{j+1},1})\big|&\ =\ |T_{1,1}(B_{n_{j+1},1})|\ \leq \ \sup_{x\in\bigcup_{i\in\Sigma_1}I_i(1)}|T_{1,1}'(x)|\cdot|B_{2,1}|\\[4pt]
    &\ < \ M\cdot\ell\cdot\min\{\gamma^s,M^{-1}\} \ \leq \ \ell\,.
    \end{aligned}\vspace*{1ex}
\end{equation*}
\end{itemize}
 Since $|\sfT_{1,m_{j+1},1}(B_{n_{j+1},1})|<\ell$, the interval $B_{n_{j+1},1}$ intersects at most one endpoint of the intervals $\{I_{\bfv}(1,m_{j+1}+N):\bfv\in\Sigma(1,m_{j+1}+N)\}$.
% \[
%     \#\left(B_{n_{j+1},1}\ \cap\bigcup_{\bfv\in\Sigma(1,m_{j+1}+N)}\partial I_{\bfv}(1,m_{j+1}+N)\right)\leq1.
% \]
On combining this with Corollary \ref{corokeystra} (i),  Alice has a strategy to guarantee that 
 $B_{n_{j+1}+1}\subseteq I_{\bfv}(1,m_{j+1}+N)\times[0,1]^{d-1}$ for some $\bfv\in\Sigma(1,m_{j+1}+N)$.   This proves Claim \ref{finitecla} (i) for $k=j+1$. 

Next, we prove Claim \ref{finitecla} (ii) for $k=j+1$. Note that by  Claim \ref{finitecla} (i) for $k=j+1$ that we have just proved,  we conclude that for any $m_{j+1}\leq n\leq m_{j+1}+N$,  the interval $B_{n_{j+1}+1,1}$ is a sub-interval of $I_{\bfv}(1,m_{j+1}+N)$.  Together with the definition of $m_{j+1}$, Lemma \ref{lemmaconint}, Lemma \ref{bounddis} and \eqref{defofs2}, this implies the existence of $N+1$ closed intervals $J_1,J_2,...,J_{N+1}\subseteq[0,1]$ of length
\begin{eqnarray*}
    &\leq& \frac{2\delta+C_1|B_{n_{j+1}+1,1}|}{\inf_{x\in B_{n_{j+1}+1},1}\big|\sfT_{1,m_{j+1},1}'(x)\big|}
    \ \leq\ \frac{2\delta+C_1|B_{n_{j+1}+1,1}|}{C_2^{-1}M^{-1}\gamma^{-(j+2)s}}\\[4pt] 
    &\leq&(2\gamma+C_1)C_2M\rho_2\gamma^{(j+2)s}\ \leq\ \gamma^{s_1}|B_{n_{j+1}+1}|.
\end{eqnarray*}
such that
\begin{eqnarray}\label{nj1x1inji}
    \bigcup_{n=m_{j+1}}^{m_{j+1}+N}B_{n_{j+1}+1}\cap \tN(\cT,\cG,\delta,n)^c\subseteq\bigcup_{i=1}^{N+1}\left\{\bfx\in\R^d:x_1\in J_i\right\}.
\end{eqnarray}
 By applying Corollary \ref{corokeystra} (ii) to the intervals $J_1,J_2,...,J_{N+1}$, Alice has a strategy to ensure that the closed ball $B_{n_{j+1}+1+s_1}$ does not intersect the set in the right-hand-side of \eqref{nj1x1inji}. This shows  that $$B_{n_{j+1}+s}\subseteq \tN(\cT,\cG,\delta,n),\quad\forall\ m_{j+1}\leq n\leq m_{j+1}+N.$$  Now, we have completed the proof of Claim \ref{finitecla} for $k=j+1$.\medskip

The above discussions verify that Claim \ref{finitecla} is true. As mentioned earlier, the claim implies  that the set $\tN(\cT,\cG)$ is $\gamma$-hyperplane absolute winning. By the arbitrariness of $\gamma\in(0,1/3)$,  the set is hyperplane absolute winning.

\subsection{Proof of Theorem \ref{onedimnonaut}: assuming ($\mathrm{B}'$)} 

Let $\gamma\in(0,1/3)$. Before proving that $\tN(\cT,\cG)$ is $\gamma$-hyperplane absolute winning, we first fix some notations as follows:

\begin{itemize}
    \item $\epsilon\in(0,1)$ is the constant as in Lemma \ref{keystr} associated with $\gamma$-hyperplane absolute game.
\medskip

 \item $C_1>0$ is the constant  in \eqref{conslipgn} associated with $\cG$.
    \medskip

    \item Let $C_2>1$ denote the constant as in \eqref{bdddisnonat} and let $C_3=C_2^{\,3}$.
    \medskip
    
    \item $N\in\N$ satisfies that
    \begin{eqnarray}\label{defofN1}
    \begin{aligned}
       \inf\Big\{\big|\sfT_{i,N,1}'(x)\big|:i\in\N,\, x\in \bigcup_{\bfu\in\Sigma(i,N)}I_{\bfu}(i,N)\Big\}>C_3\cdot\gamma^{-s}.
    \end{aligned}
    \end{eqnarray}
    Here $s:=s_1+s_2$, where $s_1:=\linte{\log_{1/\epsilon}(2N)}+1$ and $s_2$ satisfies that
    \begin{equation}\label{defofs2AB}
        \gamma^{2s_2-1}<(2C_1C_2\gamma^{s_1})^{-1}.
    \end{equation}
  Such integer exists because of  Definition \ref{defofpiee} (iii).
\medskip

\item For any integer $k\geq1$, let $\rho_k$ be the diameter of $B_k$ that Bob chooses at $k$-th step.\medskip

\item $n_0:=1$ and $n_k:=\inf\left\{n\in\N:\rho_{n}<\gamma^{ks}\rho_1\right\}$ for any integer $k\geq1$.
\medskip

\item $\delta:=\frac{\rho_1}{2}\left(\frac{1}{2C_2\gamma^{s_2}-1}-C_1\gamma^s\right)$. By \eqref{defofs2AB}, we have $\delta>0$.
\medskip

\item For any integer $k\geq1$, denote
\[
    \cI_k:=\bigcup_{N=1}^{\infty}\left\{\bfu\in\Sigma(1,N):  \gamma^{ks}< |I_{\bfu}(1,N)|\leq\gamma^{(k-1)s}\right\}.\medskip
\]
\end{itemize}

%With the above notations in mind, we next show that Alice has a strategy  to ensure that the set $\bigcap_{k\geq1}B_k$ intersects $N(T,\cG,\delta)$, where the set $N(T,\cG,\delta)$ is defined as in \eqref{defofntgd}.
%If it is proved, then by the inclusion $N(T,\cG,\delta)\subseteq N(T,\cG)$, the set $N(T,\cG)$ is $\gamma$-absolute winning, which is as desired.
%\medskip

Without loss of generality,  we assume that $\rho_k\to 0$ $(k\to\infty)$ and $\rho_1<\gamma^{2s}$. Then $n_k<+\infty$ for any $k\geq1$ and $n_k\to\infty$ as $k\to\infty$. Under these assumptions,  we make the following claim:
\begin{claim}\label{claimforfull}
  In the $\gamma$-hyperplane absolute game on $[0,1]^d$,  Alice has a strategy to ensure that
    \begin{eqnarray}\label{bnksntg}
        B_{n_k+s}\cap \tN(\cT,\cG,\delta,|\bfu|)^c\cap \big(I_{\bfu}(1,|\bfu|)\times[0,1]^{d-1}\big)=\emptyset,\quad\forall\ \bfu\in\cI_{k+2}
    \end{eqnarray}
    for any integer $k\geq0$. Here, $|\bfu|$ denotes the length of the word $\bfu$.
\end{claim}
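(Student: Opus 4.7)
\textbf{Proof plan for Claim \ref{claimforfull}.} The plan is to specify Alice's strategy in disjoint blocks, one per index $k$. Because $s_2\ge1$ and $s=s_1+s_2$, the lower bound $\rho_{n_k+s_1}\ge\gamma^{s_1+1}\rho_{n_k-1}\ge\gamma^{ks+s_1+1}\rho_1>\gamma^{(k+1)s}\rho_1$ forces $n_k+s_1<n_{k+1}$, so the windows $[n_k,n_k+s_1-1]$ for successive $k$ are disjoint. On each such window Alice will run a sub-strategy (depending on $B_{n_k}$) designed to make $B_{n_k+s_1}$—and hence $B_{n_k+s}\subseteq B_{n_k+s_1}$—disjoint from every set $\{x_1\in J_\bfu\}$, where $\bfu$ ranges over words in $\cI_{k+2}$ with $I_\bfu\cap B_{n_k,1}\neq\emptyset$ and $J_\bfu$ is the one-dimensional strip containing the first coordinate of every point of $B_{n_k}\cap\tN(\cT,\cG,\delta,|\bfu|)^c\cap(I_\bfu\times[0,1]^{d-1})$; on all other steps Alice plays arbitrarily.

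The first ingredient is a uniform bound on the number of relevant cylinders. Since $\bfu\in\cI_{k+2}$ has $|I_\bfu|>\gamma^{(k+2)s}$ while $|B_{n_k,1}|\le\rho_{n_k}<\gamma^{ks}\rho_1<\gamma^{(k+2)s}$ (using $\rho_1<\gamma^{2s}$), at most two pairwise disjoint cylinders of any fixed length $n$ from $\cI_{k+2}$ meet $B_{n_k,1}$. Moreover, if $\bfu\subsetneq\bfv$ are both in $\cI_{k+2}$ and $|\bfv|-|\bfu|\ge N$, then the chain rule together with $|T_n'|\ge1$ and \eqref{defofN1} gives $|\sfT_{|\bfu|+1,|\bfv|-|\bfu|,1}'(\sfT_{1,|\bfu|,1}(x))|>C_3\gamma^{-s}$; combined with the lower bound $|\sfT_{1,|\bfu|,1}'(x)|\ge C_2^{-1}/|I_\bfu|$ from Lemma \ref{bdlem} and the upper bound $|I_\bfv|\le C_2/|\sfT_{1,|\bfv|,1}'(x)|$, this yields $|I_\bfv|\le C_2^{-1}\gamma^{(k+2)s}<\gamma^{(k+2)s}$, contradicting $\bfv\in\cI_{k+2}$. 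Therefore the lengths of relevant cylinders through any given point span fewer than $N$ values, and the total count is at most $2N$.

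The second ingredient is a uniform strip bound. Applied (or its straightforward adaptation) to the box $B_{n_k}\cap(I_\bfu\times[0,1]^{d-1})$, Lemma \ref{lemmaconint} together with the estimate $\inf_{I_\bfu}|\sfT_{1,|\bfu|,1}'|\ge C_2^{-1}\gamma^{-(k+1)s}$ from Lemma \ref{bdlem} delivers $|J_\bfu|\le C_2(2\delta+C_1\rho_{n_k})\gamma^{(k+1)s}$. Invoking $\rho_{n_k}\in[\gamma^{ks+1}\rho_1,\gamma^{ks}\rho_1)$ and the explicit choices of $\delta$ and $s_2$ (in particular \eqref{defofs2AB}), an algebraic rearrangement gives $|J_\bfu|\le\gamma^{s_1}\rho_{n_k}/2$.

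With at most $2N$ intervals, each of length at most $\gamma^{s_1}|B_{n_k}|/2$, and $s_1=\lfloor\log_{1/\epsilon}(2N)\rfloor+1$ matching the hypothesis of Corollary \ref{corokeystra}(ii) exactly, Alice's sub-strategy on the $s_1$ moves starting at $n_k$ forces $B_{n_k+s_1}$ to miss every $\{x_1\in J_\bfu\}$, which establishes \eqref{bnksntg}. I expect the main obstacle to be the nested-cylinder counting step: correctly using \eqref{defofN1} through Lemma \ref{bdlem} to limit how many levels of $\cI_{k+2}$ can simultaneously contain a single point is the substantive part; once the bound $2N$ is available, both the strip estimate and the invocation of Corollary \ref{corokeystra}(ii) reduce to bookkeeping against the parameters $\delta$, $s_1$, $s_2$, chosen precisely to make the computation close.
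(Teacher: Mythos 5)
Your proposal is correct and follows essentially the same route as the paper's proof: bound the number of relevant cylinders in $\cI_{k+2}$ meeting $B_{n_k}$ by $2N$ (your nested-cylinder computation is exactly the content and proof of the paper's Lemma \ref{lenofik}), convert each bad cylinder into a first-coordinate strip of width at most $\gamma^{s_1}|B_{n_k}|/2$ via Lemma \ref{lemmaconint} and \eqref{diamofiu}, and then invoke Corollary \ref{corokeystra}(ii) over the $s_1$ moves starting at $n_k$. The only cosmetic difference is that you organize the strategy into disjoint blocks $[n_k,n_k+s_1)$ rather than phrasing it as an induction on $k$, which is equivalent here.
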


As demonstrated below, the claim implies that $\tN(\cT,\cG)$ is hyperplane absolute winning on $[0,1]^d$.  We first show that, under the claim, the set 
 \begin{equation}\label{ncfcgcupe}
     \tN(\cT,\cG)\cup (E\times[0,1]^{d-1})
 \end{equation}
 is $\gamma$-hyperplane absolute winning on $[0,1]^d$, where $$E:=\bigcup_{N=1}^{\infty}\bigcup_{\bfu\in\Sigma(1,N)}\partial I_{\bfu}(1,N).$$ In fact, let $\bfx\in \bigcap_{k=1}^{\infty}B_k$ and suppose that $x_1\notin E$. Then, for any sufficiently large $n\in\N$, there exists $k\in\N$ and $\bfu\in\cI_{k+2}$ such that $|\bfu|=n$ and $x_1\in I_{\bfu}(1,n)$.   It follows from  \eqref{bnksntg}  that $\bfx\in\tN(\cT,\cG,\delta,n)$. Since $n$ is arbitrary, we have $\bfx\in \tN(\cT,\cG)$. The upshot of the above is that the set \eqref{ncfcgcupe}  is $\gamma$-hyperplane absolute winning on $[0,1]^d$. By the arbitrariness of $\gamma\in(0,1/3)$, the set  is hyperplane absolute winning on $[0,1]^d$. Moreover, note that $E$ is a countable set and $[0,1]^d\setminus\big(\{y\}\times[0,1]^{d-1}\big)$ is hyperplane absolute winning on $[0,1]^d$ for any $y\in[0,1]$, it follows from Proposition \ref{proofhypdi} (iii) that the set $\tN(\cT,\cG)$ is hyperplane absolute winning on $[0,1]^d$.\medskip

To prepare for proving the claim, we start with the following result on the lengths of words in $\cI_k$.

\begin{lemma}\label{lenofik}
Let $N\in\N$ be defined as above, then for every integer $k\geq1$ and every pair of $\bfu,\bfv\in\cI_k$ with $I_{\bfv}(1,|\bfv|)\subseteq I_{\bfu}(1,|\bfu|)$, we have $0\leq|\bfv|-|\bfu|<N$.
\end{lemma}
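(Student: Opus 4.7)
The plan is to prove the two bounds separately, with the upper bound being the substantive part. The main tools are the bounded-distortion estimates for $\cT_1$ collected in Section~\ref{UPCEM} together with the uniform expansion over $N$ steps guaranteed by~\eqref{defofN1}.

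For the lower bound $|\bfv|-|\bfu|\ge 0$, I would exploit the Markov partition structure. Under assumption~$(\mathrm{B}')$, each $\sfT_{1,n,1}$ maps every cylinder of level $n$ bijectively onto $(0,1)$, so the cylinders refine as $n$ grows. Hence two cylinders at arbitrary levels are either disjoint or nested, and a non-empty inclusion $I_{\bfv}\subseteq I_{\bfu}$ forces $\bfu$ to be a prefix of $\bfv$, giving $|\bfv|\ge|\bfu|$.

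For the upper bound $|\bfv|-|\bfu|<N$, I would argue by contradiction. Assume $|\bfv|\ge|\bfu|+N$ and write $\bfv=\bfu\bfw$ with $|\bfw|\ge N$. Fix any $x\in I_{\bfv}(1,|\bfv|)\subseteq I_{\bfu}(1,|\bfu|)$. The chain rule factors $(\sfT_{1,|\bfv|,1})'(x)$ as the product of $(\sfT_{|\bfu|+1,|\bfw|,1})'(\sfT_{1,|\bfu|,1}(x))$ and $(\sfT_{1,|\bfu|,1})'(x)$. Since $|T_{n,1}'|\ge 1$ on each branch, one may drop the last $|\bfw|-N$ factors in the first term and invoke~\eqref{defofN1} on the remaining $N$ to conclude
\[
\big|(\sfT_{1,|\bfv|,1})'(x)\big|>C_3\gamma^{-s}\cdot\big|(\sfT_{1,|\bfu|,1})'(x)\big|.
\]
Converting derivatives to diameters via both sides of the distortion bound~\eqref{diamofiu} (the upper estimate applied to $I_{\bfv}$, the lower one to $I_{\bfu}$) then yields
\[
|I_{\bfv}(1,|\bfv|)|<\frac{C_2^2\gamma^s}{C_3}\,|I_{\bfu}(1,|\bfu|)|=\frac{\gamma^s}{C_2}\,|I_{\bfu}(1,|\bfu|)|,
\]
and combining with $|I_{\bfu}(1,|\bfu|)|\le\gamma^{(k-1)s}$ (from $\bfu\in\cI_k$) gives $|I_{\bfv}(1,|\bfv|)|<\gamma^{ks}/C_2<\gamma^{ks}$, contradicting $\bfv\in\cI_k$.

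The main delicacy is constant bookkeeping: the two invocations of~\eqref{diamofiu} each introduce a factor of $C_2$, so the contradiction hinges on the specific choice $C_3=C_2^{3}$ fixed at the start of the subsection, which is precisely what lets the threshold $C_3\gamma^{-s}$ in~\eqref{defofN1} absorb the $C_2^2$ and leave a clean $1/C_2$ margin in the final estimate.
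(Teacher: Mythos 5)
Your proof is correct and follows essentially the same route as the paper: the upper bound rests on the uniform $N$-step expansion \eqref{defofN1} combined with the bounded-distortion estimates of Lemma \ref{bdlem}, and the lower bound on the nesting of cylinders. The only cosmetic difference is that the paper applies the diameter-multiplicativity \eqref{diaeofiuiv} directly to $I_{\bfu\bfw}$ with $|\bfw|=N$, whereas you run the equivalent computation through derivatives and \eqref{diamofiu} and phrase it as a contradiction; your constant bookkeeping checks out.
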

\begin{proof}
 Note that for every $k\in\N$, $\bfu\in\cI_k$ and $\bfw\in\Sigma(|\bfu|+1,N)$, we have
\begin{eqnarray*}
    |I_{\bfu\bfw}(1,|\bfu|+N)|&\leq &C_3\,|I_{\bfu}(1,|\bfu|)|\cdot |I_{\bfw}(|\bfu|+1,N)|\quad(\text{by \eqref{diaeofiuiv}})\\
    &\leq&C_3\,\gamma^{(k-1)s}\cdot\frac{1}{\inf\{|\sfT_{i,N,1}'(x)|:i\in\N,x\in\bigcup_{\bfw'\in\Sigma(i,N)}I_{\bfw'}(i,N)\}}\\
    &<&\gamma^{ks}\quad(\text{by the first inequality in \eqref{defofN1}})\,.
\end{eqnarray*}
On the other hand, it is easily verified that if $I_{\bfv}(1,|\bfv|)\subseteq I_{\bfu}(1,|\bfu|)$, then $|\bfv|-|\bfu|\geq0$ and $\bfv=\bfu\bfw$ for some $\bfw\in\Sigma(|\bfu|+1,|\bfv|-|\bfu|)$. By combining the above information, we obtain the desired inequality $0\leq|\bfv|-|\bfu|<N$. The proof is complete.
\end{proof}

Now we are going to prove Claim \ref{claimforfull} by induction.
\medskip 

\noindent\emph{$\bullet$ Claim \ref{claimforfull} is true for $k=0$}: By Lemma \ref{lenofik} and the assumption that $\rho_1<\gamma^{2s}$, we have
\[
    \#\left\{\bfu\in\cI_2:I_{\bfu}(1,|\bfu|)\cap B_1\neq\emptyset\right\}\leq 2N.
\]
This together with the definition of $\delta$, Lemma \ref{lemmaconint}, \eqref{diamofiu} and \eqref{defofs2AB} implies that there exist $2N$ intervals $J_1,J_2,...,J_{2N}\subseteq[0,1]$ of length
\[
    \leq(2\delta+C_1\rho_1)C_2\gamma^{s}\leq\frac{\gamma^{s_1+1}}{2}\rho_1<\frac{\gamma^{s_1}}{2}|B_1|
\]
such that 
\begin{eqnarray}\label{uini2b1capntg}
    \bigcup_{\bfu\in\cI_2}B_1\cap \tN(\cT,\cG,\delta,|\bfu|)^c\cap \big(I_{\bfu}(1,|
    \bfu|)\times[0,1]^{d-1}\big)\ \subseteq \ \bigcup_{i=1}^{2N}\left\{\bfx\in\R^d:x_1\in J_i\right\}.
\end{eqnarray}
On the other hand,  on applying Corollary \ref{corokeystra} (ii) to the  intervals $J_1,J_2,...,J_{2N}$, Alice has a strategy to ensure that 
\[
    B_{1+s_1}\cap\bigcup_{i=1}^{2N}\left\{\bfx\in\R^d:x_1\in J_i\right\}=\emptyset
\]
whatever Bob chooses $B_{1+s_1}$ at the $(1+s_1)$-th step. By \eqref{uini2b1capntg}, this implies that the ball $B_{1+s}\subseteq B_{1+s_1}$ is disjoint with $\tN(\cT,\cG,\delta,|\bfu|)^c\cap\big(I_{\bfu}(1,|\bfu|)\times[0,1]^{d-1}\big)$ for any $\bfu\in\cI_2$. This proves Claim \ref{claimforfull} for $k=0$.
\medskip

\noindent\emph{$\bullet$ If Claim \ref{claimforfull} is true for  $k=j$, then it is true for $k=j+1$}: The method of this step is analogous to that used in the case $k=0$. From inductive hypothesis, Bob has chosen $B_{n_j+s}$ that satisfies \eqref{bnksntg} with $k=j$.  To begin with, let Alice play arbitrarily until Bob selects $B_{n_{j+1}}$ (note that $n_{j+1}\geq n_{j}+s$ by definition). Then, on combining  the definition of $n_{j+1}$, the assumption $\rho_1<\gamma^{2s}$ with Lemma \ref{lenofik}, we have
\[
 \#\left\{\bfu\in\cI_{j+3}:I_{\bfu}(1,|\bfu|)\cap B_{n_{j+1}}\neq\emptyset\right\}\leq 2N.
\]
With this in mind, by the definition of $\delta$, Lemma \ref{lemmaconint}, \eqref{diamofiu} and \eqref{defofs2AB}, we obtain that there exist $2N$ intervals $J_1,J_2,...,J_{2N}\subseteq[0,1]$ with length
\[
    \leq(2\delta+C_1\gamma^{(j+1)s}\rho_1)\cdot C_2\gamma^{(j+2)s}\leq\frac{1}{2}\gamma^{(j+1)s+s_1+1}\rho_1<\frac{\gamma^{s_1}}{2}|B_{n_{j+1}}|
\]
such that 
\begin{eqnarray}\label{bnj1ind}
    \bigcup_{\bfu\in\cI_{j+3}}B_{n_{j+1}}\cap \tN(\cT,\cG,\delta,|\bfu|)^c\cap I_{\bfu}\subseteq\bigcup_{i=1}^{2N}\left\{\bfx\in\R^d:x_1\in J_i\right\}.
\end{eqnarray} 
Now, by applying Corollary \ref{corokeystra} (ii) to $J_1,...,J_{2N}$, Alice can find a strategy such that $B_{n_{j+1}+s_1}$ is disjoint with the set
\[
    \bigcup_{i=1}^{2N}\left\{\bfx\in\R^d:x_1\in J_i\right\}
\]
 whatever Bob chooses $B_{n_{j+1}+s_1}$ at the $(n_{j+1}+s_1)$-th step. On combining this and \eqref{bnj1ind}, we have
  \begin{eqnarray*}
        B_{n_{j+1}+s}\cap \tN(\cT,\cG,\delta,|\bfu|)^c\cap \big(I_{\bfu}(1,|\bfu|)\times[0,1]^{d-1}\big)=\emptyset,\quad\forall\ \bfu\in\cI_{j+3}.
    \end{eqnarray*}
 Therefore,   Claim \ref{claimforfull} is true for $k=j+1$.
\medskip

 The above proved Claim \ref{claimforfull}. As mentioned below the claim, this implies that the set $\tN(\cT,\cG)$ is hyperplane absolute winning on $[0,1]^d$, which completes the proof .

\bigskip

\noindent{\it Acknowledgments}: The author BL was supported by National Key R\&D Program of China (No. 2024YFA1013700), NSFC 12271176 and Guangdong Natural Science Foundation 2024A1515010946. The author Na Yuan was supported by Guangdong University
Young Innovative Talents Program Project (No. 2025KQNCX059).

\bibliographystyle{plain}
	\bibliography{Main.bib}
\end{document}